 \newcounter{ctr}
 \theoremstyle{plain}
 \newtheorem{theorem}{Theorem}[section]
 \newtheorem*{lemma*}{Lemma}
 \newtheorem{lemma}[theorem]{Lemma}
 \newtheorem{corollary}[theorem]{Corollary}
 \newtheorem{proposition}[theorem]{Proposition}
 \newtheorem{conjecture}[theorem]{Conjecture}
 \theoremstyle{definition}
 \newtheorem{remark}[theorem]{Remark}
 \newtheorem{example}[theorem]{Example}
 \newcommand{\CC}{\ensuremath{\mathbb{C}}}
 \newcommand{\G}{\ensuremath{\mathscr{G}}}
 \renewcommand{\H}{\ensuremath{\mathscr{H}}}
 \renewcommand{\hom}{\text{\rm Hom}}
 \newcommand{\id}{\text{\rm Id}}
 \newcommand{\im}{\text{\rm im\,}}
 \newcommand{\ind}{\text{\rm Ind}}
 \renewcommand{\L}{\ensuremath{\mathscr{L}}}
 \newcommand{\Mod}{\ensuremath{\mathbf{Mod}}}
 \newcommand{\ZZ}{\ensuremath{\mathbb{Z}}}
\newcommand{\leftexp}[2]{{\vphantom{#2}}^{#1}{#2}}
\newcommand{\leftsub}[2]{{\vphantom{#2}}_{#1}{#2}}
\newcommand{\lj}[2]{{#1}_{#2}}
\newcommand{\lJ}[2]{{#1}^{#2}}
\newcommand{\rj}[2]{\leftsub{#2}{#1}}
\newcommand{\rJ}[2]{\leftexp{#2}{#1}}
\newcommand{\Res}{\text{\rm Res}}
\renewcommand{\S}{\ensuremath{\mathcal{S}}}
\newcommand{\tsr}{\ensuremath{\otimes}}
\newcommand{\C}{\ensuremath{C'}} 
\newcommand{\tB}{\ensuremath{{\tilde{C'}}\negmedspace}}
\newcommand{\tP}{\ensuremath{\tilde{P}}}
\newcommand{\tE}{\ensuremath{\widetilde{E}}}
\newcommand{\tF}{\ensuremath{\widetilde{F}}}
\newcommand{\tT}{\ensuremath{\widetilde{T}}}
\newcommand{\hE}{\ensuremath{\widehat{E}}}
\newcommand{\hF}{\ensuremath{\widehat{F}}}
\newcommand{\bt}{\small \ensuremath{\boxtimes}}
\newcommand{\br}[1]{\ensuremath{\overline{#1}}}
\renewcommand{\u}{\ensuremath{u}}  
\newcommand{\ui}{\ensuremath{u^{-1}}} 
\newcommand{\T}{\ensuremath{\tilde{T}}}
\newcommand{\Cell}{\ensuremath{\mathfrak{C}}}
\newcommand{\Tab}{\ensuremath{\mathcal{T}}}
\newcommand{\ic}{\ensuremath{\mathcal{IC}}}
\newcommand{\eW}{\ensuremath{{W_e}}}
\newcommand{\pW}{\ensuremath{{W_e^+}}}
\newcommand{\eH}{\ensuremath{\widehat{\H}}} 
\newcommand{\pH}{\ensuremath{\widehat{\H}^{+}}} 
\newcommand{\pY}{\ensuremath{Y_{\geq 0}}}
\newcommand{\eS}{\widehat{\S}}
\newcommand{\aW}{\ensuremath{{W_a}}}
\newcommand{\jdt}{\text{\rm jdt}}
\newcommand{\sh}{\text{\rm sh}}
\newcommand{\evac}{\text{\rm evac}}
\newcommand{\tsym}{\ensuremath{tS^{2}V}}
\newcommand{\tsymred}{\ensuremath{Z^2}}
\newcommand{\tsymredaff}{\ensuremath{\widehat{tS^{2}}_{\text{red}}V}}
\newcommand{\symred}{\ensuremath{S^2_{\text{red}}V}}
\newcommand{\be}{\begin{equation}}
\newcommand{\ee}{\end{equation}}
\renewcommand{\ng}{\text{-}}
\newcommand{\mone}{\ensuremath{\text{\ng 1}}}
\newcommand{\mfour}{\ensuremath{\text{\ng 4}}}
\newcommand{\unit}{\alpha}
\newcommand{\counit}{\beta}
\newcommand{\JW}{\ensuremath{\leftexp{J}{W}_{2}}}
\newcommand{\WJW}{\ensuremath{W_{1}\stackrel{J}{\times} {W}_{2}}}
\newcommand{\HJH}{\ensuremath{\H_1\tsr_J \H_{2}}}
\newcommand{\WJJW}{\ensuremath{W_{1}\stackrel{J_1}{\times} \ldots\stackrel{J_{d-1}}{\times}W_{d}}}
\newcommand{\HJJH}{\ensuremath{\H_{1}\tsr_{J_1}\ldots\tsr_{J_{d-1}}\H_{d}}}
\begin{document}
\author{Jonah Blasiak}
\title{$W$-graph versions of tensoring with the $\S_n$ defining representation}

\begin{abstract}
We further develop the theory of inducing $W$-graphs worked out by Howlett and Yin in \cite{HY1}, \cite{HY2}, focusing on the case $W = \S_n$.  Our main application is to give two $W$-graph versions of tensoring with the $\S_n$ defining representation $V$, one being $\H \tsr_{\H_J} -$ for $\H, \H_J$ the Hecke algebras of $\S_n, \S_{n-1}$ and the other $(\pH \tsr_{\H} -)_1$, where $\pH$ is a subalgebra of the extended affine Hecke algebra and the subscript signifies taking the degree 1 part.  We look at the corresponding $W$-graph versions of the projection $V \tsr V \tsr - \to S^2 V \tsr -$. This does not send canonical basis elements to canonical basis elements, but we show that it approximates doing so as the Hecke algebra parameter $\u \to 0$.  We make this approximation combinatorially explicit by determining it on cells. Also of interest is a combinatorial conjecture stating the restriction of $\H$ to $\H_J$ is ``weakly multiplicity-free'' for $|J| = n-1$, and a partial determination of the map $\H \tsr_{\H_J} \H \xrightarrow{\counit} \H$ on canonical basis elements, where $\counit$ is the counit of adjunction.
\end{abstract}

\maketitle

\section{Introduction}
The polynomial ring $R := \CC[x_1,\ldots,x_n]$ is well understood as a $\CC\S_n$-module, but how this $\CC\S_n$-module structure is compatible with the structure of $R$ as a module over itself is not.
This work came about from an attempt to construct a combinatorial model for $R$ as a $\CC\S_n$-submodule that takes into account multiplication by the $x_i$.  The hope is that such a model would lead to a better understanding of the Garsia-Procesi modules, particularly, the combinatorics of cyclage and catabolism.  We also might hope to find modules corresponding to the $k$-atoms of Lascoux, Lapointe, and Morse and uncover combinatorics that governs them.

Such a model might look something like this: decompose the tensor algebra $TV$ into canonically chosen
irreducible $\CC \S_n$-submodules,  where $V$ is the  degree 1 part of $R$.
 Define a poset in which an irreducible $E'$ is less than an irreducible $E$ if $E' \subseteq V\tsr E$.  Somehow project this picture onto a canonical decomposition of $R$ into $\CC\S_n$-irreducibles.  Lower order ideals of the projected poset would correspond to $\CC\S_n$-modules that are also $R$-modules.  Edges would be controlled by a local rule saying that a path of length two $(E,E'), (E',E'')$ must satisfy $E'' \subseteq S^2V \tsr E$.

The main results of this paper are a first step towards this approach; further work will appear in \cite{B}.  To obtain a nice decomposition of $TV$ and $R$ into irreducibles, we replace $\CC \S_n$ with the Hecke algebra $\H$ of $W = \S_n$ and apply the theory of canonical bases. The functor $V \tsr -$ is replaced by $\H \tsr_{\H_J} -$, $J = \{ s_2, \ldots, s_{n-1} \} \subseteq S$, $S$ the simple reflections of $W$. We are naturally led to a construction that takes an $\H_J$-module $E$ coming from a $W_J$-graph and produces a $W$-graph structure on $\H \tsr_{\H_J} E$. This construction of inducing $W$-graphs, found independently by the author, is due to Howlett and Yin \cite{HY1}. We spend a good deal of this paper (\textsection\ref{s Hecke algebra} -- \textsection\ref{s tableau combinatorics}) developing this theory, proving some basic results of interest for their own sake as well as for this application.

Once this groundwork is laid, we can form a $W$-graph version of $TV \tsr E$, $TV$ being the tensor algebra of $V$, for any $\H$-module $E$ coming from a $W$-graph. We can then try to project this onto a $W$-graph version of $SV \tsr E = R \tsr E$.  This is even interesting for $T^2 V$ and $S^2 V$ and is what we focus on in this paper. Define $T^2_\text{red} V := \ZZ \{ x_i \tsr x_j : i \neq j\}$ and $S^2_\text{red} V := \ZZ \{ x_i \tsr x_j + x_j \tsr x_i: i \neq j\}$. We show in Proposition \ref{p red notred} that our $W$-graph version of $T^2 V \tsr E$ has a cellular decomposition into $\tF^2 := \H \tsr_{J \backslash s_2} E$ and $\H \tsr_J E$, which at $\u=1$ become $T^2_\text{red} V \tsr E$ and $V \tsr E$.  There is a canonical map (\ref{e reduced v tsr v to reduced s^2})
$$ \tF^2 \xrightarrow{\tilde{\counit}} \H \tsr_{S \backslash s_2} E,$$
specializing at $\u =1$ to the projection $T^2_\text{red} V \tsr E \to S^2_\text{red} V \tsr E$.   The map $\tilde{\counit}$ does not send canonical basis elements to canonical basis elements, but it approximates doing so as the Hecke algebra parameter $\u \to 0$ (Corollary \ref{c approx at u=1}).  This partitions the canonical basis of $\tF^2$ into two parts--the approximate kernel, which we refer to as combinatorial wedge, and the  approximate inverse image of the canonical basis of $\H \tsr_{S \backslash s_2} E$, which we refer to as combinatorial reduced sym.  Theorem \ref{t combinatorial sym} determines this partition in terms of cells.

We also consider a $W$-graph version of tensoring with $V$ coming from the extended affine Hecke algebra. This mostly parallels the version just described, but there are some interesting differences.  Most notably, the combinatorics of this $W$-graph version of the inclusion $T^2_\text{red} V \tsr E \to V \tsr V \tsr E$ is transpose to that of the other; compare Theorems \ref{t combinatorial sym} and \ref{t combinatorial sym affine}.

This paper is organized mainly in order of decreasing generality. We begin in \textsection\ref{s Hecke algebra} by introducing the Hecke algebra, $W$-graphs, and the inducing $W$-graph construction. We reformulate some of this theory using the formalism of IC bases as presented in \cite{Du}. This has the advantage of avoiding explicit calculations involving Kazhdan-Lusztig polynomials, or rather, hides these calculations in the citations of \cite{HY1}, \cite{HY2}. This allows us to focus more on cells and cellular subquotients. In \textsection\ref{s H1H2} we specialize to
the case where $W$-graphs come from iterated induction from the regular representation. In this case we prove that all left cells are isomorphic to those occurring in the regular representation of $W$ (Theorem \ref{t HJHcells}). Next, in \textsection\ref{s tableau combinatorics}, we review the combinatorics of cells in the case $W = \S_n$ . As was first observed in \cite{R}, there is a beautiful connection between the Littlewood-Richardson rule and the cells
of an induced module $\H \tsr_{\H_J} E$ (Proposition \ref{p cells of induce}).  The combinatorics of the cells
of the restriction $\Res_{\H_J} \H$ is less familiar; see Conjecture \ref{c weak mult free}. Sections \ref{s computation c_w} and \ref{s canonical maps} give a nice result about how canonical basis elements behave under the projection $\H \tsr_J \H \to \H$.  The remaining sections \ref{s tensor V}, \ref{s decomposing VVE}, and \ref{s combinatorial approximation S2} contain our main results just discussed.



\section{IC bases and inducing $W$-graphs}

\label{s Hecke algebra}
\subsection{}
We will use the following notational conventions in this paper.  If $A$ is a ring and $S$  is a set, then $A S$ is a free $A$-module with basis $S$ (possibly endowed with some additional structure, depending on context).  Elements of induced modules $\H \tsr_{\H_J} E$ will be denoted $h \bt e$ to distinguish them from elements of a tensor product over $\ZZ$, $F \tsr_\ZZ E$, whose elements will be denoted $f \tsr e$. The symbol $[n]$ is used for the set $\{1,\ldots,n\}$ and also for the $\u$-integer (defined below), but there should be no confusion between the two.

\subsection{}

\label{ss coxeter group}
Let $W$ be a Coxeter group and $S$ its set of simple reflections. The \emph{length} $\ell(w)$ of $w \in W$ is the minimal $l$ such that $w=s_1\ldots s_l$ for some $s_i\in S$. If $\ell(uv)=\ell(u)+\ell(v)$, then $uv = u\cdot v$ is a \emph{reduced factorization}. The notation $L(w) = \{s\in S : sw < w\}, R(w) = \{s\in S : ws < w\}$ will be used for the left and right descent sets of $w$.

For any $J\subseteq S$, the \emph{parabolic subgroup} $W_J$ is the subgroup of $W$ generated by $J$. Each left (resp. right) coset $wW_J$ (resp. $W_Jw$) contains an unique element of minimal length called a minimal coset representative. The set of all such elements is denoted $W^J$ (resp. $\leftexp{J}W$). For any $w \in W$, define $\lJ{w}{J}$, $\rj{w}{J}$ by
\be w=\lJ{w}{J} \cdot \rj{w}{J},\ \lJ{w}{J} \in W^J,\ \rj{w}{J} \in W_J.\ee
Similarly, define $\lj{w}{J}$, $\rJ{w}{J}$ by
\be w= \lj{w}{J} \cdot \rJ{w}{J},\ \lj{w}{J} \in W_J,\ \rJ{w}{J} \in \leftexp{J}W.\ee

\subsection{}
Let $A = \ZZ[\u,\ui]$ be the ring of Laurent polynomials in the indeterminate $\u$, $A^{-}$ (resp. $A^{+}$) be the subring $\ZZ[\ui]$ (resp. $\ZZ[\u]$), and $\br{\cdot}:A\to A$ be the involution given by $\br{u} = \ui$. The \emph{Hecke algebra} $\H$ of $W$ is the free $A$-module with basis $\{T_w :\ w\in W\}$ and relations generated by
\be \label{e Hecke algebra def} \begin{array}{ll}T_uT_v = T_{uv} & \text{if } uv = u\cdot v\ \text{is a reduced factorization},\\
(T_s - \u)(T_s + \ui) = 0 & \text{if } s\in S.\end{array}\ee

For each $J\subseteq S$, $\H_J$ denotes the subalgebra of $\H$ with $A$-basis $\{T_w:\ w\in W_J\}$, which is also the Hecke algebra of $W_J$.

The involution, $\br{\cdot}$, of $\H$ is the additive map from $\H$ to itself extending the involution $\br{\cdot}$ on $A$ and satisfying $\br{T_w} = T_{w^{-1}}^{-1}$. Observe that $\br{T_{s}} = T_s^{-1} = T_s + \ui - u$ for $s \in S$. Some simple $\br{\cdot}$-invariant elements of $\H$ are $\C_\text{id} := T_\text{id}$ and $\C_s := T_s + \ui = T_s^{-1} + u$, $s\in S$. The $\br{\cdot}$-invariant $\u$-integers are $[k] := \frac{\u^k - \u^{-k}}{\u - \ui} \in A$.

\subsection{}
\label{ss IC basis}
\newcommand{\ord}{\ensuremath{\prec}}
Before introducing $W$-graphs and the Kazhdan-Lusztig basis, we will discuss a slightly more general setup for canonical bases.
The presentation here follows Du \cite{Du}. This formalism originated in \cite{KL} and was further developed by Lusztig and Kashiwara (see the references in \cite{Du}).

Given any $A$-module $E$ (no Hecke algebra involved), we can try to construct a \emph{canonical basis} or \emph{IC basis} from a \emph{standard basis} and involution $\br{\cdot}:E\to E$.  Let  $\{t_i:\ i\in I\}$ be an $A$-basis of $E$ (the standard basis) for some index set $I$ and assume the involution $\br{\cdot}$ \emph{intertwines} the involution $\br{\cdot}$ on $A$: $\br{at}=\br{a}\br{t}$ for any $a\in A$, $t\in E$. Define the lattice $\L$ to be $A^{-}\{t_i:\ i\in I\}$.  If there exists a unique $\br{\cdot}$-invariant basis $\left\{c_i : i\in I\right\}$ of the free $A^{-}$-module $\L$ such that $c_i \equiv t_i \mod \ui\L$, then $\left\{c_i : i\in I\right\}$ is an IC basis of $E$, denoted
\be \ic_E(\{t_i: i \in I\},\br{\cdot}). \ee
\begin{theorem}[Du \cite{Du}]\label{t IC basis}
With the notation above, if $(I, \ord)$ is a poset such that for all $j \in I$, $\{i \in I:i \ord j\}$ is finite and $\br{t_j} \equiv t_j \mod A \{t_i: i\ord j\}$, then the IC basis $\ic_E(\{t_i: i \in I\},\br{\cdot})$ exists.
\end{theorem}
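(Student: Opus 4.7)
The plan is to proceed by induction on the poset $(I, \ord)$, which is well-founded since the down-set $\{i \in I : i \ord j\}$ is assumed finite for each $j$. I would construct the $c_j$ one at a time, establishing in parallel that the $\{c_i\}$ already built form an $A$-basis of the submodule $\sum_{i \ord j} A t_i$ with unitriangular change of basis, so that the expansion of any element of this submodule in the $c_i$ is unambiguous and has coefficients in $A$.

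For the inductive step I would make the ansatz
$$c_j = t_j + \sum_{i \ord j} p_{ij}\, c_i, \qquad p_{ij} \in \ui A^{-},$$
which by construction guarantees both $c_j \in \L$ and $c_j \equiv t_j \bmod \ui\L$. Using the hypothesis $\br{t_j} \equiv t_j \bmod A\{t_i : i \ord j\}$ together with the inductively established $A$-basis, write uniquely
$$\br{t_j} = t_j + \sum_{i \ord j} a_{ij}\, c_i, \qquad a_{ij} \in A.$$
Applying $\br{\cdot}$ to the ansatz and using that each $c_i$ is already $\br{\cdot}$-invariant, the condition $\br{c_j} = c_j$ decouples into the scalar system $p_{ij} - \br{p_{ij}} = a_{ij}$ in $A$, one equation per $i \ord j$.

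The heart of the argument is then the following elementary fact about $A$: for $a \in A$, the equation $p - \br{p} = a$ admits a solution $p \in \ui A^{-}$ if and only if $a + \br{a} = 0$, and in that case the solution is unique. Writing $a = \sum a_k \u^k$, skew-invariance forces $a_0 = 0$ and $a_{-k} = -a_k$, and then $p = \sum_{k \le -1} a_k \u^k$ is the unique candidate. To apply this I would check that $a_{ij} + \br{a_{ij}} = 0$ for all $i \ord j$; this follows by applying $\br{\cdot}$ to the expansion of $\br{t_j}$, invoking the involutivity $\br{\br{t_j}} = t_j$, and using linear independence of $\{c_i : i \ord j\}$ over $A$.

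Uniqueness of the IC basis is then a standard $\br{\cdot}$-invariance argument: any competitor $c'_j$ with the same defining properties differs from $c_j$ by a $\br{\cdot}$-invariant element of $\ui A^{-}\{c_i : i \ord j\}$, and the only such element is zero since $\ui A^{-}$ contains no nonzero $\br{\cdot}$-invariants of $A$. The one place I expect any care is in the parallel bookkeeping — verifying that the newly built $c_j$ together with the previous $c_i$ indeed form an $A^{-}$-basis of the appropriate piece of $\L$ — but this is automatic from the unitriangular shape of the change-of-basis matrix, so the real content is the coefficient-matching lemma above.
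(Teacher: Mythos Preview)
Your proof is correct and is essentially the standard argument (going back to Kazhdan--Lusztig and formalized by Du). Note, however, that the paper itself does not supply a proof of this theorem: it is quoted from \cite{Du} and used as a black box, so there is no ``paper's own proof'' to compare against.

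One small remark on your uniqueness paragraph: you assert that any competitor $c'_j$ differs from $c_j$ by an element of $\ui A^{-}\{c_i : i \ord j\}$, but a priori $c'_j$ is only known to lie in $t_j + \ui\L$, not in $t_j + \ui A^{-}\{t_i : i \ord j\}$. The cleanest fix is to argue directly in the $t$-basis: write $c_j - c'_j = \sum_i b_i t_i$ with all $b_i \in \ui A^{-}$ (finite support, since $E$ is free on the $t_i$), pick $i$ maximal with $b_i \neq 0$, and use $\br{t_i} \equiv t_i \bmod A\{t_k : k \ord i\}$ to see that $\br{\cdot}$-invariance forces $\br{b_i} = b_i$, hence $b_i = 0$. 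This is the same idea you gave, just without needing the extra bookkeeping about which $c_i$ appear.
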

In the remainder of this paper, $\br{\cdot}$ will be clear from context so will be omitted from the $\ic()$ notation.
An observation that will be used in \textsection \ref{s cells} and \textsection \ref{s H1H2} is that this construction behaves well with taking lower order ideals.
\begin{proposition}\label{p ic lower ideal}
  With the notation of Theorem \ref{t IC basis}, if $I'$ is a lower order ideal of $I$ and $E' := A\{t_i:\ i\in I'\}$ , then $$  \ic_{E'}(\{t_i:\ i\in I'\}) = \{c_i: i \in I'\} \subseteq \ic_E(\{t_i:\ i\in I\} )$$
\end{proposition}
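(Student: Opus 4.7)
The plan is to use the uniqueness clause of Theorem \ref{t IC basis} twice, once to construct $\ic_{E'}$ and once to identify it with the appropriate subset of $\ic_E$. The key intermediate fact is that each IC basis element $c_i$ is ``triangular'' with respect to the poset, in the sense that $c_i \in A\{t_j : j \preceq i\}$; granted this, the proposition is almost immediate.

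First I would verify that the hypotheses of Theorem \ref{t IC basis} transfer to $E'$. Because $I'$ is a lower order ideal, $j \prec i$ and $i \in I'$ imply $j \in I'$, so the congruence $\br{t_i} \equiv t_i \mod A\{t_j : j \prec i\}$ shows $\br{t_i} \in E'$. Thus $\br{\cdot}$ restricts to an involution on $E'$ intertwining $\br{\cdot}$ on $A$, and $(I',\prec|_{I'})$ inherits the locally-finite-below property. So Theorem \ref{t IC basis} produces the IC basis $\ic_{E'}(\{t_i : i \in I'\})$.

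The main step is to establish the triangularity $c_i \in A\{t_j : j \preceq i\}$ for all $i \in I$. I would do this by induction on the poset $(I,\prec)$. For $i$ minimal, $\br{t_i} = t_i$, so $c_i = t_i$. For the inductive step, let $I_{\preceq i} := \{j \in I : j \preceq i\}$; this is itself a finite lower order ideal, and by the argument of the previous paragraph the submodule $E_{\preceq i} := A\{t_j : j \preceq i\}$ carries its own IC basis $\{c''_j : j \preceq i\}$. By the inductive hypothesis, $c_j \in E_{\preceq i}$ for $j \prec i$, hence each such $c_j$ (which is $\br{\cdot}$-invariant and $\equiv t_j \mod \ui\L_{E_{\preceq i}}$) coincides with $c''_j$. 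Now $c''_i$ is a $\br{\cdot}$-invariant element of $\L \cap E_{\preceq i}$ with $c''_i \equiv t_i \mod \ui\L$, so the uniqueness of $\ic_E$ forces $c_i = c''_i \in E_{\preceq i}$, completing the induction.

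Finally, for $i \in I'$ the triangularity and the lower-order-ideal property give $c_i \in A\{t_j : j \preceq i\} \subseteq E'$, and the expansion of $c_i - t_i$ in the $t_j$ with $j \in I'$ lies in $\ui A^-$, so $\{c_i : i \in I'\}$ is a $\br{\cdot}$-invariant family in $\L_{E'} := A^-\{t_i : i \in I'\}$ with $c_i \equiv t_i \mod \ui \L_{E'}$. By the uniqueness statement in Theorem \ref{t IC basis} applied to $E'$, this family equals $\ic_{E'}(\{t_i : i \in I'\})$, proving the proposition. The only real obstacle is the triangularity lemma; everything else is bookkeeping around the two uses of uniqueness.
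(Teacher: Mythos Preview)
Your proof is correct, but it takes a longer route than the paper's. The paper argues in the opposite direction: having checked (as you do) that the hypotheses of Theorem~\ref{t IC basis} transfer to $E'$, it lets $\{d_i : i \in I'\}$ be the resulting IC basis of $E'$, observes that $d_i \equiv t_i \bmod \ui\L'$ trivially implies $d_i \equiv t_i \bmod \ui\L$, and then invokes uniqueness in the big module $E$ to conclude $d_i = c_i$. No triangularity lemma is needed, and no induction.

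Your approach instead proves the triangularity $c_i \in A\{t_j : j \preceq i\}$ by induction over the poset, and in the inductive step you are essentially running the paper's argument for the principal lower ideal $I_{\preceq i}$: you build the IC basis $\{c''_j\}$ of $E_{\preceq i}$ and identify $c''_i$ with $c_i$ via uniqueness in $E$. So the paper's one-line argument is embedded inside each step of your induction. What your detour buys is the explicit triangularity statement, which is a useful fact in its own right; what the paper's approach buys is brevity, since it applies uniqueness once to the given ideal $I'$ rather than once for every principal ideal below it.
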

\begin{proof}
The poset $I'$ and the involution $\br{\cdot}$ restricted to $E'$ satisfy the necessary hypotheses so that Theorem \ref{t IC basis} applies.  Label the resulting IC basis by $d_i$, $i\in I'$ and put $\L' = A^{-}\{t_i:\ i\in I'\}$.  Then $d_i \equiv t_i \mod \ui\L'$ for $i\in I'$ certainly implies $d_i \equiv t_i \mod \ui\L$.  Uniqueness of the IC basis then implies $d_i = c_i \ (i \in I')$.
\end{proof}

We now come to the main construction studied in this paper. Let $E$ be an $\H_J$-module with an involution $\br{\cdot} : E\to E$ intertwining $\br{\cdot}$ on $\H_J$ ($\br{h e} = \br{h} \br{e}$ for all $h \in \H_J$ and $e \in E$).  Suppose $\Gamma$ is a $\br{\cdot}$-invariant $A$-basis of $E$.  Put $\tE = \H \tsr_{\H_J} E$.  We will apply Theorem \ref{t IC basis} to $\tE$ with standard basis $\tT := \{ \T_\mathbf{z} : \mathbf{z} \in W^J \times \Gamma\}$, where $\T_{w,\gamma} := T_w \bt \gamma$.  The lattice $\L$ is then $A^{-}\tT$.  Define the involution $\br{\cdot}$ on $\tE$ from the involutions on $E$ and $\H$:
\be \br{h\bt e} = \br{h}\bt \br{e}, \text{ for every $h\in\H, e\in E$}. \ee
It is easy to check (and is done in \cite{HY1}) that the definition of $\br{\cdot} : \tE \to\tE$ is sound, that it's an involution and intertwines the involution $\br{\cdot}$ on $\H$.

Let $\ord$ be the partial order on $W^J\times \Gamma$ generated by the rule: $(w',\gamma')\ord(w,\gamma)$ if $\T_{w',\gamma'} $ appears with non-zero coefficient in $(\br{T_w}-T_w) \bt \gamma$ expanded in the basis $\tT$.  Since $\br{T_w} - T_w$ is an $A$-linear combination of $T_x$ for $x < w$, it is easy to see that $\br{\T_{w,\gamma}} - \T_{w,\gamma}$ ($w\in W^J, \ \gamma \in \Gamma$) is an $A$-linear combination of $\{ \T_{x , \delta} : x<w, \delta \in \Gamma \}$, so the definition of $\ord$ is sound.   To see that $D_{w,\gamma} := \{(w',\gamma'):(w',\gamma')\preceq(w,\gamma)\}$ is finite, induct on $\ell(w)$.  The set $D_{w,\gamma}$ is the union of $\{ (w, \gamma) \}$ and $D_{w',\gamma'}$ over those $(w',\gamma')$ such that $\T_{w',\gamma'}$ appears with non-zero coefficient in $(\br{T_w}-T_w) \bt \gamma$, each of which is finite by induction.

Thus Theorem \ref{t IC basis} applies and we obtain a canonical basis $\Lambda = \ic_{\tE}(\tT) = \{\tB_{w,\gamma} : w \in W^J, \gamma \in \Gamma\}$ of $\tE$.  This is one way of proving the following theorem that is Theorem 5.1 in \cite{HY1} (there they use the basis $C_{w,\gamma}$ that is $\equiv\T_{w,\gamma}\mod \u A^+\tT)$.

\begin{theorem}[Howlett, Yin \cite{HY1}]
\label{t HY canbas exists}
There exists a unique $\br{\cdot}$-invariant basis $\Lambda=\{\tB_{w,\gamma} : w \in W^J, \gamma \in \Gamma\}$ of $\tE$ such that $\tB_{w,\gamma} \equiv \T_{w,\gamma}\mod \ui \L$.\end{theorem}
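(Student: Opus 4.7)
The theorem is set up as a direct application of Du's IC-basis result (Theorem~\ref{t IC basis}) to the data $(\tE, \tT, \br{\cdot}, \ord)$ that the preceding discussion has already assembled. The plan is therefore to verify Du's hypotheses for this data and read off the conclusion. Three things must be checked: that $\br{\cdot}$ is a well-defined involution on $\tE$ intertwining $\br{\cdot}$ on $A$; that the poset $(W^J \times \Gamma, \ord)$ has the property that each principal lower set $D_{w,\gamma}$ is finite; and that $\br{\T_{w,\gamma}} \equiv \T_{w,\gamma}$ modulo the $A$-span of $\{\T_{w',\gamma'} : (w',\gamma') \ord (w,\gamma)\}$.

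The triangularity is tautological: $\ord$ was defined precisely to make this congruence hold. For the involution, the one content-bearing point is that the formula $\br{h \bt e} := \br{h}\bt\br{e}$ respects the balancing relation $h h_J \bt e = h \bt h_J e$ for $h_J \in \H_J$, which is exactly where the hypothesis that $\br{\cdot}$ on $E$ intertwines the $\H_J$-action gets used; that the resulting map is involutive and intertwines $\br{\cdot}$ on $A$ is then formal.

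Finiteness of $D_{w,\gamma}$ I would prove by induction on $\ell(w)$. The base case $w = \mathrm{id}$ is immediate since $\br{T_\mathrm{id}} = T_\mathrm{id}$. In general, $D_{w,\gamma}$ is the union of $\{(w,\gamma)\}$ with the $D_{w',\gamma'}$ indexed by the finitely many $(w',\gamma') \in W^J \times \Gamma$ for which $\T_{w',\gamma'}$ appears in $(\br{T_w} - T_w) \bt \gamma$, so it suffices to verify that each such $w'$ satisfies $\ell(w') < \ell(w)$. I expect this to be the only step with real content beyond bookkeeping: the raw inclusion $\br{T_w} - T_w \in A\{T_x : x < w\}$ lives in $\H$ rather than in $W^J$, so one must rewrite each offending $T_x$ via the factorization $x = \lJ{x}{J} \cdot \rj{x}{J}$, push $T_{\rj{x}{J}}$ across $\bt$ to act on $\gamma$, re-expand using the basis $\Gamma$ of $E$, and invoke $\ell(\lJ{x}{J}) \le \ell(x) < \ell(w)$. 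With this in hand the induction closes, Theorem~\ref{t IC basis} applies, and existence and uniqueness of $\{\tB_{w,\gamma}\}$ follow at once.
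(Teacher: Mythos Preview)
Your proposal is correct and follows essentially the same route as the paper: the paragraphs immediately preceding the theorem statement assemble exactly the data you describe, define $\ord$ so that the triangularity is tautological, prove finiteness of each $D_{w,\gamma}$ by induction on $\ell(w)$, and then invoke Theorem~\ref{t IC basis}. Your remark about factoring $x = \lJ{x}{J}\cdot\rj{x}{J}$ and pushing $T_{\rj{x}{J}}$ across the tensor makes explicit a step the paper leaves as ``easy to see''; otherwise the arguments are identical.
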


Applied to $J=\emptyset$ and $\Gamma$ the free $A$-module of rank one, this yields the usual Kazhdan-Lusztig basis $\Gamma_W:=\{\C_w:w\in W\}$ of $\H$.

\subsection{}
In \cite{KL}, Kazhdan and Lusztig introduce $W$-graphs as a combinatorial structure for describing an $\H$-module with a special basis. A $W$-graph consists of a vertex set $\Gamma$, an edge weight $\mu(\delta,\gamma)\in \ZZ$ for each ordered pair $(\delta,\gamma)\in\Gamma\times\Gamma$, and a descent set $L(\gamma) \subseteq S$ for each $\gamma\in\Gamma$. These are subject to the condition that $A\Gamma$ has a left $\H$-module structure given by
\begin{equation}\label{Wgrapheq}\C_{s}\gamma = \left\{\begin{array}{ll} [2] \gamma & \text{if}\ s \in L(\gamma),\\
\sum_{\substack{\{\delta \in \Gamma : s \in L(\delta)\}}} \mu(\delta,\gamma)\delta & \text{if}\ s \notin L(\gamma). \end{array}\right.\end{equation}

We will use the same name for a $W$-graph and its vertex set.  If an $\H$-module $E$ has an $A$-basis $\Gamma$ that satisfies (\ref{Wgrapheq}) for some choice of descent sets, then we say that $\Gamma$ gives $E$ a \emph{$W$-graph structure}, or $\Gamma$ is a $W$-graph on $E$.

It is convenient to define two $W$-graphs $\Gamma,\Gamma'$ to be isomorphic if they give rise to isomorphic $\H$-modules with basis. That is, $\Gamma\cong\Gamma'$ if there is a bijection $\alpha:\Gamma\to\Gamma'$ of vertex sets such that $L(\alpha(\gamma)) = L(\gamma)$ and $\mu(\alpha(\delta),\alpha(\gamma)) = \mu(\delta,\gamma)$ whenever $L(\delta)\not\subseteq L(\gamma)$.

Given a $W$-graph $\Gamma$, we always have an involution
\be \label{e W-graph inv}
\br{\cdot}: A \Gamma \to A \Gamma, \text{ with } \br{\gamma} = \gamma \text{ for every } \gamma \in \Gamma,\ee
and extended $A$-semilinearly using the involution on $A$.  It is quite clear from (\ref{Wgrapheq}) (and checked in \cite{HY1}) that this involution intertwines $\br{\cdot}$ on $\H$.

\subsection{}\label{ss induced W graph}
Now let $\Gamma$ be a $W_J$-graph, $E = A \Gamma$, and $\br{\cdot} : E \to E$ be as just mentioned in (\ref{e W-graph inv}).  Then we are in the setup of \textsection \ref{ss IC basis} except $\Gamma$ is a $W_J$-graph instead of any $\br{\cdot}$-invariant basis of $E$.  Maintaining the notation of \textsection \ref{ss IC basis}, let $\Lambda = \ic_{\tE}(\tT) = \left\{\tB_{w,\gamma} :\ w\in W^J, \gamma\in\Gamma\right\}$.  As would be hoped, $\Lambda$ gives $\tE$ a $W$-graph structure.



Define $\tP_{x,\delta,w,\gamma}$ by the formula
\be\tB_{w,\gamma} = \sum\limits_{(x,\delta)\in W^J\times\Gamma} \tP_{x,\delta,w,\gamma}\ \T_{x,\delta}.\ee

For every $(x,\delta),(w,\gamma)\in W^J \times \Gamma$ define
\be\label{e mudef}
\mu(x,\delta,w,\gamma) = \left\{\begin{array}{ll}
\text{coefficient of}\ \ui \ \text{in}\  \tP_{x,\delta,w,\gamma} & \text{if } x < w,\\
\mu(\delta,\gamma) & \text{if } x = w, \\
1 & \text{if } x = sw,\ x > w,\ s\in S,\ \delta = \gamma,\\
0 & \text{otherwise}. \end{array}\right.\ee%
Also define $L(w,\gamma)=L(w)\cup \{s\in S : sw = wt,\ t\in L(\gamma)\}$.
Now we can state the main result of Howlett and Yin.
\begin{theorem}[{\cite[Theorem 5.3]{HY1}}] \label{t HY} With $\mu$ and $L$ as defined above, $\Lambda$ gives $\tE = \H\tsr_{\H_J} A\Gamma$ a $W$-graph structure.
\end{theorem}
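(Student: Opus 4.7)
The plan is to verify the $W$-graph relation (\ref{Wgrapheq}) with the prescribed $\mu$ and $L$ by computing $\C_s \tB_{w,\gamma}$ for each $s\in S$, $w\in W^J$, $\gamma\in\Gamma$ and matching it against the right-hand side. The engine is the uniqueness built into Theorem \ref{t IC basis}: a $\br{\cdot}$-invariant element of the lattice $\L = A^{-}\tT$ is determined by its reduction mod $\ui\L$. Since $\C_s = T_s + \ui$ is visibly $\br{\cdot}$-invariant and $\tB_{w,\gamma}$ is $\br{\cdot}$-invariant by Theorem \ref{t HY canbas exists}, the product $\C_s \tB_{w,\gamma}$ is $\br{\cdot}$-invariant. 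So for each candidate right-hand side it suffices to check lattice membership and matching reduction mod $\ui\L$.

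The calculation splits according to how $s$ sits with respect to $w\in W^J$: either (a) $sw\in W^J$ with $sw<w$; (b) $sw\in W^J$ with $sw>w$; or (c) $sw = wt$ for some $t\in J$, in which case automatically $sw > w$ (because $w\in W^J$) and $sw\notin W^J$. Cases (a) and (b) follow directly from the quadratic relation $T_s^2 = (u-\ui)T_s + 1$ and the factorization $T_s T_w = T_{sw}$; they account for the $[2]\tB_{w,\gamma}$ contribution (when $s\in L(w)$) and the $\tB_{sw,\gamma}$ contribution (the ``$x=sw$, $x>w$'' clause of (\ref{e mudef})). Case (c) is where the $W_J$-graph structure of $\Gamma$ enters: from the reduced factorization $s\cdot w = w\cdot t$ one has $T_s T_w = T_w T_t$, and since $T_t\in\H_J$ it crosses the tensor product, $\C_s\T_{w,\gamma} = T_w \bt (T_t\gamma) + \ui\T_{w,\gamma}$. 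If $t\in L(\gamma)$ the $W_J$-graph rule gives $T_t\gamma = u\gamma$, whence $\C_s\T_{w,\gamma} = [2]\T_{w,\gamma}$, in agreement with $s\in L(w,\gamma)$; if $t\notin L(\gamma)$ one gets $T_t\gamma = \sum_{\delta: t\in L(\delta)}\mu(\delta,\gamma)\delta - \ui\gamma$, and substitution yields $\C_s\T_{w,\gamma} = \sum_{\delta: t\in L(\delta)}\mu(\delta,\gamma)\T_{w,\delta}$, matching the ``$x=w$'' clause of (\ref{e mudef}).

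To lift these standard-basis computations to the canonical basis I would expand $\tB_{w,\gamma} = \T_{w,\gamma} + \sum_{(x,\delta)\prec(w,\gamma)} \tP_{x,\delta,w,\gamma}\T_{x,\delta}$ with $\tP$-coefficients in $\ui A^-$, and induct on $\ell(w)$ to identify the coefficient of each lower $\tB_{x,\delta}$ in $\C_s\tB_{w,\gamma}$. The hardest step, and the place where the definition of $\mu$ in (\ref{e mudef}) is actually forced, is the ``$x<w$'' clause: the various pieces $\C_s\T_{x,\delta}$ arising from the correction sum interact, and one must verify that the net coefficient of $\ui$ in the combined error, after subtracting off the already-identified top-degree and $x=w$ contributions, is precisely the coefficient of $\ui$ in $\tP_{x,\delta,w,\gamma}$ required for $\C_s\tB_{w,\gamma}$ to lie in $\L$ with the correct reduction mod $\ui\L$. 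This is the direct analogue of the original Kazhdan-Lusztig argument at $J = \emptyset$, and Howlett-Yin's \cite[Theorem 5.3]{HY1} amounts to running that template in the induced setting, with the $W_J$-graph structure of $\Gamma$ furnishing the boundary data for case (c).
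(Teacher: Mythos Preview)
The paper does not prove this theorem; it is stated with attribution to Howlett--Yin \cite[Theorem~5.3]{HY1} and no argument is given. Your sketch follows exactly the approach of the original Howlett--Yin proof (and its Kazhdan--Lusztig prototype at $J=\emptyset$): case split on the position of $sw$ relative to $W^J$, use the $W_J$-graph relations on $\Gamma$ in the boundary case $sw=wt$, and identify coefficients by the $\br{\cdot}$-invariance plus lattice uniqueness principle. So your proposal is correct in outline and matches the cited source rather than anything the present paper does independently.
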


We will often abuse notation and refer to a module when we really mean the $W$-graph on that module, but there should be no confusion as there will never be more than one $W$-graph structure on a given module.  We will use the notation $\H \tsr_{\H_J} \Gamma$ to mean the $\Lambda$ in this theorem, in case we want refer to its vertex set or to emphasize the $W$-graph rather than the module.

\begin{remark}A $W$-graph is \emph{symmetric} if it is isomorphic to a $W$-graph with $\mu(x,w) = \mu(w,x)$ for all vertices $x,w$. The $W$-graph $\Gamma_W$ on the regular representation of $\H$ is symmetric. The $W$-graph $\Lambda$ defined above is symmetric if and only if $\Gamma$ is symmetric, although this is not obvious from the definition of $\mu$ (\ref{e mudef}). In \cite{HY1}, the $W$-graph for $\Lambda$ is defined so that it is clearly symmetric, and then it is proved later that it is isomorphic to the $W$-graph $\Lambda$ defined here.
\end{remark}

\subsection{}
\label{s cells}
Let $\Gamma$ be a $W$-graph and put $E=A\Gamma$. The preorder $\leq_{\Gamma}$ on the vertex set $\Gamma$ is generated by
\be \delta\leq_{\Gamma}\gamma \begin{array}{c}\text{if there is an $h\in\H$ such that $\delta$ appears with non-zero}\\ \text{coefficient in the expansion of $h\gamma$
in the basis $\Gamma$}. \end{array}
\ee
Equivalence classes of $\leq_{\Gamma}$ are the \emph{left cells} of $\Gamma$, or just \emph{cells} since we will almost exclusively work with left cells. Sometimes we will speak of the cells of $E$ or the preorder on $E$ to mean that of $\Gamma$, when the $W$-graph $\Gamma$ is clear from context.
 A \emph{cellular submodule, quotient, or subquotient} of $E$ is a submodule, quotient, or subquotient of $E$ that is spanned by a subset of $\Gamma$ (and is necessarily a union of cells). We will abuse notation and sometimes refer to a cellular subquotient by its corresponding union of cells.

We will give one result about cells in the full generality of \textsection\ref{ss induced W graph} before specializing $W$ and the $W_J$-graph $\Gamma$. Let $D$ be a cellular submodule of $E$ spanned by a subset $\Gamma_D$ of $\Gamma$ and $p : E\to E/D$ the projection. Put $\Gamma_{E/D} = p(\Gamma\backslash \Gamma_D)$. The $W_J$-graph $\Gamma$ yields $W_J$-graphs $\Gamma_D$ on $D$ and $\Gamma_{E/D}$ on $E/D$.  The involution $\br{\cdot}$ on $E$ restricts to one on $D$ and projects to one on $E/D$; elements of $\Gamma_D$ (resp. $\Gamma_{E/D}$) are fixed by the involution $\br{\cdot}$ on $D$ (resp. $E/D$). Since $\H$ is a free right $\H_J$-module, we have the exact sequence
\be
\xymatrix{0 \ar[r] & \H\tsr_JD \ar[r] & \H\tsr_JE \ar[r]_<<<<{\tilde{p}}& \H\tsr_JE / D \ar[r]& 0,}
\ee
where the shorthand $\H\tsr_{J}E := \H\tsr_{\H_J}E$ will be used here and from now on. In other words, inducing commutes with taking subquotients. It is also true that inducing and taking canonical bases commutes with taking cellular subquotients:

\begin{proposition}\label{CBSubquotientprop} With the notation above and that of \textsection\ref{ss induced W graph}, let $\tT_D = \left\{\T_{w,\gamma} : w\in W^J, \gamma\in \Gamma_D\right\}$ and $\tT_{E/D} = \left\{T_w\bt \gamma: w\in W^J, \gamma\in \Gamma_{E/D}\right\}$.  Then
 \begin{list}{(\roman{ctr})}{\usecounter{ctr} \setlength{\itemsep}{1pt} \setlength{\topsep}{2pt}}
\item $\ic_{\H\tsr_J D}\left(\tT_D \right) = \left\{\tB_{w,\gamma} : w\in W^J, \gamma\in \Gamma_D \right\}\subseteq \ic_{\tE}(\tT)$,

\item $\ic_{\H\tsr_J E/D}\left(\tT_{E/D}\right) = \left\{\tilde{p}(\tB_{w,\gamma}) : w\in W^J, \gamma\in \Gamma\backslash \Gamma_D \right\}\subseteq \tilde{p}\left(\ic_{\tE}(\tT)\right)$.
\end{list}
\end{proposition}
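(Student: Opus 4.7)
The plan is to deduce (i) as a direct application of Proposition \ref{p ic lower ideal}, and (ii) from the uniqueness clause of Theorem \ref{t IC basis} applied to the quotient setup. Both parts ultimately rest on the same observation: the $\H_J$-stability of $D$ forces the preorder $\prec$ on $W^J \times \Gamma$ from \textsection\ref{ss IC basis} to respect the partition into $\Gamma_D$ and $\Gamma \setminus \Gamma_D$.

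For (i), I first verify that $W^J \times \Gamma_D$ is a lower order ideal of $(W^J \times \Gamma, \prec)$; granted this, Proposition \ref{p ic lower ideal} applies with $E' = A\{\T_{w,\gamma} : w \in W^J, \gamma \in \Gamma_D\} = \H \tsr_J D$ and yields (i) at once. To check lower-ideal-ness it suffices to treat the generating relation: if $\gamma \in \Gamma_D$ and $\T_{w',\gamma'}$ appears with nonzero coefficient in $(\br{T_w} - T_w) \bt \gamma$ expanded in $\tT$, then $\gamma' \in \Gamma_D$. Since $\br{T_w} - T_w$ is an $A$-linear combination of $T_x$ with $x < w$, it is enough to expand a single $T_x \bt \gamma$ with $\gamma \in \Gamma_D$. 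Writing $x = u \cdot v$ with $u \in W^J$, $v \in W_J$ gives $T_x \bt \gamma = T_u \bt (T_v \gamma)$, and because $D$ is an $\H_J$-submodule, $T_v \gamma \in D$ expands in $\Gamma_D$; hence $T_x \bt \gamma$ expands in $\{\T_{u,\delta} : \delta \in \Gamma_D\}$. This expansion check is the crux of the argument; the rest is bookkeeping.

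For (ii), note first that $\br{\cdot}$ descends from $E$ to $E/D$ (since $\Gamma_D$ is $\br{\cdot}$-fixed, $D$ is $\br{\cdot}$-stable), and hence from $\tE$ to $\H \tsr_J (E/D)$, in such a way that $\tilde{p}$ intertwines both involutions. The construction of \textsection\ref{ss IC basis} then applies verbatim to the $W_J$-graph $\Gamma_{E/D}$ on $E/D$, producing the IC basis $\ic_{\H \tsr_J E/D}(\tT_{E/D})$. Finally, for each $\gamma \in \Gamma \setminus \Gamma_D$, the projection $\tilde{p}(\tB_{w,\gamma})$ is $\br{\cdot}$-invariant and satisfies $\tilde{p}(\tB_{w,\gamma}) \equiv T_w \bt p(\gamma) \mod \ui A^- \tT_{E/D}$ (because $\tB_{w,\gamma} \equiv \T_{w,\gamma} \mod \ui \L$ and $\tilde{p}(\T_{w,\gamma}) = T_w \bt p(\gamma) \in \tT_{E/D}$). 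By the uniqueness clause of Theorem \ref{t IC basis}, these projections must coincide with $\ic_{\H \tsr_J E/D}(\tT_{E/D})$, giving (ii).

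The only real technical step is the lower-ideal verification in (i); everything else is a formal consequence of the uniqueness of IC bases and the exactness of $\H \tsr_{\H_J} -$ on the underlying sequence $0 \to D \to E \to E/D \to 0$.
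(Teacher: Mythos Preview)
Your proof is correct and follows essentially the same route as the paper: (i) is deduced from Proposition~\ref{p ic lower ideal} after checking that $W^J\times\Gamma_D$ is a lower order ideal, and (ii) is proved directly by showing each $\tilde{p}(\tB_{w,\gamma})$ is $\br{\cdot}$-invariant and congruent to $T_w\bt p(\gamma)\mod \ui\L_{E/D}$, then invoking uniqueness. Your verification of the lower-ideal property (via $T_x\bt\gamma=T_u\bt(T_v\gamma)$ and the $\H_J$-stability of $D$) is more detailed than the paper's, which simply asserts it ``from the definition of $\ord$''; one minor quibble is that uniqueness of the IC basis is part of its definition rather than a clause of Theorem~\ref{t IC basis} (which is about existence), but this does not affect the argument.
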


\begin{proof}
Statement (i) is actually a special case of Proposition \ref{p ic lower ideal}.  From the definition of $\ord$ in \textsection \ref{ss IC basis} we can see that $W^J \times \Gamma_D$ is a lower order ideal of $W^J \times \Gamma$.

We prove (ii) directly.  The lattice $\L_{E/D} := A^{-}\tT_{E/D}$ is the quotient $\L / \L_D = \tilde{p}(\L)$. Therefore, given $w\in W^J$ and $\gamma\in \Gamma\backslash \Gamma_D$, we have
\be\tilde{p}(\tB_{w,\gamma}) = \tilde{p}(T_w \bt \gamma + \ui x) \equiv \tilde{p}(T_w \bt \gamma) = T_w \bt p(\gamma),\ee
where $x$ is some element of $\L$ and the congruence is $\mod \ui \L_{E/D}$.  By definition, $p(\gamma) \in \Gamma_{E/D}$ so $\tilde{p}(\tB_{w,\gamma})$ is the element of $\ic_{\H\tsr_J E/D}\left(\tT_{E/D}\right)$ congruent to $T_w \bt p(\gamma)\mod\ui\L_{E/D}$.
\end{proof}

This proposition is essentially \cite[Theorem 4.3]{HY2}, though the proof here is different. It also appears in \cite[Theorem 1]{G1} in the case that $\Gamma = \Gamma_{W_J}$ (the usual $W_J$-graph on $\H_J$) but in the generality of unequal parameters.
\section{Iterated induction from the regular representation}
\label{s H1H2}
In this paper we will primarily be interested in the case where $E$ is obtained by some sequence of inductions and restrictions of the regular representation of a Hecke algebra, or subquotients of such modules. In this section, let $\tE$ denote $\H_1 \tsr_J E$, where $E = A \Gamma, \Gamma = \Gamma_{W_2}$ unless specified otherwise.

\subsection{}

Suppose we are given Coxeter groups $W_1$, $W_2$ with simple reflections $S_1,S_2$ and a set $J$ with inclusions $i_k: J\to S_k, k=1,2$ such that ${(W_1)}_{i_1(J)} \cong {(W_2)}_{i_2(J)}$ as Coxeter groups. Define the set
\be\WJW:=
\left\{(w_1,w_2) : w_1\in W_1, w_2\in W_2\right\} \big/ \langle (w_1w,w_2)\sim(w_1,ww_2) : w\in W_J\rangle,\ee
where $W_J := {W_1}_J \cong {W_2}_J$. The set $\WJW$ can also be identified with any of $W_1\times \JW$, ${W_1}^J\times W_2$, or  $W_1^{J}\times W_J\times \JW$.  These sets label canonical basis elements of Hecke algebra modules obtained by inducing from the regular representation just as a Coxeter group labels the canonical basis elements of its regular representation.

The material that follows in this subsection is somewhat tangent from our main theme, but we include it for completeness. We omit the details of proofs.

The set $\WJW$ comes with a left action by $W_1$, a length function, and a partial order generalizing the Bruhat order, as described in the following proposition.

\begin{proposition}\label{W-set prop}Let $(w_1,w_2)\in \WJW$. The set $\WJW$ comes equipped with \begin{list}{(\roman{ctr})} {\usecounter{ctr} \setlength{\itemsep}{1pt} \setlength{\topsep}{2pt}}
\item A left action by $W_1 :\  x\cdot (w_1, w_2) = (xw_1, w_2)$,
\item a length function: $\ell(w_1,w_2) = \ell(w_1)+\ell(w_2)$ whenever $w_1\in {W_1}^J$,
\item a partial order: $(w_1',w_2')\leq (w_1,w_2)$, whenever there exists $(w_1'',w_2'')\sim (w_1',w_2')$ such that $w_i''\leq w_i,\ w_i',w_i'' \in {W}_i, i = 1,2,$ and $w_1\in {W_1}^J$.
\end{list}
\end{proposition}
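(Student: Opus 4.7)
My plan is to transport all three structures through the canonical bijection $\WJW \leftrightarrow W_1^J \times W_2$: every equivalence class contains a unique representative $(w_1, w_2)$ with $w_1 \in W_1^J$, since any chain of generating moves $(aw, b) \sim (a, wb)$ (with $w \in W_J$) preserves the coset $w_1 W_J$, and minimal coset representatives are unique. A symmetric argument yields the bijection with $W_1 \times \leftexp{J}{W_2}$, and combining these gives the identification with $W_1^J \times W_J \times \leftexp{J}{W_2}$.

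Parts (i) and (ii) are essentially formal. The left action $x \cdot (w_1, w_2) := (xw_1, w_2)$ is well-defined because left multiplication on the first coordinate commutes with the generating equivalence: $x \cdot (w_1 w, w_2) = (x w_1 w, w_2) \sim (x w_1, w w_2) = x \cdot (w_1, w w_2)$; associativity of the action is trivial. The length function $\ell(w_1, w_2) = \ell(w_1) + \ell(w_2)$ is well-defined because the canonical $W_1^J$-representative in each class is unique.

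The main content is (iii). Well-definedness of the relation is automatic since the definition quantifies over all representatives of the left-hand class while $(w_1, w_2)$ is already specified as the canonical $W_1^J$-representative. Reflexivity is immediate. For antisymmetry, suppose $(w_1', w_2') \leq (w_1, w_2) \leq (w_1', w_2')$ with $w_1, w_1' \in W_1^J$. Extracting witnesses produces $c, d \in W_J$ with $w_1' c \leq w_1$, $c^{-1} w_2' \leq w_2$, $w_1 d \leq w_1'$, and $d^{-1} w_2 \leq w_2'$. Since $w_1' \in W_1^J$ and $c \in W_J$ makes $w_1' c$ a reduced factorization (parabolic decomposition), and likewise for $w_1 d$, the Bruhat inequalities give $\ell(w_1') + \ell(c) \leq \ell(w_1)$ and $\ell(w_1) + \ell(d) \leq \ell(w_1')$. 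Summing forces $\ell(c) = \ell(d) = 0$, so $c = d = e$, and the four witness inequalities collapse to $w_1 \leq w_1' \leq w_1$ and $w_2 \leq w_2' \leq w_2$, yielding equality in both coordinates.

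The hard part is transitivity. Given $(a, b) \leq (a', b') \leq (a'', b'')$ with $a, a', a'' \in W_1^J$ and witnesses $c_1, c_2 \in W_J$, the plan is to exhibit $c_1 c_2 \in W_J$ as a single witness for $(a, b) \leq (a'', b'')$. One applies the subword characterization of the Bruhat order: concatenating a reduced subword for $a c_1$ inside a reduced expression for $a'$ with a reduced expression for $c_2$ yields a (possibly non-reduced) subword, of a reduced expression for $a''$, that spells out $a c_1 c_2$; the strong exchange property then trims this to a genuine reduced subword, giving $a c_1 c_2 \leq a''$. The second-coordinate inequality $c_2^{-1} c_1^{-1} b \leq b''$ is handled symmetrically using inversion-invariance of the Bruhat order. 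These Coxeter-theoretic manipulations are standard, consistent with the paper's remark that it omits the details; the main obstacle is simply bookkeeping the interaction of parabolic factorization with Bruhat order under non-reduced products.
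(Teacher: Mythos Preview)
The paper explicitly omits the proof of this proposition, so there is nothing to compare against; what follows is an assessment of your argument on its own terms.

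Your treatment of (i), (ii), well-definedness, reflexivity, and antisymmetry is correct. The antisymmetry argument in particular is clean: the parabolic length additivity $\ell(w_1'c)=\ell(w_1')+\ell(c)$ for $w_1'\in W_1^J$, $c\in W_J$ is exactly what forces $c=d=e$.

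The transitivity argument, however, has a genuine gap. Your first-coordinate step is fine: from $ac_1\le a'$ and $a'c_2\le a''$ you get $ac_1c_2\le a'c_2\le a''$, and the subword reasoning works precisely because $a'\in W_1^J$ makes $a'\cdot c_2$ a reduced factorization, so a reduced word for $a'$ followed by one for $c_2$ is a reduced word for $a'c_2$. But the second coordinate is \emph{not} symmetric to this. After inversion you would need $(b')^{-1}\cdot c_2$ to be reduced, and nothing in the hypotheses forces $(b')^{-1}\in W_2^J$. Without that, the concatenated word for $b^{-1}c_1c_2$ need not sit inside any reduced word for $(b')^{-1}c_2$, and the chain breaks.

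In fact the proposed witness $c=c_1c_2$ can fail outright. Take $W_1=W_2=\S_3$, $J=\{s_1\}$, and
\[
(a,b)=(e,s_2),\qquad (a',b')=(e,s_1s_2),\qquad (a'',b'')=(s_1s_2,s_2),
\]
with $c_1=e$, $c_2=s_1$. One checks $c_1^{-1}b=s_2\le s_1s_2=b'$ and $c_2^{-1}b'=s_2\le s_2=b''$, and the first-coordinate conditions hold. But $c_2^{-1}c_1^{-1}b=s_1s_2\not\le s_2=b''$, so $c_1c_2=s_1$ does not witness $(a,b)\le(a'',b'')$. Transitivity does hold here, but only with the different witness $c=e$.

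So the ``handled symmetrically'' clause hides the real difficulty: one must either produce a witness that is not simply $c_1c_2$, or reformulate the order (e.g.\ via the characterization $b\le^{*}b''\iff b\le w_0^J\cdot\leftexp{J}{b''}$, or via the Howlett--Yin description referenced in the next proposition of the paper) so that transitivity becomes a consequence of standard facts about projections $W\to\leftexp{J}W$ being Bruhat order-preserving. Either route requires more than the bookkeeping you indicate.
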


\begin{proposition}
The $W_1$-graph $\tE$ is bipartite in the sense of \cite[Definition 3.1]{HY2}. Moreover, if $\mathbf{z}, \mathbf{z'} \in \WJW$, and $\ell(\mathbf{z})-\ell(\mathbf{z'})$ is even (resp. odd), then $\tP_{\mathbf{z'},\mathbf{z}}$ involves only even (resp. odd) powers of $\u$.
\end{proposition}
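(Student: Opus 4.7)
The plan is to reduce the bipartite claim to the $u$-parity claim on $\tP$-polynomials, then prove the parity claim by induction on $\ell(w_1)$.

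For the reduction, color each vertex $\mathbf{z}=(w_1,w_2)\in\WJW$ by the parity of the length $\ell(w_1)+\ell(w_2)$ from Proposition \ref{W-set prop}. Each of the four clauses in (\ref{e mudef}) then forces $\mu(\mathbf{z'},\mathbf{z})\neq 0$ to imply opposite colors: the $x=w$ clause invokes the classical bipartiteness of $\Gamma_{W_2}$ (Kazhdan--Lusztig parity inside $\H_2$); the $x=sw$ clause shifts the length by $1$; and the $x<w$ clause, where $\mu$ is by definition the coefficient of $\ui$ in $\tP_{\mathbf{z'},\mathbf{z}}$, is exactly the $\ui$-slice of the parity claim itself.

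For the parity claim, I would induct on $\ell(w_1)$, with $w_2$ arbitrary. The base case $w_1=\text{id}$ is immediate: $\tB_{(\text{id},w_2)}=1\bt\C_{w_2}$ is already bar-invariant and equal to $\T_{(\text{id},w_2)}$, so $\tP$ is the identity matrix and the claim is trivial. For the inductive step, pick $s\in L(w_1)$; since $w_1\in W_1^J$ we have $sw_1\in W_1^J$ with $\ell(sw_1)=\ell(w_1)-1$. Assuming for now that $s\notin L((sw_1,w_2))$, Theorem \ref{t HY} yields
\[
\C_s\cdot \tB_{(sw_1,w_2)} \;=\; \tB_{(w_1,w_2)} \;+\; \sum_{\mathbf{z'}\neq (w_1,w_2)} \mu(\mathbf{z'},(sw_1,w_2))\, \tB_{\mathbf{z'}},
\]
where the coefficient $1$ on $\tB_{(w_1,w_2)}$ comes from the third clause of (\ref{e mudef}). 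Solving for $\tB_{(w_1,w_2)}$ and expanding in the standard basis, multiplication by $\C_s=T_s+\ui$ sends each $T_{x_1}\bt\C_{x_2}$ to $T_{sx_1}\bt\C_{x_2}+u^{\pm 1}T_{x_1}\bt\C_{x_2}$; in both summands the parity of the new coefficient flips by $1$ (either by the $\pm 1$ length change $\ell(sx_1)=\ell(x_1)\pm 1$, or by the $\pm 1$ shift of $u$-degree), matching the unit shift of the target length from $(sw_1,w_2)$ to $(w_1,w_2)$. The subtracted terms $\mu\,\tB_{\mathbf{z'}}$ carry $\mu\in\ZZ$ (zero $u$-degree), and the bipartite half of the inductive hypothesis forces $\mu\neq 0$ to imply $\ell(\mathbf{z'})\equiv \ell(w_1,w_2)\pmod 2$, so the inductively-known $\tP_{\cdot,\mathbf{z'}}$ already have the required parity relative to $(w_1,w_2)$.

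The main technical subtlety is the case where our chosen $s\in L(w_1)$ nevertheless satisfies $s\in L((sw_1,w_2))$ via the second clause of the $L$-definition, i.e., when $w_1=sw_1\cdot t$ for some $t\in L(\C_{w_2})\subseteq J$; then $\C_s\,\tB_{(sw_1,w_2)}=[2]\,\tB_{(sw_1,w_2)}$ and the identity above degenerates. I would handle this by working directly from the bar-recursion underlying Theorem \ref{t HY canbas exists}: the classical $R$-polynomials in $\br{T_w}=\sum_{x\leq w}R_{x,w}(u)T_x$ have $u$-degree parity equal to $\ell(w)-\ell(x)\pmod 2$, and this parity propagates through the decomposition $T_x\bt\C_{w_2}=T_{\lJ{x}{J}}\bt(T_{\rj{x}{J}}\C_{w_2})$ (invoking classical KL parity inside $\H_2$ for the expansion of $T_{\rj{x}{J}}\C_{w_2}$ in $\Gamma_{W_2}$) and then through the uniqueness characterization $\tB_{\mathbf{z}}\equiv\T_{\mathbf{z}}\bmod \ui\L$, forcing $\tP$ to inherit the claimed parity.
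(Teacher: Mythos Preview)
The paper gives no independent proof here; it simply cites \cite[Proposition 3.2]{HY2}. Your direct argument is therefore a genuinely different (and more informative) route, and it is essentially correct. Two remarks on the execution.

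The ``main technical subtlety'' you flag is a phantom. If $w_1\in W_1^J$ and $s(sw_1)=(sw_1)t$ with $t\in J$, then $w_1t=sw_1<w_1$, contradicting $w_1\in W_1^J$; hence $s\notin L((sw_1,w_2))$ automatically and the identity $\C_s\tB_{(sw_1,w_2)}=\tB_{(w_1,w_2)}+\cdots$ never degenerates. A subtlety you do \emph{not} flag, but which must be checked for the induction on $\ell(w_1)$ to be well-founded, is that the third clause of (\ref{e mudef}) with $s'\neq s$ could in principle contribute a term $\tB_{(s'sw_1,w_2)}$ whose first coordinate has length $\ell(w_1)$. It does not: with $v=sw_1$ one has $(s'v)^{-1}\alpha_s=v^{-1}\alpha_s+c\,v^{-1}\alpha_{s'}$ for some $c\geq 0$, a positive root that (being a sum of positive roots) is never simple, so $s\notin L((s'v,w_2))$ and the term is absent from the sum.

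Your expansion ``$\C_s$ sends $T_{x_1}\bt\C_{x_2}$ to $T_{sx_1}\bt\C_{x_2}+u^{\pm1}T_{x_1}\bt\C_{x_2}$'' is only literally true when $sx_1\in W_1^J$. When instead $sx_1=x_1t$ for $t\in J$, one must push $T_t$ across the tensor and re-expand $T_t\C_{x_2}$ in $\Gamma_{W_2}$, obtaining either $[2]\,\T_{(x_1,x_2)}$ or $\T_{(x_1,tx_2)}+\sum_y\mu(y,x_2)\,\T_{(x_1,y)}$ according to whether $t\in L(x_2)$. The parity bookkeeping still goes through in every branch (invoking classical KL parity in $\H_2$ for the $\mu$-sum), but it deserves a sentence. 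Your $R$-polynomial fallback is in fact the cleanest way to run the whole argument and bypasses all of these case checks; that is almost certainly what \cite{HY2} does.
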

\begin{proof}
This follows from \cite[Proposition 3.2]{HY2}.
\end{proof}

\begin{proposition}
The $W_1$-graph $\tE$ is ordered in the sense of \cite[Definition 1.1]{HY2}. Stronger, $\WJW$ has a partial order from Proposition 2.2 of \cite{HY2} using the Bruhat order on $W_2$, and this agrees with $\leq$ of Proposition \ref{W-set prop}. Therefore if $\mathbf{z, z'} \in \WJW$ and $\tP_{\mathbf{z', z}} \neq 0$, then $\mathbf{z'} \leq \mathbf{z}$.
\end{proposition}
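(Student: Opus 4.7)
The plan is to derive the proposition from \cite[Proposition 2.2]{HY2}, which, for any $W_J$-graph $\Gamma$ whose vertex set carries a compatible partial order, constructs a partial order on $W_1^J \times \Gamma$ witnessing that the induced $W_1$-graph is ordered in the sense of \cite[Definition 1.1]{HY2}. In our setting $\Gamma = \Gamma_{W_2}$ is the Kazhdan-Lusztig basis of the regular representation of $\H_2$, and the Bruhat order on $W_2$ supplies such a partial order on $\Gamma$; the required compatibility is the familiar fact that $\C_x$ appears in the expansion of $\C_s \C_w$ for $s \in J$ only when $x \leq w$ in $W_2$. Invoking \cite[Proposition 2.2]{HY2} directly yields the first sentence of the proposition.

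The crux is to match the resulting partial order on $W_1^J \times W_2$ with the order $\leq$ of Proposition \ref{W-set prop}(iii), transported through the identification $\WJW \cong W_1^J \times W_2$. The construction in \cite[Proposition 2.2]{HY2} builds its order from the componentwise Bruhat orders on $W_1^J$ and $W_2$; by contrast, $\leq$ from Proposition \ref{W-set prop}(iii) quantifies over all representatives $(w_1'', w_2'') \sim (w_1', w_2')$ and asks for componentwise Bruhat comparison with the given $(w_1, w_2)$ where $w_1 \in W_1^J$. First I would pass to the canonical representative on the left, which makes one direction immediate: the HY2 order supplies $w = \text{id} \in W_J$ as a witness for Proposition \ref{W-set prop}(iii). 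The reverse direction reduces to the standard lifting property for parabolic Bruhat order -- if $x \leq uv$ in $W_1$ with $u \in W_1^J$ and $v \in W_J$, then $x^J \leq u$ in $W_1^J$ -- which lets one pass from an arbitrary representative $(w_1'', w_2'')$ satisfying the componentwise Bruhat inequalities to the canonical representative in $W_1^J \times W_2$ while preserving them.

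The final sentence of the proposition is then immediate from the definition of ``ordered'' in \cite[Definition 1.1]{HY2}, which states exactly that $\tP_{\mathbf{z', z}} \neq 0$ forces $\mathbf{z'} \leq \mathbf{z}$ in the ordered structure on $\tE$; this structure coincides with our $\leq$ by the previous step.

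The main obstacle is the order-matching in the middle paragraph. Tracking the Bruhat comparisons simultaneously on both coordinates as one passes between representatives via the relations $(w_1 w, w_2) \sim (w_1, w w_2)$ for $w \in W_J$ is where real work happens; reducing to the case of $w$ a simple reflection in $J$ and iterating should give the cleanest route. A secondary (and smaller) check is that the inductive/algorithmic description of the HY2 order really does agree with the componentwise Bruhat description on $W_1^J \times W_2$, which amounts to unwinding the proof of \cite[Proposition 2.2]{HY2} and matching it with Proposition \ref{W-set prop}(iii).
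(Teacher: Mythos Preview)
Your proposal is correct and follows essentially the same approach as the paper: invoke \cite[Proposition 2.2]{HY2} for the ordered $W_1$-graph structure, identify the order-matching on $\WJW$ as the substantive step, and read off the conclusion about $\tP_{\mathbf{z',z}}$ from the definition of ``ordered.'' The paper's own proof says only that ``showing the partial orders from \cite{HY2} and Proposition \ref{W-set prop} are equal takes some work'' and otherwise cites \cite{HY2}; your sketch of that work via the parabolic lifting property and reduction to simple reflections in $J$ is a reasonable outline of what the paper leaves implicit.
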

\begin{proof}
Showing the partial orders from \cite{HY2} and Proposition \ref{W-set prop} are equal takes some work. The rest is a citation of results in \cite{HY2}.
\end{proof}

\subsection{}
A similar definition to that in the previous subsection can be given for $\WJJW$. To work with these sets, introduce the following notation.
A representative $(w_1,\ldots,w_d)$ of an element of $\WJJW$ is $\emph{i-stuffed}$ if
\be w_1\in W^{J_1}_1,\ldots,w_{i-1}\in W_{i-1}^{J_{i-1}},\ w_i\in W_i,\ w_{i+1}\in \leftexp{J_i}{W_{i+1}},\ \ldots,\leftexp{J_{d-1}}{W_d}.\ee
It is convenient to represent the element $\mathbf{z}\in \WJJW$, somewhat redundantly, in \emph{stuffed} notation: $\mathbf{z} = (z_1, z_2, \ldots,z_d)$, where $z_i$ is the $i$-th component of the $i$-stuffed expression for $\mathbf{z}$.

\subsection{}
\label{ss TTTC}

The main ideas in this subsection also appear in \cite[\textsection 4]{G2} where they are used to adapt Lusztig's $a$-invariant to give results about the partial order on the cells of $\Res_J \Gamma_W$.

For any $X \subseteq W_1 \times W_2$, define the shorthands
\be
\begin{array}{c}
TT(X) := \left\{T_{w_1}\bt T_{w_2} : (w_1, w_2) \in X \right\},\\
TC(X) := \left\{T_{w_1}\bt \C_{w_2} : (w_1, w_2) \in X \right\},\\
CT(X) := \left\{\C_{w_1}\bt T_{w_2} : (w_1, w_2) \in X \right\}.
\end{array}
\ee
The construction from \textsection \ref{ss IC basis} applied to $\Gamma_{W_2}$ gives the IC basis $\ic_{\tE}(TC({W_1^J} \times W_2))$ of $\tE$.  The next proposition shows that the same canonical basis can be constructed from two other standard bases, and this will be used implicitly in what follows.

\begin{proposition} \label{TCTTprop}
The standard bases

$$ TC({W_1^J} \times W_2),TT({W_1^J} \times W_2) = TT(W_1 \times \JW ), CT(W_1 \times \JW)$$
of $\tE=\HJH$ have the same $A^{-}$-span, denoted $\L$. Moreover,
$$ T_{w_1}\bt \C_{vw_2} \equiv T_{w_1}\bt T_{vw_2} =  T_{w_1v}\bt T_{w_2} \equiv \C_{w_1v}\bt T_{w_2}\mod \ui\L$$
for every $w_1\in {W_1}^J, v\in W_J, w_2\in \JW$. Therefore, the corresponding IC bases are identical:
$$ \ic_{\tE}(TC({W_1^J} \times W_2))=\ic_{\tE}(TT({W_1^J} \times W_2))=\ic_{\tE}(CT(W_1 \times \JW)) $$
(and these will be denoted $\Lambda=\{\C_{w_1,w_2} : (w_1,w_2)\in \WJW \}$).
\end{proposition}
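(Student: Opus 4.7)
The strategy is to leverage the IC basis $\Lambda = \{\tB_{w,\gamma}\}$ produced in \textsection\ref{ss induced W graph} from the standard basis $TC(W_1^J \times W_2)$ and to show that this same $\Lambda$ qualifies as the IC basis for the other two standard bases. The proposition then reduces to three subclaims: (a) the three standard bases share a common $A^-$-span $\L$; (b) the displayed congruences hold modulo $\ui \L$; and (c) a uniqueness-of-IC-basis argument identifies all three IC bases with $\Lambda$.

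For (a), I would first check the set-theoretic equality $TT({W_1^J} \times W_2) = TT(W_1 \times \JW)$ inside $\tE$. Given $w_1 \in W_1^J$ and a reduced factorization $w_2 = v \cdot w_2'$ with $v \in W_J$, $w_2' \in \JW$, the $\H_J$-balanced tensor relation yields
\[
T_{w_1} \bt T_{w_2} \;=\; T_{w_1} T_v \bt T_{w_2'} \;=\; T_{w_1 v} \bt T_{w_2'},
\]
and the decompositions $W_1 = W_1^J \cdot W_J$ and $W_2 = W_J \cdot \JW$ identify both indexing sets with $\WJW$. Equality of the $A^-$-spans of $TC$ and $TT$ then follows because the expansion of $\C_{w_2}$ in $\{T_y : y \leq w_2\}$ is unitriangular over $A^-$, a property preserved by tensoring on the left with $T_{w_1}$. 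The analogous expansion of $\C_{w_1}$, tensored on the right with $T_{w_2}$ for $w_2 \in \JW$ (so no further $\H_J$-rewriting of the result is needed), gives the equal $A^-$-span of $CT(W_1 \times \JW)$.

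For (b), the first congruence $T_{w_1} \bt \C_{vw_2'} \equiv T_{w_1} \bt T_{vw_2'} \pmod{\ui\L}$ is immediate from $\C_{vw_2'} - T_{vw_2'} \in \ui\L$; the middle equality is the tensor relation in the preceding display; the last congruence is symmetric. In particular, chaining these gives $\tB_{w_1,\,vw_2'} \equiv T_{w_1}\bt T_{vw_2'} \equiv T_{w_1 v}\bt T_{w_2'} \equiv \C_{w_1 v} \bt T_{w_2'}\pmod{\ui\L}$, so $\Lambda$ meets the defining property of the IC basis relative to either alternative standard basis. For (c), I would verify the hypothesis of Theorem \ref{t IC basis} directly for $TT$ and $CT$: the needed condition $\br{t_j} \equiv t_j$ modulo lower terms follows from the Bruhat-triangularity of $T_w^{-1}$ together with the $\H_J$-rewriting into the chosen basis, using the partial order of Proposition \ref{W-set prop} as the poset. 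The main obstacle is this last poset check; when expanding $\br{T_x \bt T_{w_2}}$ and moving the $W_J$-part across $\bt$, one must confirm that all resulting indices lie strictly below the starting index in the order from Proposition \ref{W-set prop}. Once this routine check is done, Theorem \ref{t IC basis} provides existence and uniqueness of the IC bases for $TT$ and $CT$, and subclaim (b) identifies both of them with $\Lambda$.
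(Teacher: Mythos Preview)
Your argument is correct, and parts (a) and (b) are exactly what the paper does (the paper is simply terser: it invokes the equality $A^-\{T_{w_2}\}=A^-\{\C_{w_2}\}$ from the definition of an IC basis and says ``the remaining statements are clear''). Your part (c), however, is unnecessary extra work. Once (a) gives a common lattice $\L$ and (b) gives the elementwise congruences, you do not need to re-verify the poset hypothesis of Theorem~\ref{t IC basis} for the $TT$ or $CT$ bases. Existence and uniqueness of the IC basis for $TC(W_1^J\times W_2)$ is already established (Theorem~\ref{t HY canbas exists}), and this transfers immediately: if $\{d_j\}$ were any $\br{\cdot}$-invariant $A^-$-basis of $\L$ with $d_j\equiv t_j^{TT}\bmod \ui\L$, then by (b) also $d_j\equiv t_j^{TC}\bmod \ui\L$, so the uniqueness already in hand forces $d_j$ to be the corresponding element of $\Lambda$. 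Thus the ``main obstacle'' you flag---checking that Bruhat-rewriting across $\bt$ lands below in the order of Proposition~\ref{W-set prop}---never needs to be faced. The paper's route buys a shorter proof; yours buys an independent verification that Theorem~\ref{t IC basis} applies to all three standard bases, which is true but not needed for the proposition.
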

\begin{proof}
The lattices $A^{-}\{T_{w_2}: w_2 \in W_2\}$ and $A^{-}\{\C_{w_2}: w_2 \in W_2\}$ are equal by the definition of an IC basis (\textsection \ref{ss IC basis}).  Thus $A^{-} TC(W_1^J \times W_2) = A^{-} \  TT(W_1^J \times W_2)$ and similarly $A^{-} \   TT(W_1 \times \JW ) = A^{-} \ CT(W_1 \times \JW).$ The remaining statements are clear.

\end{proof}

Now given any lower order ideal $I$ in $\JW$, define $D_{I} = A \ CT(W_1 \times I)$, thought of as an $\H_1$-submodule of $\tE$.  Applying Proposition \ref{p ic lower ideal} to $D_{I} \subseteq \tE$ with poset $W_1 \times \JW$ and lower ideal $W_1 \times D_{I}$ shows that $D_I$ has canonical basis  $\left\{\C_{w_1,w_2} :\ w_1\in W_1, w_2\in I\right\}$ (Proposition \ref{TCTTprop} is used implicitly).  The next theorem now comes easily.

Let $D_{\leq x} = D_{\{w \in \JW: w \leq x\}}$ and $D_{< x} = D_{\{w \in \JW: w < x\}}$.  Recall that $\Gamma_{W_1}$ is the usual $W_1$-graph of the regular representation of $\H_1$.

\begin{theorem}
\label{t HJHcells}
The module $\tE$ (with $W_1$-graph structure $\Lambda$) has a filtration with cellular subquotients that are isomorphic as $W_1$-graphs to $\Gamma_{W_1}$.  In particular, the left cells of $\Lambda$ are isomorphic to those occurring in $\Gamma_{W_1}$.
\end{theorem}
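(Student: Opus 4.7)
The plan is to build the filtration explicitly from the cellular submodules $D_{\leq x}$ defined just before the theorem.  Choose any linear extension $x_0, x_1, \ldots$ of the Bruhat order on $\JW$ and set $I_k := \{x_0, \ldots, x_k\}$, so each $I_k$ is a lower order ideal of $\JW$.  Then $D_{I_0} \subseteq D_{I_1} \subseteq \cdots$ is an exhaustive filtration of $\tE$ by cellular $\H_1$-submodules whose canonical bases, by the discussion preceding the theorem, are $\{\C_{w_1, x_j} : w_1 \in W_1,\ j \leq k\}$.  It suffices to show that each successive subquotient $M_k := D_{\leq x_k}/D_{< x_k}$ is isomorphic to $\Gamma_{W_1}$ as a $W_1$-graph.

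Fix $x \in \JW$, write $M := D_{\leq x}/D_{<x}$, and consider the $A$-linear map $\phi_x : \H_1 \to M$ defined by $T_w \mapsto T_w \bt T_x + D_{<x}$.  Since $D_{\leq x}$ and $D_{<x}$ are $\H_1$-submodules, $M$ inherits an $\H_1$-module structure and $\phi_x$ is tautologically $\H_1$-linear; it sends the standard basis of $\H_1$ bijectively to the $CT$-standard basis of $M$ (indexed by $W_1 \times \{x\} \subseteq W_1 \times \JW$), so $\phi_x$ is an $\H_1$-module isomorphism.  To promote it to a $W_1$-graph isomorphism, I need to verify that $\phi_x(\C_w) = \C_{w,x} + D_{<x}$ for every $w \in W_1$.

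For this, apply the uniqueness part of Theorem~\ref{t IC basis} to $M$ with standard basis $\{\phi_x(T_w) : w \in W_1\}$ and the involution on $M$ induced from $\tE$ (which is well-defined because $D_{\leq x}$ and $D_{<x}$ are $\br{\cdot}$-stable, being spanned by subsets of $\Lambda$).  Proposition~\ref{TCTTprop} gives $\C_{w,x} \equiv T_w \bt T_x \pmod{\ui \L}$, so the class of $\C_{w,x}$ satisfies the required congruence in $M$.  The main obstacle is matching the induced involution on $M$ with the involution on $\H_1$ under $\phi_x$: one needs $\br{T_w \bt T_x} \equiv \br{T_w} \bt T_x \pmod{D_{<x}}$.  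Expanding $\br{T_x} = T_x + \sum_{y<x} a_y T_y$ in $\H_2$ and writing each $y = \lj{y}{J} \cdot \rJ{y}{J}$, this reduces to showing $\rJ{y}{J} < x$ whenever $y < x$ in $W_2$.  The latter follows from the standard compatibility $y \leq x \Rightarrow \rJ{y}{J} \leq \rJ{x}{J} = x$ of the Bruhat order with the $\JW$-coset decomposition, together with the observation that $\rJ{y}{J} = x$ would force $\ell(y) \geq \ell(x)$, contradicting $\ell(y) < \ell(x)$.  Once this is established, uniqueness of the IC basis forces $\phi_x(\C_w) = \C_{w,x} + D_{<x}$, so $\phi_x$ is an isomorphism of $W_1$-graphs $\Gamma_{W_1} \cong M$, and applying this to each subquotient of the filtration completes the proof.
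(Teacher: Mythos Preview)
Your proof is correct and follows the same filtration-by-$D_{\leq x}$ strategy as the paper.  The only real difference is which standard basis you use to identify the IC basis of the subquotient: you work with the $TT$ basis $\{T_w \bt T_x\}$ and therefore have to check that the induced involution on $M$ agrees with that of $\H_1$, which costs you the Bruhat-order argument $y<x \Rightarrow \rJ{y}{J}<x$.  The paper instead uses the $CT$ basis $\{\C_w \bt T_x\}$ (legitimate by Proposition~\ref{TCTTprop}): since $\C_w$ is already $\br{\cdot}$-invariant, $\C_w \bt T_x$ becomes $\br{\cdot}$-invariant in $M$ for free and is already congruent to itself mod $\ui\L$, so the identification $\C_{w,x} + D_{<x} = \C_w \bt T_x + D_{<x}$ is immediate and the map $\pi:\C_w\bt T_x \mapsto \C_w$ visibly carries canonical basis to canonical basis.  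Your route is a perfectly good alternative; the paper's just sidesteps the Bruhat compatibility check.
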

\begin{proof}
For any $x\in \JW$, the map $\pi:D_{\leq x}\to \H_1$ given by $\pi(D_{< x}) = 0$ and $\C_{w}\bt T_x\mapsto\C_{w}$
is an $\H_1$-module homomorphism. Hence the exact sequence
\be \xymatrix{0 \ar[r] & D_{< x} \ar[r] & D_{\leq x} \ar[r]_{\pi}& \H_1 \ar[r]& 0}.\ee
Moreover, $\pi(\tB_{w,x}) = \C_{w}$, which is clear when viewing the $\tB_{w,x}$ as being constructed from the standard basis $CT(W_1\times\JW)$. This gives an isomorphism of $W_1$-graphs $D_{\leq x} / D_{<x} \cong \H_1$.
\end{proof}

Letting $\H$ be the Hecke algebra of $(W, S)$ and setting $\H_1= \H_J$, $\H_2=\H$, $J\subseteq S$, we obtain
\begin{corollary}
The left cells of $\Res_J\H$ are isomorphic as $W_J$-graphs to the left cells of the regular representation of $\H_J$.
\end{corollary}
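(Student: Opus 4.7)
The plan is to derive this as a direct instance of Theorem \ref{t HJHcells}, by identifying $\tE := \H_J \tsr_J \H$ with $\Res_J \H$ as $W_J$-graphs. First I would note that multiplication $h_1 \bt h_2 \mapsto h_1 h_2$ is an $\H_J$-module isomorphism $\tE \xrightarrow{\sim} \H$; under it, since $W_1 = W_J$ forces $W_1^J = \{\id\}$, the parametrizing set $\WJW$ (identified with $W_J \times \JW$) maps bijectively to $W$ via $(w_1,w_2) \mapsto w_1 \cdot w_2$, and the standard basis $TT(W_J \times \JW)$ of $\tE$ is carried to $\{T_w : w \in W\}$, the standard basis of $\H$.

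Next, I would show that the Howlett--Yin canonical basis $\{\C_{w_1,w_2}\}$ of $\tE$ is carried to the Kazhdan--Lusztig basis $\{\C_w\}$ of $\H$. Using the alternative characterization provided by Proposition \ref{TCTTprop}, $\C_{w_1,w_2}$ is the unique $\br{\cdot}$-invariant element of $\tE$ congruent to $T_{w_1} \bt T_{w_2}$ modulo $\ui \L$; under the identification this says exactly that its image is the $\br{\cdot}$-invariant element of $\H$ congruent to $T_{w_1 w_2}$ mod $\ui \L_W$, which is $\C_{w_1 w_2}$ by definition of the Kazhdan--Lusztig basis.

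Once canonical bases are matched, the $W_J$-graph structures on $\tE$ (from Howlett--Yin) and on $\Res_J \H$ (from restricting the Kazhdan--Lusztig $W$-graph along $J$) must coincide: for any fixed $\br{\cdot}$-invariant basis, the descent sets are determined by the condition $\C_s \gamma = [2]\gamma$ ($s \in J$) and the $\mu$-weights are then read off from the action of the $\C_s$ on the basis, and both the $\H_J$-action and the canonical basis agree under the identification. Theorem \ref{t HJHcells} now transfers to give a filtration of $\Res_J \H$ by cellular $W_J$-subgraphs with successive quotients isomorphic to $\Gamma_{W_J}$, whence each left cell of $\Res_J \H$ is isomorphic as a $W_J$-graph to a left cell of $\Gamma_{W_J}$.

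The only point of substance is the matching of canonical bases in the second step, and this is exactly what Proposition \ref{TCTTprop} is set up to handle by letting one work with whichever standard basis is most convenient; everything else is bookkeeping in the specialization $W_1 = W_J$, $W_2 = W$ of the setup of \textsection\ref{s H1H2}.
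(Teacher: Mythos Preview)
Your argument is correct and follows the same route as the paper, which simply specializes Theorem \ref{t HJHcells} by setting $\H_1=\H_J$, $\H_2=\H$; you have just made explicit the identification of $\H_J\tsr_J\H$ (with its Howlett--Yin $W_J$-graph) with $\Res_J\H$ (with the restricted Kazhdan--Lusztig $W_J$-graph), which the paper leaves implicit. One small remark: since you already note $W_1^J=\{\id\}$, the canonical basis of $\tE$ is immediately $\{\tB_{\id,\C_w}=1\bt\C_w\}$, so invoking Proposition \ref{TCTTprop} is not strictly necessary---but it does no harm.
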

This corollary is implied by results from \cite[\textsection 5]{HY2}, but the method of proof is different. It is also a consequence of \cite[Theorem 5.2]{R}.
%

By the same methods we can check that the canonical basis construction for induced modules is well-behaved for nested parabolic subgroups.

\begin{proposition}\label{p nested parabolic}
Let $\H$ be the Hecke algebra of $(W,S)$, $J_2 \subseteq J_1 \subseteq S$, $E$ a left $\H_{J_2}$-module with involution $\br{\cdot}$ intertwining that of $\H_{J_2}$, and $\Gamma$ a $\br{\cdot}$-invariant basis of $E$ (like the setup in \textsection \ref{ss IC basis}). Let $\Lambda_{J_1}=\ic_{\H_{J_1}\tsr E}(\{\T_{w,\gamma}:w\in W_{J_1}^{J_2},\gamma\in\Gamma\})$. Then, putting $\tE = \H_{J_2} \tsr E$, we have
\be \label{e nested parab}\ic_{\tE}(\{T_w\bt \gamma:w\in W^{J_2},\gamma\in\Gamma\})=\ic_{\tE}(\{T_w\bt \delta:w\in W^{J_1},\delta\in\Lambda_{J_1}\}). \ee
\end{proposition}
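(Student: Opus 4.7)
The plan is to show that the two canonical bases in (\ref{e nested parab}) are literally the same subset of $\tE$, merely relabeled through the natural bijection $(w,v)\mapsto wv$ between $W^{J_1}\times W_{J_1}^{J_2}$ and $W^{J_2}$. The key tool is the uniqueness property underlying Theorem \ref{t IC basis}: within any fixed $A^-$-lattice $\L$ spanned by a bar-invariant basis, any bar-invariant element lying in $\ui\L$ must vanish, so a bar-invariant element congruent to a given element modulo $\ui\L$ is unique. I will take the IC basis $\{\C_{u,\gamma}\}=\ic_{\tE}(\{T_u\bt\gamma:u\in W^{J_2},\gamma\in\Gamma\})$ already given on the left and show that, after relabeling, each element also satisfies the defining congruence of the right-hand IC basis.

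First, I would verify the length-additive bijection $W^{J_1}\times W_{J_1}^{J_2}\xrightarrow{\sim}W^{J_2}$, $(w,v)\mapsto wv$. Given $u\in W^{J_2}$ with its unique factorization $u=u_1u_2$, $u_1\in W^{J_1}$, $u_2\in W_{J_1}$, the hypothesis $us>u$ for $s\in J_2\subseteq J_1$ combined with length-additivity forces $u_2 s>u_2$, so $u_2\in W_{J_1}^{J_2}$; conversely, for $(w,v)\in W^{J_1}\times W_{J_1}^{J_2}$ one checks $wv\in W^{J_2}$. Consequently $T_u=T_w T_v$ in $\H$ whenever $u=wv$. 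Combining this with the defining property $\C^{J_1}_{v,\gamma}\equiv T_v\bt\gamma\mod\ui A^-\{T_{v'}\bt\gamma':v'\in W_{J_1}^{J_2},\gamma'\in\Gamma\}$ of the IC basis $\Lambda_{J_1}$, I would establish the central identity
\[
T_w\bt\C^{J_1}_{v,\gamma}\equiv T_{wv}\bt\gamma\mod\ui\L,\qquad\L:=A^-\{T_u\bt\gamma:u\in W^{J_2},\gamma\in\Gamma\},
\]
by expanding $\C^{J_1}_{v,\gamma}$ in its standard basis and applying $T_w\bt(-)$, using $T_w T_{v'}=T_{wv'}$ with $wv'\in W^{J_2}$. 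This computation also gives the inclusion $A^-\{T_w\bt\delta:w\in W^{J_1},\delta\in\Lambda_{J_1}\}\subseteq\L$, and inverting the unitriangular change of basis yields the opposite inclusion, so the two standard bases in (\ref{e nested parab}) span the same $A^-$-lattice $\L$.

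With the lattices identified, the conclusion is formal. Each $\C_{u,\gamma}$ is bar-invariant, lies in $\L$, and satisfies $\C_{u,\gamma}\equiv T_u\bt\gamma\mod\ui\L$; rewriting $u=wv$ and invoking the key congruence yields $\C_{wv,\gamma}\equiv T_w\bt\C^{J_1}_{v,\gamma}\mod\ui\L$. Hence the family $\{\C_{wv,\gamma}\}$ is a bar-invariant $A^-$-basis of $\L$ with each element congruent modulo $\ui\L$ to the correspondingly labeled element of the right-hand standard basis, so by uniqueness $\{\C_{wv,\gamma}\}$ must coincide with $\ic_{\tE}(\{T_w\bt\delta:w\in W^{J_1},\delta\in\Lambda_{J_1}\})$, proving (\ref{e nested parab}).

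The main obstacle I anticipate is verifying that the alternative standard basis $\{T_w\bt\delta:w\in W^{J_1},\delta\in\Lambda_{J_1}\}$ actually satisfies the hypothesis of Theorem \ref{t IC basis}, so that its IC basis is a priori well-defined. The natural choice is to transfer the partial order from $W^{J_2}\times\Gamma$ through the bijection of the first step, but confirming that $\br{T_w\bt\delta}\equiv T_w\bt\delta$ modulo the $A$-span of strictly lower elements requires tracking how $\br{T_w}-T_w\in A\{T_{w'}:w'<w\}$ interacts with $\delta$ after each $w'$ is factored as $w'_1 w'_2\in W^{J_1}\cdot W_{J_1}$ and $T_{w'_2}\delta$ is re-expanded in $\Lambda_{J_1}$; one needs that this re-expansion does not escape the transferred lower order ideal.
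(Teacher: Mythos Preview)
Your proof is correct and follows essentially the same route as the paper: show that the two standard bases span the same $A^-$-lattice $\L$ and that $T_w\bt\C^{J_1}_{v,\gamma}\equiv T_{wv}\bt\gamma\bmod\ui\L$, then invoke uniqueness of the IC basis. The paper does exactly this, but abbreviates the lattice-and-congruence step by citing Proposition~\ref{TCTTprop} (``by the same argument as\ldots'') and then only spells out the coset bijection $W^{J_1}\times W_{J_1}^{J_2}\cong W^{J_2}$, which you also verify.

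Your final ``obstacle'' is not one. You do not need to check the poset hypothesis of Theorem~\ref{t IC basis} for the right-hand standard basis: that theorem is a \emph{sufficient} condition for existence, not part of the definition of an IC basis. Once the lattices coincide, uniqueness of a bar-invariant basis with the prescribed congruences is automatic (a bar-invariant element of $\ui\L$ vanishes), and existence is witnessed directly by the left IC basis $\{\C_{wv,\gamma}\}$, which you have already shown satisfies the right-hand congruences. So the right IC basis is well-defined and equal to the left one without any appeal to a partial order on $W^{J_1}\times\Lambda_{J_1}$.
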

\begin{proof}
By the same argument as in Proposition \ref{TCTTprop}, the right-hand side of (\ref{e nested parab}) can also be constructed from the standard basis
$\{T_{v_1}\bt T_{v_2}\bt \gamma:v_1\in W^{J_1},v_2\in W_{J_1}^{J_2},\gamma\in\Gamma\}$. It remains to check that $W^{J_1} \times W_{J_1}^{J_2} = W^{J_2}$ by $(v_1,v_2) \mapsto v_1 v_2$.  As $v_1$ ranges over left coset representatives of $W_{J_1}$ and $v_2$ over left coset representatives of $W_{J_2}$ inside $W_{J_1}$, $v_1 v_2$ ranges over left coset representatives of $W_{J_2}$ in $W$ (true for any pair of nested subgroups in a group). To see that $v_1 v_2$ is a minimal coset representative, let $x \in W_{J_2}$; then $v_2 \cdot x$ is a reduced factorization and $v_2 x \in W_{J_1}$ (and $v_1$ minimal in $v_1 W_{J_1}$) implies $v_1 \cdot v_2 x$ is a reduced factorization and thus so is $v_1 \cdot v_2 \cdot x$.
\end{proof}

\subsection{}
\label{ss locallabels}
 \newcommand{\ce}{\ensuremath{\Upsilon}}
The set of cells of a $W$-graph $\Gamma$ is denoted $\Cell(\Gamma)$.
We will describe the cells of $\HJJH$ using the results of the previous subsection \textsection \ref{ss TTTC}.

Let $\ce$ be a cell of $\HJH$.  By Theorem \ref{t HJHcells} and its proof, $\ce = \{\C_{w_1,x_2}: w_1 \in \ce'\}$ for some cell $\ce'$ of $\Gamma_{W_1}$ and $x_2 \in \JW$.  We say that $\ce'$ is the \emph{local label} of $\ce$.  By Theorem \ref{t HJHcells}, the cells $\ce$ and $\ce'$ are isomorphic as $W_1$-graphs so that the isomorphism type of a cell is determined by its local label. Thus $\Cell(\HJH)$ has a description via the bijection $\Cell(\HJH) \cong \Cell(\H_1)\times \JW$, $\ce \mapsto (\ce',x_2)$, taking a cell to its local label and an element of $\JW$.  Unfortunately, from this description it is difficult to determine the cells of a cellular subquotient $\H_1\tsr_{J} A\Gamma$ of $\HJH$ for some $\Gamma \in \Cell(H_2)$ (this is a cellular subquotient of $\HJH$ by Proposition \ref{CBSubquotientprop}).

 Essentially the same argument used in Theorem \ref{t HJHcells} yields a similar expression for the general case:
\be \label{e bad cell label} \Cell(\HJJH) \cong \Cell(\H_d)\times \leftexp{J_{1}}W_2\times\ \ldots \times\ \leftexp{J_{d-1}}W_d,\ee taking a cell to its local label and a tuple of right coset representatives.  This of course has the same drawback of it being difficult to identify the subset of cells obtained by taking a cellular subquotient of $\H_d$.  We now address this deficiency.

Put $\tE^k = \H_{d-k}\tsr_{J_{d-k}}\ldots\tsr_{J_{d-1}}\H_{d}$. The collection of cells $\coprod_{k=0}^{d-1} \Cell(\tE^k)$ can be pictured as vertices of an acyclic graph $G$ (see Figure \ref{f VVe+} of \textsection \ref{ss reduced non-reduced}). The subset $\Cell(\tE^k)$ of vertices is the \emph{kth level} of $G$. There is an edge between $\ce^k$ of level $k$ and $\ce^{k+1}$ of level $k+1$ if $\ce^{k+1}\in \Cell(\H_{d-(k+1)}\tsr_{J_{d-(k+1)}}\ce^k)$. Here we are using Proposition \ref{CBSubquotientprop} to identify $\H_{d-(k+1)}\tsr_{J_{d-(k+1)}}\ce^k$ with a cellular subquotient of $\tE^{k+1}$.  Note that from a vertex of level $k+1$ there is a unique edge to a vertex of level $k$ since the cells of a module $\tE^k$ are the composition factors of a composition series for $\tE^k$, thereby yielding composition factors for the induced module of $\tE^{k+1} = \H_{d-(k+1)}\tsr_{J_{d-(k+1)}}\tE^k$.


A vertex $\ce^k$ in the $k$-th level of $G$ has a unique path to a vertex $\ce^0$ in the $0$-th level. The local labels $(\Gamma^k,\ldots,\Gamma^0)$ of the vertices in this path is the \emph{local sequence} of $\ce^k$ (where $\Gamma^i$ is the local label of the vertex in the $i$-th level).

The cell of $\tE^{d-1}$ containing $\tB_{\mathbf{z}}, \ \mathbf{z}\in\WJJW$ is the end of a path with local labels $(\Gamma_{1},\ldots,\Gamma_d)$, where $\Gamma_i\in \Cell(\Gamma_{W_{i}})$ is the cell containing $\C_{z_i}$ and $(z_1,\ldots,z_d)$ is stuffed notation for $\mathbf{z}$.

A local sequence $(\Gamma^{d-1},\ldots,\Gamma^0)$ does not in general determine a cell of $\tE^{d-1}$ uniquely.  For instance, the cells of $\H_{J} \tsr_{J} \H$ with $J = \emptyset$ are just single canonical basis elements of $\H$, so a local sequence does not determine a cell unless the cells of $\H$ are of size $1$.  We say that the tuple $(\tE^{d-1}, \ldots, \tE^0)$ is \emph{weakly multiplicity-free} if there is at most one cell of $\tE^{d-1}$ with local sequence $(\Gamma^{d-1},\ldots,\Gamma^0)$ for all $\Gamma^i \in \Cell(\Gamma_{W_{d-i}})$.  Pure induction $(\H \tsr_J \H_J, \H_J)$ is trivially weakly multiplicity-free since the local label of a cell in $\H \tsr_J \H_J = \H$ is the same thing as the cell itself.  It is not hard to see that $(\tE^{d-1}, \ldots, \tE^0)$ is weakly multiplicity-free if and only if the restriction $(\H_{J_i} \tsr_{J_i} \H_{i+1}, \H_{i+1})$ is for all $i$.

We have seen that the restriction $(\H_J \tsr_J \H, \H)$ is not always weakly multiplicity-free, but a natural question is whether it always is for $J$ of size $|S|-1$.  This fails for $W$ of type $B_2$ and $B_3$ for all choices of $J$ (and presumably for $B_n$, $n > 3$).  This failure may only be because cells in type $B$ do not always correspond to irreducible modules, so this question should be investigated in the unequal parameter setting.  We conjecture the following for type $A$.

\begin{conjecture}\label{c weak mult free}
  If \H is the Hecke algebra of $(W,S) = (\S_n, \{s_1,\ldots,s_{n-1}\})$ and $|J| = |S|-1$, then the restriction $(\H_J \tsr_J \H, \H)$ is weakly multiplicity-free.
\end{conjecture}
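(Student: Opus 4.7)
The conjecture naturally splits by whether the $\S_n \downarrow W_J$ branching is multiplicity-free: it is in the ``one-sided'' case $W_J \cong \S_{n-1}$ (with $J = \{s_1,\ldots,s_{n-2}\}$ or $\{s_2,\ldots,s_{n-1}\}$), and more generally whenever $\min(a,b) \leq 2$ in $W_J \cong \S_a \times \S_b$ via Pieri, but it can fail when $\min(a,b) \geq 3$ (the smallest instance being $n = 6$, $W_J = \S_3 \times \S_3$, where $c^{(3,2,1)}_{(2,1),(2,1)} = 2$). The first case yields the conjecture directly; the second is the real obstacle.

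In the multiplicity-free case, my plan is to unwind the definitions using the fact that $W_1^J = \{e\}$ (since the inclusion $i_1:J\hookrightarrow S_1$ is the identity), so the standard basis $\{T_e\bt \C_w\}$ of $\H_J \tsr_J \Gamma(Q^0)$ is already bar-invariant and coincides with its own IC basis. Via $\H_J\tsr_{\H_J}\Gamma(Q^0)\cong\Gamma(Q^0)$, this identifies $\H_J\tsr_J\Gamma(Q^0)$, as a $W_J$-graph, with the $\H_J$-restriction of $\Gamma(Q^0)$, so cells of $\tE^1$ with local sequence $(Q^1,Q^0)$ are exactly cells of $\Res_J\Gamma(Q^0)$ with local label $Q^1$. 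By Theorem~\ref{t HJHcells} such a cell is isomorphic as a $W_J$-graph to the $\Gamma_{W_J}$-cell labeled by $Q^1$; extending scalars to $\QQ(u)$ this is an irreducible $\H_J$-module whose shape is read off from $\sh(Q^1)$. A cellular filtration then matches the multiset of label-shapes of cells of $\Res_J\Gamma(Q^0)$ with the multiset of composition-factor shapes of $\Res_J V_\lambda$ (for $\lambda=\sh(Q^0)$), and multiplicity-freeness of branching forbids two cells from sharing a label shape, and a fortiori from sharing a local label.

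In the genuinely reducible case $\min(a,b)\geq 3$, Littlewood--Richardson multiplicities can exceed one, so the cellular-filtration argument above only bounds the number of shape-$(\alpha,\beta)$ cells by $c^\lambda_{\alpha,\beta}$, not by one per local label. My plan would then be to establish, mirroring Proposition~\ref{p cells of induce} on the induction side, an explicit bijection between cells of $\Res_J\Gamma(Q^0)$ of shape $(\alpha,\beta)$ and LR tableaux of shape $\lambda/\alpha$ with content $\beta$, and to compute the local label of each cell algorithmically from its LR tableau together with $Q^0$ (plausibly via some jeu-de-taquin procedure on sub-tableaux of $Q^0$); the conjecture would then reduce to injectivity of the resulting local-label assignment. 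This combinatorial description of Kazhdan--Lusztig cells in restriction is the crux and the main obstacle, and it does not follow from numerical branching alone. The paper's remark that types $B_2,B_3$ fail ``because cells in type $B$ do not always correspond to irreducible modules'' is the structural heuristic that in type $A$ local labels should be fine enough to separate the $c^\lambda_{\alpha,\beta}$ copies of $V_\alpha\boxtimes V_\beta$; computer verification starting at $n=6$, $W_J = \S_3\times\S_3$ would be the prudent first step before attempting a general proof.
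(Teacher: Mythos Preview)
The statement you are attempting to prove is labeled in the paper as a \emph{conjecture}; the paper does not prove it. The only evidence offered is computational: verification for $n=10$, $J=S\setminus\{s_5\}$ via Magma, and spot checks for $n=16$ on particular cells.

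Your argument for the multiplicity-free branching case ($\min(r,n-r)\leq 2$, where $W_J\cong\S_r\times\S_{n-r}$) is correct and is more than the paper provides. However, this does not settle the conjecture, which concerns all $J$ with $|J|=|S|-1$, hence all $r\in\{1,\dots,n-1\}$. As you yourself acknowledge, the substantive content lies in the case $\min(r,n-r)\geq 3$, where Littlewood--Richardson multiplicities can exceed one; there you only outline a plan.

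On that plan: the paper explicitly remarks that the conjecture ``does not seem to be amenable to typical RSK, jeu de taquin style combinatorics.'' In fact the description of local labels of cells of $\Res_J A\Gamma$ in \textsection\ref{ss restrict cells} already realizes steps (1) and (2) of your plan: it parametrizes the cells by the set $X$ of pairs $(T,T')$ with $\jdt(TT')=P$ and computes the local label by the evident projection to $\Tab_{1^r0^{n-r}}\times\Tab_{0^r1^{n-r}}$. The conjecture is precisely that this projection is injective, and the author reports being unable to prove it by these methods. Your proposal therefore does not close the gap; it restates the open problem in the same terms the paper already uses.
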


This conjecture was verified for $n=10$, $J=S\backslash \{s_5\}$ using Magma, and for $n=16$ and a few arbitrary choices of a cell $\Gamma$, we checked that $(\H_J \tsr_J \Gamma, \Gamma)$ is weakly multiplicity-free. Strangely, it does not seem to be amenable to typical RSK, jeu de taquin style combinatorics.  See \textsection\ref{ss restrict cells} for more about the combinatorics involved here.

\section{Tableau combinatorics}
\label{s tableau combinatorics}
\subsection{}
We will make the description of cells from the previous section combinatorially explicit in the case $W = \S_n$. In this section fix $S = \{s_1,\ldots,s_{n-1}\}$ and $\H$ the Hecke algebra of type $A_{n-1}$. As is customary, we will think of an element of $\S_n$ as a word of length $n$ in the numbers $1,\ldots,n$. We want to maintain the convention used thus far of looking only at left $\H$-modules, however tableau combinatorics is  a little nicer if a right action is used.  To get around this, define the word associated to an element $w = s_{i_1} s_{i_2} \dots s_{i_k} \in W$ to be $w^{-1}(1) \ w^{-1}(2) \dots w^{-1}(n)$, where (to be completely explicit) $w^{-1}(i) = s_{i_k} s_{i_{k-1}} \dots s_{i_1}(i)$ and $s_j$ transposes $j$ and $j+1$. The left descent set of $w\in \S_n$ is $\{s_i :\ w^{-1}(i) > w^{-1}(i+1) \}$.

The RSK algorithm gives a bijection between $\S_n$ and pairs of standard Young tableau (SYT) of the same shape sending $w\in \S_n$ to the pair $(P(w),Q(w))$, written $w\xrightarrow{RSK} (P(w),Q(w))$, where $P(w)$ and $Q(w)$ are the insertion and recording tableaux of the word of $w$ (equal to $w^{-1}(1)\ w^{-1}(2)\dots w^{-1}(n)$ by our convention). As was shown in \cite{KL}, the left cells of $\H$ are in bijection with the set of SYT and the cell containing $\C_w$ corresponds to the insertion tableau of $w$ under this bijection. The cell containing those $\C_w$ such that $w$ has insertion tableau $P$ is the cell \emph{labeled by} $P$.  Note that the shape of the tableau labeling a cell is the transpose of the usual convention for Specht modules, i.e. the trivial representation is labeled by the tableau of shape $1^n$, sign by the tableau of shape $n$.

For the remainder of this paper let $r \in \{1,\ldots,n-1\}$, $J_r = \{s_1,\ldots,s_{r-1}\}$, $J'_{n-r}  = \{s_{r+1},\ldots,s_{n-1}\}$, and $J = J_r \cup J'_{n-r}$.

\subsection{}
\label{ss induced cells}
Let $\Gamma$ be a cell of $W_J$ labeled by a pair of insertion tableaux $(T,T') \in \Tab_{1^r0^{n-r}}\times\Tab_{0^r1^{n-r}}$, where $\Tab_\alpha$ is the set of tableau with $\alpha_i$ entries equal to $i$. Here we are using the easy fact, proven carefully in \cite{R}, that a cell of $\Gamma_{W_1\times W_2}$ is  the same as a cell of $\Gamma_{W_1}$ and one of $\Gamma_{W_2}$.  We will describe the cells of $\H \tsr_J A\Gamma$.

For any $w\in W$, in the notation of \textsection \ref{ss coxeter group}, $\rj{w}{J} = (a, b)\in W_{J_r}\times W_{J'_{n-r}}$, where $a$ (resp. $b$) is the permutation of numbers $1,\ldots,r$ (resp. $r+1, \ldots,n$) obtained by taking the subsequence of the word of $w$ consisting of those numbers. For example, if $n=6$, $w = 436125$, and $r = 3$, then $a = 312$ and $b = 465$.

The induced module $\H\tsr_{J}A\Gamma$ has canonical basis $\{\C_w: P(\rj{w}{J}) = (T,T')\}$, where we define $P(a, b)$ for $(a,b)\in W_{J_r}\times W_{J'_{n-r}}$ to be $(P(a),P(b))$.
For any tableau $P$, let $\jdt(P)$ denote the unique straight-shape tableau in the jeu de taquin equivalence class of $P$. From the most basic properties of insertion and jeu de taquin it follows that if $\rj{w}{J} = (a, b)$, then $P(w)_{\leq r} = P(a),\ P(w)_{> r} = \jdt(P(b))$, where $P_{\leq r}$ (resp. $P_{> r}$) is the (skew) subtableau of $P$ with entries $1,\ldots,r$ (resp. $r+1,\ldots,n$).   See, for instance, \cite[A1.2]{St} for more on this combinatorics.  We now have the following description of cells.
\begin{proposition}
\label{p cells of induce}
With $\Gamma$ labeled by $T,T'$ as above, the cells of $\H\tsr_{J}A\Gamma \subseteq \H$ are those labeled by  $P$ such that $P_{\leq r} = T$, $\jdt(P_{> r}) = T'$.
\end{proposition}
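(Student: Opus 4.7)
The plan is to combine the canonical basis description of $\H\tsr_J A\Gamma$ worked out in the paragraph preceding the proposition with the Kazhdan--Lusztig classification of left cells of $\H$ in type $A$. By Proposition~\ref{CBSubquotientprop} (together with Proposition~\ref{TCTTprop}, which identifies $\H\tsr_J \H_J$ with $\H$), the canonical basis of $\H\tsr_J A\Gamma$, viewed inside $\H$, is $\{\C_w : P(\rj{w}{J}) = (T,T')\}$. Writing $\rj{w}{J} = (a,b) \in W_{J_r} \times W_{J'_{n-r}}$ and using the insertion/jeu-de-taquin identity $P(a) = P(w)_{\leq r}$, $P(b) = \jdt(P(w)_{>r})$ recalled just before the statement, this condition is equivalent to $P(w)_{\leq r} = T$ and $\jdt(P(w)_{>r}) = T'$. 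Hence the canonical basis of $\H\tsr_J A\Gamma$ is exactly $\{\C_w : P(w)_{\leq r} = T,\ \jdt(P(w)_{>r}) = T'\}$.

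Next, I would observe that this condition depends only on $P(w)$, not on $w$ itself. By the Kazhdan--Lusztig theorem for $\S_n$ (recalled at the start of the section), the left cells of $\H$ are precisely the sets $\{\C_w : P(w) = P\}$, one for each standard Young tableau $P$ of size $n$. Therefore the canonical basis of $\H\tsr_J A\Gamma$ is a disjoint union of full left cells of $\H$, namely those labeled by tableaux $P$ satisfying $P_{\leq r} = T$ and $\jdt(P_{>r}) = T'$.

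To conclude that these are exactly the left cells of $\H\tsr_J A\Gamma$, I would invoke the standard observation that for a cellular subquotient $M = B/A$ of $\H$, the left cell preorder on $M$ is the restriction of the preorder on $\H$, and both $A$ and $B$ are down-sets. Consequently no left cell of $\H$ meeting $M$ is split, and the cells of $M$ as an $\H$-module coincide with the cells of $\H$ contained in $M$. Applied to $M = \H\tsr_J A\Gamma$, this yields the desired description. The argument is essentially a reindexing of the canonical basis from $\rj{w}{J}$ to $P(w)$, so there is no real obstacle; the only mildly delicate point is the bookkeeping observation that cellular subquotients do not split left cells, which follows immediately from the down-set characterization of cellular submodules.
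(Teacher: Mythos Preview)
Your proof is correct and follows essentially the same route as the paper. The paper's argument is given in the paragraph immediately preceding the proposition: it identifies the canonical basis of $\H\tsr_J A\Gamma$ inside $\H$ as $\{\C_w : P(\rj{w}{J}) = (T,T')\}$ (via Proposition~\ref{CBSubquotientprop} and the identification $\H\tsr_J\H_J\cong\H$), invokes the insertion/jeu de taquin identity to rewrite this as a condition on $P(w)$, and then appeals to the Kazhdan--Lusztig classification of left cells. Your added remark that cellular subquotients do not split left cells is implicit in the paper's setup of \textsection\ref{s cells}, so you have simply made explicit what the paper treats as understood.
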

\begin{example}
Let $n=6$, $r=3$,  and $T,T'=\left({\tiny\young(12,3),\young(46,5)}\right)$. Then the cells of $\H\tsr_{J}A\Gamma$ are labeled by
$$ {\tiny\young(1246,35)},\ {\tiny\young(124,356)},\ {\tiny\young(1246,3,5)},\ {\tiny\young(126,34,5)},\ {\tiny\young(124,36,5)},\ {\tiny\young(12,34,56)},\ {\tiny\young(126,3,4,5)},\ {\tiny\young(12,36,4,5)}.$$
\end{example}

This is, of course, the Littlewood-Richardson rule. The combinatorics of the Littlewood-Richardson rule matches beautifully with the machinery of canonical bases.   This version of the Littlewood-Richardson is due to Sch\"utzenberger and its connection with canonical bases was also shown in \cite{R}.

Let $V_{\lambda}$ be the Specht module corresponding to the partition $\lambda$, and put $\mu = \sh(T),\ \nu=\sh(T')$.
It was established in \cite{KL} that all left cells of $\H$ isomorphic at $\u=1$ to $V_{\lambda}$ are isomorphic as $W$-graphs.  This, together with the fact that the $W$-graph of Theorem \ref{t HY} depends only on the isomorphism type of the $W_J$-graph $\Gamma$, shows that the multiplicity of $V_{\lambda}$ in $\ind^W_{W_J}
(V_{\mu}\boxtimes V_{\nu})$ is given by the combinatorics above and is independent of the chosen insertion tableaux $T, T'$.

\subsection{}
\label{ss restrict cells}
Let $\Gamma$ be a cell of $\H$ labeled by $P$  with $\sh(P) = \lambda$.  We will describe the cells of $\Res_J A \Gamma$.

For any $w\in W$, $\lj{w}{J} = (a, b)\in W_{J_r}\times W_{J'_{n-r}}$, where $a$ (resp. $b$) is the permutation of numbers $1,\ldots,r$ (resp. $r+1, \ldots,n$) with the same relative order as $w^{-1}(1) \ w^{-1}(2) \dots w^{-1}(r)$ (resp. $w^{-1}(r+1) \dots w^{-1}(n)$).  For example, if $n=6$, $w = 436125$, and $r = 3$, then $a = 213$ and $b = 456$.

Specifying a cell $\ce$ of $\Res_J \H$ is equivalent to giving $x \in \leftexp{J}{W}$ and  $(T,T') \in \Tab_{1^r0^{n-r}}\times\Tab_{0^r1^{n-r}}$.    Under this correspondence, $\ce = \{\C_w: P(\lj{w}{J}) = (T,T'), \rJ{w}{J} = x\}$.

 Given $\mu\vdash r,\nu\vdash n-r$, define
 \be \mu\sqcup\nu =(\nu_1+\mu_1,\nu_2+\mu_1,\ldots,\nu_{\ell(\nu)}+\mu_1,\mu_1,\mu_2,\ldots,\mu_{\ell(\mu)}), \ee
  where $\ell(\mu)$ is the number of parts of $\mu$.

Expressing the tableaux on $1,\ldots,r$ and $r+1,\ldots,n$ that label the cells of $\Res_JA\Gamma$ in terms of $P$ is tricky: first define the set
\be X := \{(T,T') :\ |T| = r, |T'| = n-r, \jdt(T T') = P\},\ee
where $T T'$ is the tableau of shape $\mu \sqcup\nu / \rho$ ($\sh(T)=\mu$, $\sh(T')=\nu$, $\rho = {\mu_1}^{\ell(\nu)}$) obtained  by adding $T'$ to  the top right of $T$.
The multiset of local labels of the cells of $\Res_JA\Gamma$ (Conjecture \ref{c weak mult free} says this is actually a set) is obtained by projecting each element of $X$ onto the set $\Tab_{1^r0^{n-r}}\times\Tab_{0^r1^{n-r}}$ by replacing the entries of $T$ (resp. $T'$) by $1,\ldots,r$ (resp. $r+1,\ldots,n$) so that relative order is preserved.

\begin{example}
If $n=6$, $r=3$, and $P={\tiny\young(125,36,4)}$, then $X$ is
$$\left\{ \left({\tiny\young(36,4),\young(125)}\right), \left({\tiny\young(1,3,4),\young(25,6)}\right), \left({\tiny\young(16,4),\young(25,3)}\right), \left({\tiny\young(146),\young(25,3)}\right), \left({\tiny\young(13,4),\young(25,6)}\right), \left({\tiny\young(13,4),\young(2,5,6)}\right) \right\}.$$
Hence the cells of $\Res_JA\Gamma$  have local labels
$$\left({\tiny\young(13,2),\young(456)}\right), \left({\tiny\young(1,2,3),\young(45,6)}\right), \left({\tiny\young(13,2),\young(46,5)}\right), \left({\tiny\young(123),\young(46,5)}\right), \left({\tiny\young(12,3),\young(45,6)}\right), \left({\tiny\young(12,3),\young(4,5,6)}\right).$$
\end{example}

A slightly better description of the cells of $\Res_J A\Gamma$  is as follows.  Fix $\mu \vdash r$, $\nu \vdash n-r$ such that $\lambda \subseteq \mu \sqcup \nu$, and $B$ a tableau of the rectangle shape $\rho := {\mu_1}^{\ell(\nu)}$. Now consider the jeu de taquin growth diagrams with lower left row corresponding to $P$, lower right row corresponding to $B$, and the partition at the top equal to $\mu \sqcup \nu$ (see, e.g., \cite[A1.2]{St}).   The upper right row of such a growth diagram necessarily corresponds to some $T T'$ such that $\jdt(TT') = P$, and the upper left row corresponds to some $A$ such that $\jdt(A) = B$.  Since a growth diagram is constructed uniquely from either of its sides, we obtain the bijection
\be \left\{(T,T') :\ \sh(T)=\mu,\sh(T')=\nu,\jdt(T T') = P\right\} \cong \{A :\ \sh(A) = \mu\sqcup\nu/\lambda,\jdt(A) = B\}.\ee
>From an $A$ in the set above, one obtains the corresponding $(T,T')$ as follows: perform jeu de taquin to $P$ in the order specified by the entries of $A$ to obtain a tableau of shape $\mu\sqcup\nu /\rho$; split this into a tableau of shape $\mu$ and one of shape $\nu$. This can be used to give another description of the set $X$.  This description has the advantage that the same choice of $B$ can be used for all tableau $P$  of shape $\lambda$.

\subsection{}
\label{ss sb}
If $r=1$ or $r = n-1$, then restricting and inducing are multiplicity-free. Therefore, we only need to keep track of the shapes of the tableaux rather than the tableaux themselves, except at the first step $\Cell(\H_d)$, in order to determine a cell of $\HJJH$.   However, it is often convenient for working concrete examples to keep track of all tableaux.

If $r=1$ or $r=n-1$, then
the cells of $\Res_JA\Gamma$, with $\Gamma$ labeled by $P$, can be described explicitly.  If $r=1$ (resp. $r=n-1$), they are labeled by the tableaux obtained from $P$ by column-uninserting (resp. row-uninserting) an outer corner
and replacing the entries of the result with $2,\ldots,n$ (resp. $1,\ldots,n-1$) so that relative order is preserved.

 We will work with both $r=1$ and $r=n-1$ in this paper because tableau combinatorics is easier with $r=n-1$, but $r =1$  is preferable for our work in  \textsection\ref{s tensor V} and beyond.  It is therefore convenient to be able to go back and forth between these two conventions.

On the level of algebras, this is done by replacing any $\H_K$-module by the $\H_{w_0 K w_0}$-module obtained by twisting by the isomorphism $\H_{w_0 K w_0} \cong \H_K, T_{s_i} \mapsto T_{s_{n-i}}$, where $w_0$ is the longest element of $W$.
Combinatorially, this corresponds to replacing a word $x_1 x_2 \dots x_n$ with $x^\sharp := n+1-x_1\ n+1-x_2 \dots n+1-x_n$.  The local label of a cell changes from $T$ to $\evac(T)$, where $T \mapsto \evac(T)$ is the Sch\"utzenberger involution (see, e.g., \cite[A1.2]{St}).  More precisely, the local label $(T,T') \in \Tab_{1^j0^{n-j}}\times\Tab_{0^j1^{n-j}}$ of a cell of an $\H_{S\backslash s_j}$-module becomes $(\evac(T')^*,\evac(T)^*)$, where $\evac(T')^*$ (resp. $\evac(T)^*$) is obtained from $\evac(T')$ by adding a constant to all entries so that $\evac(T')^* \in \Tab_{1^{n-j}0^{j}}$ (resp.  $\evac(T)^* \in \Tab_{0^{n-j}1^{j}}$).

\subsection{}
\label{ss affinecells}
In this subsection we  give a combinatorial description of cells of a certain submodule of $\Res_\H\eH$, where $\eH$ is the extended affine Hecke algebra of type $A$. We digress to introduce this object. See \cite{X}, \cite{H}  for a more thorough introduction.

First of all, everything we have done so far for Coxeter groups also holds for extended Coxeter groups. An extended Coxeter group, defined from a Coxeter group $(W, S)$ and an abelian group $\Pi$ acting by automorphisms on $(W, S)$, is the semi-direct product $\Pi \ltimes W$, denoted $\eW$. The length function and partial order on $W$ extend to $\eW$: $\ell(\pi v) = \ell(v)$, and $\pi v \leq \pi' v'$ if and only if $\pi = \pi'$ and $v \leq v'$, where $\pi, \pi' \in \Pi$, $v, v' \in W$. The definitions of left and right descent sets, reduced factorization, the $\br{\cdot}$-involution, and definition of the Hecke algebra (\ref{e Hecke algebra def}) of \textsection\ref{s Hecke algebra} carry over identically. The Hecke algebra elements $T_\pi$ for $\pi \in \Pi$ will be denoted simply by $\pi$; note that these are $\br{\cdot}$-invariant.

Although it is possible to allow parabolic subgroups to be extended Coxeter groups, we define a parabolic subgroup of $\eW$ to be an ordinary parabolic subgroup of $W$ to simplify the discussion (this is the only case we will need later in the paper).  With this convention, each coset of a parabolic subgroup $\eW_J$ contains a unique element of minimal length.

In the generality of extended Coxeter groups, a $\eW$-graph $\Gamma$ must satisfy $\pi\gamma \in \Gamma$ for all $\pi \in \Pi$, $\gamma \in \Gamma$ in addition to (\ref{Wgrapheq}). The machinery of IC bases carries over without change. Everything we have done so far holds in this setting; the only thing that needs some comment is Theorem \ref{t HY}. Presumably the proof carries over without change, however it is also easy to deduce this from Theorem \ref{t HY} for ordinary Coxeter groups: use the fact that $\tP_{\pi x, \delta, \pi v, \gamma} = \tP_{x, \delta, v, \gamma}$ to deduce that with the definition (\ref{e mudef}) for $\mu$, $\mu(\pi x, \delta, \pi v, \gamma) = \mu(x, \delta, v, \gamma)$ ($x, v \in W, \pi \in \Pi$); the identity $\tB_{\pi v, \gamma} = \pi \tB_{v, \gamma}$ together with the theorem for ordinary Coxeter groups give it for extended Coxeter groups.

Let $W, \aW$ be the Weyl groups of type $A_{n-1}, \tilde{A}_{n-1}$ respectively. Put $K_j = \{s_0, s_1, \ldots,\hat{s}_j,\ldots, s_{n-1}\}$.  Let $Y\cong \ZZ^n,\ Q\cong \ZZ^{n-1}$ be the weight lattice, root lattice of $GL_n$. The extended affine Weyl group $\eW$ is both $Y \rtimes W$ and $\Pi \ltimes \aW$ where $\Pi \cong Y / Q \cong \ZZ$. For $\lambda \in Y$, let $y^\lambda$ be the corresponding element of $\eW$ and let $y_i = y^{\epsilon_i}$, where $\epsilon_1, \ldots, \epsilon_n$ is the standard basis of $Y$. Also let $\pi$ be the generator of $\Pi$ such that $s_i\pi = \pi s_{i-1}$, where subscripts are taken mod $n$. The isomorphism $Y \rtimes W \cong \Pi \ltimes \aW$ is determined by
\be \label{e Y W=Pi Wa} y_i \to s_{i-1} \ldots s_1 \pi s_{n-1} \ldots s_i, \ee
and the condition that $W \hookrightarrow Y \rtimes W \cong \Pi \ltimes \aW$ identifies $W$ with $\aW_{K_0}$ via $s_i \mapsto s_i$, $i \in [n]$.

Another description of $\eW$, due to Lusztig, is as follows. The group $\eW$ can be identified with the group of permutations $w: \ZZ \to \ZZ$ satisfying $w(i+n) = w(i)+n$ and $\sum_{i = 1}^n (w(i) - i) \equiv 0 \mod n$. The identification takes $s_i$ to the permutation transposing $i+kn$ and $i+1+kn$ for all $k \in \ZZ$, and takes $\pi$ to the permutation $k \mapsto k+1$ for all $k \in \ZZ$. We can then express an element $w$ of $\eW$ in \emph{window notation} as the sequence of numbers $w^{-1}(1) \dots w^{-1}(n)$, also referred to as just the word of $w$. For example, if $n = 4$ and $w = \pi^2 s_2 s_0 s_1$, then the word of $w$ is $\text{-}3203$.

Let $\pY = \ZZ^n_{\geq 0}$ and $\pW = \pY \rtimes W$. There is a corresponding subalgebra $\pH$ of $\eH$, equal to both $A \{ T_w : w \in \pW \}$ and $A \{ \C_w : w \in \pW \}$ \cite{B}. Let $\Gamma$ be a $W$-graph and put $E = A\Gamma$. The \emph{positive, degree $d$} part of $\Res_\H \eH \tsr_\H E$ is
\be (\pH \tsr_\H E)_d := A\{\tB_{y^\lambda v,\gamma}:\lambda \in \pY, |\lambda| = d, v \in W \text{ such that } y^\lambda v \in \eW^{K_0}, \gamma\in\Gamma\}. \ee

\begin{proposition}
$(\pH \tsr_\H E)_d$ is a cellular submodule of $\Res_\H \eH \tsr_\H E$.
\end{proposition}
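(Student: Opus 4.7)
The plan is to realize $(\pH\tsr_\H E)_d$ as the intersection of two cellular $\H$-submodules of $\Res_\H\eH\tsr_\H E$---namely $\pH\tsr_\H E$ itself and the ``degree $d$'' part of $\eH\tsr_\H E$---each identified via Proposition \ref{p ic lower ideal} applied to a lower order ideal of $\eW^{K_0}\times\Gamma$ in the partial order $\ord$ of \textsection\ref{ss IC basis}.

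For the first piece, I would start from the identity $\pH=A\{T_w:w\in\pW\}=A\{\C_w:w\in\pW\}$ cited from \cite{B}, which amounts to $\pW$ being Bruhat-downward-closed in $\eW$. The $K_0$-parabolic decomposition $w=w^{K_0}\cdot w_{K_0}$ has $w_{K_0}\in W\subseteq\pW$ and preserves the $Y$-component of $w$, so $\pW\cap\eW^{K_0}$ is Bruhat-downward-closed inside $\eW^{K_0}$, and hence $(\pW\cap\eW^{K_0})\times\Gamma$ is a lower order ideal for $\ord$. Proposition \ref{p ic lower ideal} then gives $\pH\tsr_\H E=A\{\tB_{y^\lambda v,\gamma}:\lambda\in\pY,\,y^\lambda v\in\eW^{K_0},\,\gamma\in\Gamma\}$, with $\H$-stability coming from $\H\subseteq\pH$ as a subalgebra.

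For the second piece I would use the presentation $\eW=\Pi\ltimes\aW$. Every $w\in\eW$ decomposes uniquely as $w=\pi^j\tilde w$ with $\tilde w\in\aW$, and one has $j=|\lambda|$ when $w=y^\lambda v$, so degree is just the $\Pi$-grading. The $\H$-action on the standard basis manifestly preserves this grading: left $W$-multiplication acts on $Y$ by permutation, preserving $|\cdot|$, and leaves the $\Pi$-component untouched. The critical observation is that the bar involution also preserves degree: writing $w^{-1}=\pi^{-j}\tilde w''$ with $\tilde w''\in\aW$, I get $\br{T_w}=T_{w^{-1}}^{-1}=T_{\tilde w''}^{-1}\pi^{j}$, which is an $A$-combination of $T_{w'}$ with $w'\in\pi^j\aW$, i.e.\ $w'$ of degree $j$. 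Combined with parabolic reduction $T_u\bt\gamma=T_{u^{K_0}}\bt(T_{u_{K_0}}\gamma)$ preserving the $\Pi$-component (as $u_{K_0}\in W\subseteq\aW$), this yields bar-stability of $M_d:=A\{T_{y^\lambda v}\bt\gamma:y^\lambda v\in\eW^{K_0},\,|\lambda|=d,\,\gamma\in\Gamma\}$. Since $\ord$ is defined via $(\br{T_w}-T_w)\bt\gamma$, the degree-$d$ subset of $\eW^{K_0}\times\Gamma$ is a lower order ideal, and another application of Proposition \ref{p ic lower ideal} identifies $M_d$ with the span $A\{\tB_{y^\lambda v,\gamma}:|\lambda|=d\}$.

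Combining, $(\pH\tsr_\H E)_d=\pH\tsr_\H E\cap M_d$ is an intersection of cellular $\H$-submodules---so the corresponding index sets intersect in a set of canonical basis vectors that spans the intersection---hence itself a cellular $\H$-submodule of $\Res_\H\eH\tsr_\H E$. The main obstacle will be the bar-stability step: a priori $\br{T_w}$ is only a Bruhat-lower sum, and Bruhat order on $\eW$ fails to preserve degree in general (e.g.\ $1<y_1$, yet $|0|\neq 1=|\epsilon_1|$), so separating out the $\Pi$-factor via the $\Pi\ltimes\aW$ presentation \emph{before} inverting is what makes the argument go through.
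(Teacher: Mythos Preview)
Your proof is correct, but it takes a more elaborate route than the paper's. The paper's argument is essentially two lines: since $(\pH\tsr_\H E)_d$ is \emph{defined} as a span of canonical basis elements, only $\H$-stability needs checking; rewriting the basis as $\{\pi^d\tB_{w,\gamma}:\pi^d w\in\pW,\,w\in\aW\}$ via the identity $\tB_{\pi^d w,\gamma}=\pi^d\tB_{w,\gamma}$, stability then follows from $\H\subseteq\pH$. Your approach instead builds $(\pH\tsr_\H E)_d$ as the intersection of two cellular submodules, each obtained from Proposition~\ref{p ic lower ideal} applied to an explicit lower order ideal. This is longer but more systematic: it makes transparent exactly which order-theoretic facts (Bruhat-downward closure of $\pW$; degree preservation under $\ord$) are doing the work, whereas the paper leaves these implicit in the phrase ``it is easy to see.''

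One correction to your closing remark: the example $1<y_1$ is wrong. In the extended Coxeter group $\eW=\Pi\ltimes\aW$, the Bruhat order is defined (see \textsection\ref{ss affinecells}) by $\pi v\leq\pi' v'$ iff $\pi=\pi'$ and $v\leq v'$; since $1\in\pi^0\aW$ and $y_1=\pi s_{n-1}\cdots s_1\in\pi^1\aW$, they are incomparable. So Bruhat order on $\eW$ already preserves the $\Pi$-grading (hence degree) by definition, and your ``main obstacle'' is not an obstacle at all---the bar-stability of $M_d$ follows immediately from the fact that $\br{T_w}$ is a Bruhat-lower sum, without needing to separate out the $\Pi$-factor first. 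This does not affect the validity of your argument, only the commentary around it.
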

\begin{proof}
The $A$-basis above can be rewritten as $\{ \pi^d \tB_{w,\gamma} : \pi^d w \in \pW, w \in \aW, \gamma \in \Gamma \}$. It is easy to see this is left stable by the action of $\H$, given that $\pH$ is a subalgebra of $\eH$ containing $\H$.
\end{proof}

Let $\Gamma$ be a cell of $\H$ labeled by $T$ and $\hE^1= (\pH \tsr_{\H} A \Gamma)_1$.  We now return to give a combinatorial description of the cells of $\hE^1$. The restriction $(\Res_\H \eH \tsr_\H E, \eH \tsr_\H E)$ is not weakly multiplicity-free, so we have to use the description (\ref{e bad cell label}).  In this case, we have found it most convenient to use a hybrid of the description in (\ref{e bad cell label}) and local labels, which we now describe.

Given $x \in \eW$, define $P(x)$ to be the insertion tableau of the word of $x$.  Since $\lj{x}{K_0}$ is the permutation of $1, \ldots, n$ with the same relative order as the word of $x$, $P(\lj{x}{K_0})$ is obtained from $P(x)$  by replacing the entries with $1,\ldots,n$ and keeping relative order the same.

Let $a_k = s_{k-1}\ldots s_{1}$ for $k\in \{2,\ldots,n\}$, $a_1 = 1$ be the minimal left coset representatives of $W_{J'_{n-1}}$.  Then $\hE^1 = A \{\tB_{a_k \pi, w}: k \in [n], P(w) = T \}$.  In this case, define the local label of the cell containing $\tB_{a_k \pi, w}$ to be $P(a_k \pi w)$.  A caveat to this is that if we then form some induced module $\H_1 \tsr_{J_1} \hE^1$, it is good to convert the local labels of $\hE^1$ to be the tableaux $P(\lj{(a_k \pi w)}{K_0})$ before computing local labels of  $\H_1 \tsr_{J_1} \hE^1$ (see Figure \ref{f VVe+ affine} of \textsection\ref{ss reduced non-reduced}).

Combinatorially, the cells of $\hE^1$ may be described as follows.  Let $w \in W$ with $P(w) = T$ and define $Q = Q(w)$. Let $\lj{w}{J_{n-1}}^*$ be the word obtained from $w$ by deleting its last number (see Example \ref{ex affine insert}). Then $\lj{w}{J_{n-1}}^*\xrightarrow{RSK}(T^-,Q_{\leq n-1})$, where $T^-$ is obtained from $T$ by uninserting the square $Q\backslash Q_{\leq n-1}$; let $c$ be the number uninserted.  Write $a_k\pi w$ in window notation, which is $\lj{w}{J_{n-1}}^*$ with a $c-n$ inserted in the $k$-th spot. Let $Q^+$ be the tableau obtained by column-inserting $k$ into the tableau obtained from $Q_{\leq n-1}$ by replacing entries with $\{1,\ldots,k-1,k+1,\ldots,n\}$ and keeping the same relative order. We have $a_k\pi w\xrightarrow{RSK}(T^+,Q^+)$, where $T^+$ is $\jdt(T^-,Q^+\backslash Q_{\leq n-1})$ with the number $c-n$ added to the top left corner (so that the resulting tableau has a straight-shape).
This implies the following result about the cells of $\hE^1$.

\begin{proposition}
The local labels of the cells of $\hE^1$ are those tableaux obtained from $T$ by uninserting some outer corner then performing jeu de taquin to some inner corner, and finally filling in the missing box in the top left with a $c-n$, where $c$ is the entry bumped out in the uninsertion.
\end{proposition}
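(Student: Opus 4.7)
The plan is to leverage the paragraph preceding the proposition, which essentially computes $P(a_k\pi w)$ in terms of $T$, $k$, and an outer corner of $T$; the proposition will then follow by a direct enumeration.

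First I would verify the window-notation identity for $a_k\pi w$. Using the Lusztig realization of $\eW$ as periodic permutations of $\ZZ$, one has $\pi: j \mapsto j+1$, and $a_k^{-1}(i) = i+1$ for $i < k$, $a_k^{-1}(k) = 1$, and $a_k^{-1}(i) = i$ for $i > k$. Composing $(a_k\pi w)^{-1}(i) = (\pi w)^{-1}(a_k^{-1}(i))$, together with $(\pi w)^{-1}(j) = w^{-1}(j-1)$ and the periodicity $w^{-1}(0) = w^{-1}(n)-n$, shows that the window of $a_k\pi w$ equals the word of $\lj{w}{J_{n-1}}^*$ with $c-n = w^{-1}(n)-n$ inserted at position $k$.

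Next I would analyze RSK applied to this word. Because $c \in [n]$, the letter $c-n \leq 0$ is a strict minimum among all letters, so it must occupy the $(1,1)$-cell of $T^+ := P(a_k\pi w)$. The key input is the standard RSK dictionary: inserting a strict-minimum letter at position $k$ of a word acts on the recording tableau by column-inserting $k$ into the relabelling of $Q_{\leq n-1}$ with entries $\{1,\ldots,k-1,k+1,\ldots,n\}$, and acts on the insertion tableau by placing the minimum at $(1,1)$ and displacing the remaining entries by a reverse jeu de taquin slide whose outer starting cell $d'$ is precisely the shape change $\sh(Q^+)\setminus\sh(Q_{\leq n-1})$ of the column insertion. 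Granting this, observe that on a straight-shape tableau every reverse slide must terminate at $(1,1)$: by the Young-diagram property, every cell other than $(1,1)$ has at least one of its upper or left neighbours present, so the hole can always continue moving. Hence $T^+\setminus(1,1)$ equals the skew tableau $\jdt(T^-,d')$ of shape $(\sh(T^-)\cup d')/(1,1)$, and placing $c-n$ at $(1,1)$ recovers $T^+$.

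It remains to enumerate. Varying $w$ among preimages of $T$ under RSK is equivalent to varying $Q = Q(w)$ over SYT of $\sh(T)$; the cell $Q\setminus Q_{\leq n-1}$ is the one containing $n$ and ranges over every outer corner of $\sh(T)$, and the associated uninsertion realises every $(T^-,c)$ of the described form. Fixing such an uninsertion and varying $k \in [n]$, the column insertions of $k$ into the relabelled $Q_{\leq n-1}$ realise every outer cell $d'$ that can serve as the starting point of a reverse slide on $T^-$. Together these produce exactly the set of tableaux described in the statement.

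The hardest step will be the RSK dictionary in the second paragraph: carefully identifying insertion of a universal minimum at position $k$ with column insertion of $k$ on the recording side and with a reverse jdt slide starting at the corresponding outer cell on the insertion side. This is a well-known correspondence in ordinary finite RSK, but transporting it to the affine setting with the negative letter $c-n$ and matching conventions with the paper's specific relabelling of $Q_{\leq n-1}$ requires careful bookkeeping.
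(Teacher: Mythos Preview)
Your approach is exactly the paper's: the proposition is stated as an immediate consequence of the preceding paragraph, and you correctly reconstruct that paragraph's content (window notation for $a_k\pi w$, the RSK dictionary identifying $Q^+$ with a column insertion and $T^+$ with a reverse slide, and the observation that on a straight shape the slide must terminate at $(1,1)$). Flagging the RSK dictionary as the hard step is appropriate; the paper also asserts it without proof.

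There is, however, a gap in your surjectivity argument. You write ``Fixing such an uninsertion and varying $k\in[n]$, the column insertions of $k$ into the relabelled $Q_{\leq n-1}$ realise every outer cell $d'$.'' Read literally, this keeps $Q_{\leq n-1}$ fixed while only $k$ moves, and that claim is false. For instance, with $Q_{\leq n-1} = {\tiny\young(13,2)}$ of shape $(2,1)$, one checks that column-inserting $k=1,2,3,4$ into the appropriate relabellings produces new cells $(1,3),(1,3),(3,1),(3,1)$ respectively, never $(2,2)$. The fix is easy: fixing the uninsertion only pins down the outer corner of $\sh(T)$, hence only $\sh(Q_{\leq n-1})$; the tableau $Q_{\leq n-1}$ itself is still free to range over all SYT of that shape. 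Given any addable cell $d'$ of $\sh(T^-)$, choose any SYT $Q^+$ of shape $\sh(T^-)\cup\{d'\}$, reverse column-uninsert from $d'$ to obtain $(Q',k)$, and let $Q_{\leq n-1}$ be the relabelling of $Q'$. So vary both $Q_{\leq n-1}$ and $k$, not just $k$.
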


\begin{example}
\label{ex affine insert}
For the element $(a_3 \pi, 346512) \in \eW^{K_0} \times W$, the insertion and recording tableaux discussed above are
\hoogte=10pt
\breedte=9pt
\dikte=0.3pt
\be \begin{array}{cccc}
& a_3\pi w & \lj{w}{J_{n-1}}^* & w\\
& 34\ng 4651 & 34651 & 346512\\
P &
{\tiny\young(\mfour 15,34,6)}
 & {\tiny\young(145,3,6)} & {\tiny\young(125,34,6)}\\
 \ \\
Q & {\tiny\young(124,35,6)} & {\tiny\young(123,4,5)} & {\tiny\young(123,46,5)}
\end{array} \ee
\end{example}

\section{Computations of some $\C_w$}
\label{s computation c_w}
Suppose in what follows that $r=n-1$. Let $b_k := s_k\ldots s_{n-1}$ for $k\in [n-1]$ and $b_n = 1$ be the elements of $W^J$. It is possible to write down explicitly an element from each cell of $\H\tsr_{J}\H$ in terms of the canonical basis of $\H$. This is the content of the following theorem, which we include mainly for its application in the next section.  It is quite interesting for its own sake, however, given that it does not seem to be known how to write down an element from each cell of $\H$ in terms of the $T$'s.

\begin{proposition}\label{p easyCw}
Let $\Gamma$ be a $W_J$-graph, and $\gamma\in\Gamma$ satisfying $K:=\{s_k,\ldots,s_{n-2}\}\subseteq L(\gamma)$. Then
$$ \tB_{b_k,\gamma}=\frac{1}{[n-k]!}\C_{b_k\lj{w_0}{K}}\bt\gamma = \left(T_{b_{k}} + \ui T_{b_{k+1}} + \ldots + \u^{k-n}T_{b_n}\right)\bt\gamma. $$
\end{proposition}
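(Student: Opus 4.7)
The plan is to compute the middle quantity $\tfrac{1}{[n-k]!}\C_{b_k\lj{w_0}{K}}\bt\gamma$ directly in the standard basis and verify it equals the right-hand sum, then invoke uniqueness of canonical basis elements. The middle expression is $\br{\cdot}$-invariant because each of $\C_{b_k\lj{w_0}{K}}$, $[n-k]!$, and $\gamma$ is, while the right-hand sum is transparently congruent to $T_{b_k}\bt\gamma$ modulo $\ui\L$. Once the two are shown equal, Theorem~\ref{t HY canbas exists} identifies this common element with $\tB_{b_k,\gamma}$. So the only real content is the middle-equals-right identity.

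For this, first I would identify $b_k\lj{w_0}{K}$ with the longest element $\lj{w_0}{K'}$ of $W_{K'}$, where $K':=\{s_k,\ldots,s_{n-1}\}$. Since $b_k\in W^J$ and $\lj{w_0}{K}\in W_K\subseteq W_J$, the factorization $b_k\cdot\lj{w_0}{K}$ is reduced of length $(n-k)+\binom{n-k}{2}=\binom{n-k+1}{2}=\ell(\lj{w_0}{K'})$; as the product lies in $W_{K'}$, it is forced to be the longest element. Then I would invoke the standard closed form
\[ \C_{\lj{w_0}{K'}}=\sum_{v\in W_{K'}}\u^{\ell(v)-\binom{n-k+1}{2}}T_v, \]
which is $\br{\cdot}$-invariant, manifestly $\equiv T_{\lj{w_0}{K'}}\bmod\ui\L$, and hence agrees with the canonical basis element by uniqueness.

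What remains is a coset-wise rewriting. Decompose $W_{K'}=\bigsqcup_{i=k}^{n}b_iW_K$, with $b_k,\ldots,b_n$ the minimal coset representatives (they are exactly $W^J\cap W_{K'}$), and write each $v\in b_iW_K$ uniquely as a reduced product $b_i\cdot v'$ with $v'\in W_K$. The hypothesis $K\subseteq L(\gamma)$ combined with (\ref{Wgrapheq}) gives $T_s\gamma=\u\gamma$ for $s\in K$, hence $T_{v'}\gamma=\u^{\ell(v')}\gamma$ along any reduced expression. Substituting into the closed form, the $b_iW_K$-contribution to $\C_{\lj{w_0}{K'}}\bt\gamma$ becomes
\[ \u^{(n-i)-\binom{n-k+1}{2}}\Bigl(\sum_{v'\in W_K}\u^{2\ell(v')}\Bigr)T_{b_i}\bt\gamma. \]
The main bookkeeping step--and the only spot requiring more than an immediate observation--is the Poincar\'e polynomial identity $\sum_{v'\in W_K}\u^{2\ell(v')}=\u^{\binom{n-k}{2}}[n-k]!$, coming from $[j]=\u^{-(j-1)}(1+\u^2+\cdots+\u^{2(j-1)})$. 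Using also $\binom{n-k+1}{2}-\binom{n-k}{2}=n-k$, the exponent collapses to $k-i$, each coset contributes $[n-k]!\,\u^{k-i}T_{b_i}\bt\gamma$, and dividing by $[n-k]!$ and summing over $i\in\{k,\ldots,n\}$ yields the right-hand side of the proposition.
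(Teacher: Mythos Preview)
Your proof is correct and follows essentially the same approach as the paper. The paper packages the coset computation as the ``well-known identity'' $\C_{\lj{w_0}{K'}} = \bigl(T_{b_k} + \ui T_{b_{k+1}} + \cdots + \u^{k-n}T_{b_n}\bigr)\C_{\lj{w_0}{K}}$ (with $K' = K\cup\{s_{n-1}\}$) and then observes that $\C_{\lj{w_0}{K}}\bt\gamma = [n-k]!\,\gamma$; you instead expand $\C_{\lj{w_0}{K'}}\bt\gamma$ directly via the coset decomposition $W_{K'} = \bigsqcup_i b_i W_K$, which is exactly the content of that factorization identity unpacked.
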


\begin{proof}
The right hand equality follows from $K\subseteq L(\gamma)$ and the well-known identity $\C_{b_k\lj{w_0}{K}} = \C_{\lj{w_0}{K \cup s_{n-1}}} = \left(T_{b_k} + \ui T_{b_{k+1}} + \ldots + \u^{k-n}T_{b_n}\right)\C_{\lj{w_0}{K}}$ ($\lj{w_0}{K}$ is the longest element of $W_K$). Once this is known we have produced an element that is both $\br{\cdot}$-invariant (being equal to $\frac{1}{[k-1]!}\C_{b_k\lj{w_0}{K}}\bt\gamma$) and congruent to $T_{b_k}\bt\gamma\mod\ui\L$ (being equal to $\left(T_{b_{k}} + \ui T_{b_{k+1}} + \ldots + \u^{k-n}T_{b_n}\right)\bt\gamma$).
 \end{proof}

\begin{theorem}\label{specialelementstheorem}
Let $\ce$ be the cell of $\H\tsr_{J}\H$ determined by $\lambda^{(1)},\mu,P$, where $\lambda^{(1)},\mu,\sh(P)$ are partitions of $n,n-1,$ and $n$ respectively satisfying $\mu \subseteq \lambda^{(1)},\sh(P)$. Then $\ce$ contains an element
$$ \tB_{b_{k'},w} = \left(T_{b_{k'}} + \ui T_{b_{k'+1}} + \ldots + \u^{-k+1}T_{b_n}\right)\bt \C_w,$$
where the $k$-th row of $\lambda^{(1)}$ contains the square $\lambda^{(1)}/\mu$, $k' = n + 1 - k$, and $w$ satisfies $\{s_{k'},\ldots,s_{n-2}\}\subseteq L(w)$.
\end{theorem}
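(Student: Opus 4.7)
The plan is to exhibit an explicit $w \in W_J$ with $K := \{s_{k'}, \ldots, s_{n-2}\} \subseteq L(w)$ and $P(b_{k'} w) = P$; then Proposition \ref{p easyCw} applied to $\gamma = \C_w$ yields the displayed formula for $\tB_{b_{k'},\C_w}$, while Proposition \ref{TCTTprop} together with Theorem \ref{t HJHcells} pins down its cell.

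For the construction of $w$ I would use RSK. First build an SYT $Q$ of shape $\lambda^{(1)}$ with the entries $k', k'+1, \ldots, n$ lying in rows $1, 2, \ldots, k$ respectively; such a $Q$ exists because $\lambda^{(1)}/\mu$ is an outer corner in row $k$, which lets one place $n$ at $(k, \lambda^{(1)}_k)$, then recursively place $n-1$ at an outer corner of the remaining shape in row $k-1$, and so on up to $k'$ in row $1$, completing with $1, \ldots, k'-1$ in any SYT-compatible way. Let $v$ be the permutation corresponding to $(P, Q)$ under RSK. Since $k'$ lies in row $1$ of $Q$ and $k'+1, \ldots, n$ occupy strictly descending rows, one has $v^{-1}(k') = n$, so $v(n) = k' = b_{k'}(n)$ and $w := b_{k'}^{-1} v$ lies in $W_J$. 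The descent condition $L(v) \supseteq \{s_{k'}, s_{k'+1}, \ldots, s_{n-1}\}$, coming from $Q$'s row chain, translates through the cycle structure of $b_{k'}$ into the required $K \subseteq L(w)$.

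With $w$ in hand, Proposition \ref{p easyCw} gives the displayed formula. To identify the cell, I would use $w \in W_J$ and the reduced factorization $b_{k'} \cdot w$ (valid because $b_{k'} \in W^J$ and $w \in W_J$) to compute
\[
\tB_{b_{k'},\C_w} \equiv T_{b_{k'}} \bt T_w = T_{b_{k'} w} \bt 1 \equiv \C_{b_{k'} w} \bt 1 \pmod{\ui \L}.
\]
By Proposition \ref{TCTTprop} the standard basis $CT(W \times \leftexp{J} W)$ produces the same IC basis as $TC(W^J \times \Gamma_W)$, so uniqueness forces $\tB_{b_{k'},\C_w}$ to coincide with the canonical basis element indexed by $(b_{k'} w, 1)$. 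Theorem \ref{t HJHcells} then places this element in the cell of $\H \tsr_J \H$ corresponding to the cell of $\C_{b_{k'} w} = \C_v$ in $\H$, which is labeled by $P(v) = P$; meanwhile $\C_w$ sits in the $W_J$-cell of $\Res_J \H$ whose local label $P(w)$ equals $P_{\leq n-1}$ and hence has shape $\mu$. The main obstacle is combinatorial: verifying the existence of such a $Q$, and showing $P(w) = P_{\leq n-1}$. The latter reduces to the observation that inserting the maximum letter $n$ at any position of a word commutes, up to removing the corresponding box, with insertion of the rest of the word---a direct induction using that $n$ always occupies the end of its current row.
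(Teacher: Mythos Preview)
Your argument has a genuine gap: you have swapped the roles of the two factors of $\H$ in $\H\tsr_J\H$, and as a result your RSK construction is ill-defined.

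In the paper's conventions (see the stuffed notation of \S\ref{ss locallabels} and the cell description in \S\ref{ss induced cells}--\S\ref{ss restrict cells}), the canonical basis element $\tB_{b_{k'},w}$ lies in $\H_1\tsr_J\H_2$ with $b_{k'}\in W_1^J$ and $w\in W_2=W$; in stuffed form this is $(b_{k'}\,\lj{w}{J},\,w)$, so the local label in $\H_2$ is $P(w)$ and the local label in $\H_1$ is $P(b_{k'}\,\lj{w}{J})$. The triple $(\lambda^{(1)},\mu,P)$ means that $P(w)=P$, that the intermediate $W_J$-cell has shape $\mu$ (i.e.\ $\sh(Q(w)_{<n})=\mu$), and that $\sh(P(b_{k'}\,\lj{w}{J}))=\lambda^{(1)}$. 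You instead take $w\in W_J$ and arrange $P(b_{k'}w)=P$, so your $P$ ends up as the $\H_1$-label; your cell is then $(\sh(P),\,\sh(P_{<n}),\,P(w))$, which is not the required $(\lambda^{(1)},\mu,P)$ unless $\lambda^{(1)}=\sh(P)$ and $\mu=\sh(P_{<n})$. Neither is forced by the hypotheses (Example~\ref{e special element} already has $\sh(P)=(3,3,2,1)\neq(3,2,2,2)=\lambda^{(1)}$).

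The error shows up immediately in your construction of $Q$: you build $Q$ of shape $\lambda^{(1)}$ and then write ``let $v$ be the permutation corresponding to $(P,Q)$ under RSK'', but RSK requires $\sh(P)=\sh(Q)$, and in general $\sh(P)\neq\lambda^{(1)}$. The paper's proof instead takes $Q$ of shape $\lambda^{(2)}:=\sh(P)$, places $n$ at the square $\lambda^{(2)}/\mu$ (so the intermediate really has shape $\mu$), places $k',\ldots,n-1$ at the ends of rows $1,\ldots,k-1$ of $\mu$ (so $K\subseteq L(w)$), defines $w\in W$ (not $W_J$) by $w\xrightarrow{RSK}(P,Q)$, and then checks via a bumping-path computation that $Q(b_{k'}\,\lj{w}{J})=Q_{<n}^*\leftarrow k'$ has shape $\lambda^{(1)}$. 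Your descent translation through $b_{k'}$ and your $P_{<n}$ lemma are correct, but they are attached to the wrong $w$.
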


\begin{proof}
To construct a desired $w$, let $Q$ be any tableau of shape $\lambda^{(2)}$ such that $Q_{< n}$ has a $k'-1+r$ in the last box of the $r$-th row for $r \in \{1,\ldots,k-1\}$ (see Example \ref{e special element}) and $Q_{\geq n}$ is the square $\lambda^{(2)}/\mu$. Define $w$ by $w\xrightarrow{RSK}(P,Q)$.

Consider the element $(b_{k'},w) = \mathbf{z}\in W \stackrel{J}{\times} W$, which is $(b_{k'}\lj{w}{J},w)$ in stuffed notation. Now $Q(b_{k'}\lj{w}{J}) = P(\lj{w}{J}^{-1}s_{n-1}\dots s_{k'}) = Q_{<n}^* \leftarrow k'$, where $Q_{<n}^*$ is $Q_{<n}$ with $1$ added to all numbers $\geq k'$, and $T \leftarrow a$ denotes the row-insertion of $a$ into $T$. By construction of $Q_{<n}$, the bumping path of inserting $k'$ into $Q_{<n}^*$ consists of the last square in rows $1,\ldots,k$, the last square in the $k$-th row being the newly added square. Therefore, $\tB_{\mathbf{z}}$ is contained in $\ce$ because the shape of $Q_{<n}^* \leftarrow k'$ is $\lambda^{(1)}$.

Remembering our convention for the word of $w$, the left descent set $L(w)$ can be read off from $Q$: it is the set of $s_i$ such that $i+1$ occurs in a row below the row containing $i$. In particular, $K := \{s_{k'},\ldots,s_{n-2}\}\subseteq L(w)$. The theorem follows from Proposition \ref{p easyCw}.
\end{proof}

\begin{example}\label{e special element}
\newcommand{\bfsix}{\ensuremath{\mathbf{6}}}
\newcommand{\bfseven}{\ensuremath{\mathbf{7}}}
\newcommand{\bfeight}{\ensuremath{\mathbf{8}}}

If $n=9$, $k=4$, $\mu = (3,2,2,1)$, and $P = {\tiny\young(158,269,37,4)}$, we could choose
$Q = {\tiny \young(12\bfsix,3\bfseven9,4\bfeight,5)}$ or any tableau with the given numbers in bold. Then,
$$ b_k'\lj{w}{J},\lj{w}{J},w=473219865,47321865,473219658, $$
and $Q_{<n} = {\tiny \young(126,37,48,5)}$, $Q(b_k'\lj{w}{J}) = {\tiny \young(126,37,48,59)}$.
\end{example}

\section{Canonical maps from restricting and inducing}
\label{s canonical maps}

\subsection{}
The functor $\H\tsr_J-:\H_J$-$\Mod \to \H$-$\Mod$ is left adjoint to $\Res_J:\H$-$\Mod\to \H_J$-$\Mod$. Let $\unit$ (resp. $\counit$) denote the unit (resp. counit) of the adjunction so that $\unit(F)\in \hom_{\H_J\text{-}\Mod}(F,\Res_J\H\tsr_JF)$ corresponds to $\id_{\H\tsr_JF}$ (resp. $\counit(E)\in \hom_{\H\text{-}\Mod}(\H\tsr_J\Res_JE,E)$ corresponds to $\id_{\Res_JE}$). The unit (resp. counit) is a natural transformation from the identity functor on $\H_J$-$\Mod$ to the functor $\Res_J \H \tsr_J-$ (resp. from the functor $\H \tsr_J \Res_J$ to the identity functor on $\H$-$\Mod$). We will omit the argument $F$ or $E$ in the notation for the unit and counit when there is no confusion. Explicitly, $\unit:F\to\Res_J\H\tsr_JF$ is given by $f\mapsto 1\bt f$, and $\counit:\H\tsr_JE\to E$ is given by $h\bt e\mapsto he$.  It is clear from these formulas that the unit and counit intertwine the  involution $\br{\cdot}$.

\subsection{}
The unit behaves in a simple way on canonical basis elements.
\begin{proposition}\label{p res ind}
Let $F=A\Gamma$ be any $\H_J$-module coming from a $W_J$-graph $\Gamma$. The map $\unit:F \to \Res_J \H\tsr_J F$ takes canonical basis elements to canonical basis elements.  Therefore $\im(\unit)$ is a cellular submodule isomorphic to $A\Gamma$ as a $W_J$-graph.
\end{proposition}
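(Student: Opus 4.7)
The plan is to show that $\unit(\gamma) = 1 \bt \gamma$ coincides with the canonical basis element $\tB_{\text{id}, \gamma}$ of $\tE = \H \tsr_J F$ for every $\gamma \in \Gamma$; both assertions of the proposition then follow quickly.

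The key observation is that $\text{id}$ is the (unique) minimum length element of $W^J$, so $1 \bt \gamma = T_{\text{id}} \bt \gamma$ is literally the standard basis element $\T_{\text{id}, \gamma}$ in the construction of \textsection \ref{ss IC basis}. In particular, $1 \bt \gamma \equiv \T_{\text{id}, \gamma} \mod \ui \L$ trivially. It is also $\br{\cdot}$-invariant: $\gamma$ is fixed by the $W_J$-graph involution on $F$ and $\br{T_{\text{id}}} = T_{\text{id}}$, hence $\br{1 \bt \gamma} = \br{1} \bt \br{\gamma} = 1 \bt \gamma$. The uniqueness clause of Theorem \ref{t HY canbas exists} then forces $\unit(\gamma) = \tB_{\text{id}, \gamma}$.

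Granting this, $\im(\unit) = A \{\tB_{\text{id}, \gamma} : \gamma \in \Gamma\}$ is spanned by a subset of the canonical basis of $\tE$ and is $\H_J$-stable (as the image of an $\H_J$-map), so it is a cellular submodule of $\Res_J \tE$ by definition. For the $W_J$-graph isomorphism, the bijection $\gamma \leftrightarrow \tB_{\text{id}, \gamma}$ is $\H_J$-equivariant; moreover the descent-set formula $L(w, \gamma) = L(w) \cup \{s \in S : sw = wt,\ t \in L(\gamma)\}$ from \textsection \ref{ss induced W graph} collapses to $L(\text{id}, \gamma) = L(\gamma)$ using $L(\gamma) \subseteq J$, and (\ref{e mudef}) specializes to $\mu(\text{id}, \delta, \text{id}, \gamma) = \mu(\delta, \gamma)$, reproducing the original $W_J$-graph data on $\Gamma$ verbatim. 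I do not anticipate any real obstacle: once one notices that $1 \bt \gamma$ is already both $\br{\cdot}$-invariant and equal to a standard basis vector, the whole proposition is essentially forced by the defining characterization of the Howlett--Yin canonical basis.
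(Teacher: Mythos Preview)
Your proof is correct and follows exactly the same idea as the paper's own proof, which simply notes that $\unit(\gamma) = \tB_{1,\gamma}$ are canonical basis elements spanning $\im(\unit)$. You have just filled in the details the paper leaves implicit (the $\br{\cdot}$-invariance and congruence check, and the descent-set/edge-weight verification for the $W_J$-graph isomorphism).
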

\begin{proof}
The elements $\tB_{1,\gamma} = \unit(\gamma)$ ($\gamma\in \Gamma$) are canonical basis elements and are an $A$-basis for the image of $\unit$.
\end{proof}

\subsection{}
Again, restrict to the case where $W$ and $\H$ are of type $A_{n-1}$, $S = \{s_1,\ldots,s_{n-1}\}$, and $J = S\backslash s_{n-1}$.

We are not able to give a good description of where the counit $\counit$ takes canonical basis elements in general, but we have a partial result along these lines, assuming the following conjecture.

\begin{conjecture}\label{c dominance}
Let $\Lambda$ be the $W_1$-graph on $\HJJH$ with $\H_1$ of type $A$.  If $\mathbf{y} \leq_\Lambda \mathbf{z}$, $\mathbf{y,z} \in \Lambda$ and $\mathbf{y,z}$ are in cells with local labels of shape $\lambda, \mu$ respectively, then $\lambda < \mu$ in dominance order.
\end{conjecture}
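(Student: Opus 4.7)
The plan is to adapt Lusztig's $a$-invariant to $\HJJH$, following the model of Geck's treatment \cite[\textsection 4]{G2} for $\Res_J \Gamma_W$. For $\mathbf{y} \in \Lambda$, define
\[
a(\mathbf{y}) := \min\bigl\{ i \geq 0 : \u^{i}\, h_{w,\mathbf{z}}^{\mathbf{y}} \in A^{+} \text{ for all } w \in W_1,\ \mathbf{z} \in \Lambda \bigr\},
\]
where $\C_w \C_{\mathbf{z}} = \sum_{\mathbf{y}} h_{w,\mathbf{z}}^{\mathbf{y}}\, \C_{\mathbf{y}}$ are the structure constants of the $\H_1$-action on $\Lambda$. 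The conjecture will follow from two claims: (i) $a(\mathbf{z}) = n(\lambda_{\mathbf{z}})$, where $\lambda_{\mathbf{z}}$ is the shape of the local label of the cell containing $\mathbf{z}$, and (ii) $\mathbf{y} \leq_{\Lambda} \mathbf{z}$ implies $a(\mathbf{y}) \geq a(\mathbf{z})$. Since $n(\lambda) = \sum_i (i-1) \lambda_i$ is strictly order-reversing on dominance (a consequence of the strict convexity of $k \mapsto \binom{k}{2}$), these together give $\lambda_{\mathbf{y}} \leq \lambda_{\mathbf{z}}$ in dominance, as desired.

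For (i), Theorem \ref{t HJHcells} identifies the cell of $\mathbf{z}$ in $\Lambda$ with a cell of $\Gamma_{W_1}$ of shape $\lambda_{\mathbf{z}}$ as $W_1$-graphs. The structure constants of the $\H_1$-action restricted to this cell therefore coincide with the classical Kazhdan--Lusztig structure constants for a type-A cell of shape $\lambda_{\mathbf{z}}$, which by Lusztig's computation yield $a$-value equal to $n(\lambda_{\mathbf{z}})$. What must still be verified is that the structure constants $h_{w, \mathbf{z}}^{\mathbf{y}}$ with $\mathbf{y}$ outside the cell of $\mathbf{z}$ do not contribute strictly more negative powers of $\u$. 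This should follow from the bipartite and ordered properties of $\Lambda$ recorded just after Proposition \ref{W-set prop}, which give Bruhat-like bounds on the degrees of the Howlett--Yin polynomials $\tP_{\mathbf{y}, \mathbf{z}}$.

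The main obstacle is (ii), the analog of Lusztig's property P4. In the classical setting, P4 rests on positivity of the Kazhdan--Lusztig polynomials and on Lusztig's conjectures P1--P15, all of which are theorems in type A. Here the analog requires nonnegativity of the coefficients of the Howlett--Yin polynomials $\tP_{\mathbf{y}, \mathbf{z}}$ for the $W_1$-graph $\Lambda$; given such positivity, Lusztig's proof of P4 transfers almost verbatim using the structure constants above. Establishing positivity for iterated induced $W$-graphs is the crux of the argument. When $\H_1$ is of type A, Theorem \ref{t HJHcells} permits a reduction within each cellular subquotient $D_{\leq x}/D_{<x} \cong \Gamma_{W_1}$ to classical positivity, but controlling the inter-subquotient contributions $\tP_{\mathbf{y}, \mathbf{z}}$ (with $\mathbf{y}, \mathbf{z}$ in different subquotients) requires a direct analysis modeled on the recursive formulas of \cite{HY1}, \cite{HY2}. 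I expect this positivity step to be where the real work lies; the remainder of the argument is formal once it is in place.
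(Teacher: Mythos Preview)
The statement you are attempting to prove is labeled a \emph{Conjecture} in the paper and is not proved there; the paper explicitly assumes it as a hypothesis in Corollary~\ref{c [k]} and, in the Future Work section, only speculates that an $a$-invariant argument in the style of Geck \cite{G2} might lead to a proof. So there is no ``paper's own proof'' to compare against, and your proposal is an attempt on an open problem.

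That said, your outline has a genuine logical gap even at the level of strategy, independent of the positivity issues you flag. Your claims (i) and (ii) together would yield only
\[
n(\lambda_{\mathbf{y}}) \;\geq\; n(\lambda_{\mathbf{z}}),
\]
and you then assert that since $n(\cdot)$ is strictly order-reversing on dominance, this gives $\lambda_{\mathbf{y}} \leq \lambda_{\mathbf{z}}$. That implication goes the wrong way: $n$ being strictly decreasing on dominance means $\lambda < \mu \Rightarrow n(\lambda) > n(\mu)$, but the converse fails. For instance $\lambda = (3,1,1,1)$ and $\mu = (2,2,2)$ both have $n(\lambda)=n(\mu)=6$ yet are incomparable in dominance. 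So even a complete proof of (i) and (ii) would establish only the $a$-invariant monotonicity (which is exactly the question the paper raises in its Future Work section), not the dominance conjecture itself. In the classical type~$A$ case the dominance statement for $\Gamma_W$ is obtained from the two-sided cell order, not from the scalar $a$-invariant alone; an analogous argument for $\HJJH$ would need some replacement for the two-sided cell structure, which your proposal does not supply.

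Beyond this, your own caveats are accurate: claim (i) requires controlling structure constants $h_{w,\mathbf{z}}^{\mathbf{y}}$ across different cellular subquotients $D_{\leq x}/D_{<x}$, not just within one, and claim (ii) rests on positivity of the Howlett--Yin polynomials for iterated induction, which the paper also lists as an open question in the same Future Work section. So as written this is a plausible plan of attack on a weaker statement, not a proof of the conjecture.
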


Let $\Gamma$ be a cell of $\H$ and $\tau: A\Gamma \to A\Gamma$ an $\H$-module homomorphism.  We want to conclude that $\tau$ is multiplication by some constant $c \in A$.  This can be seen, for instance, by tensoring with $\CC$ over $A$ using any map $A \to \CC$ that does not send $\u$ to a root of unity; Schur's Lemma applies as $\H \tsr_A \CC \cong \CC \S_n$ and $A \Gamma \tsr_A \CC$ is irreducible.   Thus $\tau \tsr_A \CC = a \ \id$, $a \in \CC$ for infinitely many specializations of $\u$ implies $\tau = c \ \id$, $c \in A$.

Let $X_{\lambda}$ be the two-sided cell of $\H$  consisting of the cells labeled by tableaux of shape $\lambda \vdash n$. Let $\Gamma$ be a cell of $\H\tsr_{J}X_{\lambda}$ with local sequence $P_1,P_2$ both of shape $\lambda$. Conjecture \ref{c dominance} implies $A \Gamma$ is a submodule of   $(\H\tsr_{J}X_{\lambda})/ X$, where $X$ is the cellular submodule consisting of those cells of dominance order $< \lambda$.   By a similar argument to the one above, $\counit(X) = 0$.  Therefore the map $\H\tsr_{J}X_{\lambda} \xrightarrow{\counit} X_\lambda$ gives rise to a map $A \Gamma \xrightarrow{\counit} X_\lambda$.

Letting
$\ce$ be a cell of $X_\lambda$, we have
\be \label{e [k]}
A\Gamma \xrightarrow{\counit} X_{\lambda} \xrightarrow{p} A\ce \cong A\Gamma. \ee
The map $p$ is a cellular quotient map by \cite[Corollary 1.9]{L} and the rightmost isomorphism of $W$-graphs comes  from the fact that any two cells with the same local label are isomorphic as $W$-graphs (\textsection\ref{ss locallabels}).   We now can state the main application of Theorem \ref{specialelementstheorem}.

\begin{corollary}
\label{c [k]}
Assuming Conjecture \ref{c dominance} and with the notation above, if the square $P_1 \backslash (P_1)_{<n}$ lies in the $k$-th row, then the composition of the maps in (\ref{e [k]}) is $[k]\ \id$ if $\ce$ is labeled by $P_2$ and 0 otherwise.
\end{corollary}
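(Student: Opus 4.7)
The plan is to apply the Schur-type argument from the paragraph preceding the corollary to reduce $\tau := \text{iso}_{A\ce \to A\Gamma}\circ p\circ \counit|_{A\Gamma}$ to a scalar multiple of $\id_{A\Gamma}$, and then to pin down the scalar by evaluating $\tau$ on a single convenient element of $A\Gamma$---namely the explicit element $\tB_{b_{k'},w}$ produced by Theorem \ref{specialelementstheorem}.

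First I would apply Theorem \ref{specialelementstheorem} with $\lambda^{(1)}=\lambda$, $\mu=\sh((P_1)_{<n})$, and $P=P_2$. This produces $w\in W$ with $P(w)=P_2$ (so $\C_w$ lies in the cell of $X_\lambda$ labeled by $P_2$) and
$$\tB_{b_{k'},w}=\big(T_{b_{k'}}+\ui T_{b_{k'+1}}+\cdots+\u^{1-k}T_{b_n}\big)\bt \C_w\in A\Gamma,$$
where $k'=n+1-k$ and $\{s_{k'},\ldots,s_{n-2}\}\subseteq L(w)$. Reading off the recording tableau $Q$ constructed in that proof: $n-1$ ends row $k-1$ and $n$ is placed in row $k$, so $s_{n-1}\in L(w)$ as well. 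Hence $\{s_{k'},\ldots,s_{n-1}\}\subseteq L(w)$.

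Next I would compute $\counit(\tB_{b_{k'},w})$. For each $j\in\{k',\ldots,n\}$, the reduced expression $b_j=s_js_{j+1}\cdots s_{n-1}$ uses only simple reflections lying in $L(w)$. The $W$-graph identity $\C_s\C_w=[2]\C_w$ for $s\in L(w)$, combined with $T_s=\C_s-\ui$, gives $T_s\C_w=\u\,\C_w$, and iterating yields $T_{b_j}\C_w=\u^{n-j}\C_w$. Summing,
$$\counit(\tB_{b_{k'},w})=\sum_{i=0}^{k-1}\u^{-i}\u^{n-k'-i}\,\C_w=\sum_{i=0}^{k-1}\u^{k-1-2i}\,\C_w=[k]\,\C_w.$$
Since $\C_w$ lies in the cell of $X_\lambda$ labeled by $P_2$, $p(\C_w)=\C_w$ when $\ce$ is labeled by $P_2$ and $p(\C_w)=0$ otherwise. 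In the first case the $W$-graph isomorphism of Theorem \ref{t HJHcells} forces $P_1=P_2$ (without this the local labels mismatch and no such iso exists) and identifies $\C_w=\C_{b_{k'}\lj{w}{J}}$ with $\tB_{b_{k'},w}$ itself, so $\tau(\tB_{b_{k'},w})=[k]\,\tB_{b_{k'},w}$, giving $\tau=[k]\,\id$ by Schur; in the second case $\tau$ vanishes on $\tB_{b_{k'},w}$ and Schur gives $\tau=0$.

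The main obstacle is the descent-set input $\{s_{k'},\ldots,s_{n-1}\}\subseteq L(w)$, in particular $s_{n-1}\in L(w)$, which must be extracted from the combinatorial construction inside the proof of Theorem \ref{specialelementstheorem} rather than from its statement. A parallel combinatorial point---also internal to that proof---is the identification $\C_w=\C_{b_{k'}\lj{w}{J}}$ when $P_1=P_2$, which is obtained by tracking $Q$ through the identity $Q(b_{k'}\lj{w}{J})=Q^{*}_{<n}\leftarrow k'$ and checking that this row-insertion recovers $Q$ exactly. Once those two inputs are in place the rest is arithmetic: the $\u^{-i}$ coefficients from Proposition \ref{p easyCw} mesh perfectly with the exponents $\u^{n-j}$ coming from the descent computation to collapse to the $\u$-integer $[k]$.
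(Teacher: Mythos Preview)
Your approach is essentially the paper's: invoke Schur to reduce $\tau$ to a scalar, then evaluate on the explicit element from Theorem~\ref{specialelementstheorem}. Your extraction of $s_{n-1}\in L(w)$ from the construction of $Q$ is correct (in this application $\lambda^{(2)}=\lambda^{(1)}=\lambda$, so $n$ sits in row $k$ of $Q$ while $n-1$ ends row $k-1$), and your arithmetic collapsing $\sum_{i=0}^{k-1}\u^{k-1-2i}$ to $[k]$ is fine. The ``$0$ otherwise'' case is handled correctly.

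The confusion is in your final step, when $\ce$ is labeled by $P_2$. Two claims there are wrong:
\begin{itemize}
\item The assertion that the isomorphism $A\ce\cong A\Gamma$ ``forces $P_1=P_2$'' is false. In type $A$ all left cells of a given shape are isomorphic as $W$-graphs, so the isomorphism exists for \emph{any} cell $\ce$ of $X_\lambda$; this is exactly what the paper says just before (\ref{e [k]}).
\item The equality $\C_w=\C_{b_{k'}\lj{w}{J}}$ in $\H$ would require $w=b_{k'}\lj{w}{J}$, which does not hold in general. Your attempt to track $Q$ through $Q^*_{<n}\leftarrow k'$ does not establish this.
\end{itemize}
Neither claim is needed. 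The clean finish is: the chosen isomorphism $\iota:A\ce\to A\Gamma$ is a $W$-graph isomorphism, hence carries canonical basis elements to canonical basis elements. Thus $\iota(\C_w)$ is some canonical basis element $\tB_{\mathbf z}$ of $\Gamma$, and the Schur relation $c\,\tB_{b_{k'},w}=[k]\,\tB_{\mathbf z}$ together with $[k]\neq 0$ and linear independence of the canonical basis forces $\tB_{\mathbf z}=\tB_{b_{k'},w}$ and $c=[k]$. (Incidentally this also shows the choice of $W$-graph isomorphism is immaterial, since any two differ by a $W$-graph automorphism of $A\Gamma$, which by the same Schur argument must be the identity.)
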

\begin{proof}
By the discussion above, this composition must be $c \ \id$ for some $c \in A$.
Apply Theorem \ref{specialelementstheorem} with $\lambda^{(1)} = \lambda$, $\mu = \sh((P_1)_{<n})$, $P = P_2$, noting that from the construction of $w$ in the proof, $\{k',\ldots,n-1\}\subseteq L(w)$ in this case. Therefore \be \counit(\tB_{b_{k'},w}) = \left(T_{b_{k'}} + \ui T_{b_{k'+1}} + \ldots + \u^{-k+1}T_{b_n}\right)\C_w = [k]\C_w.\ee
\end{proof}


It is tempting to conjecture that $\counit(\tB_{b_{k'}, w})$ is a constant times a canonical basis element of $\H$, where $(b_{k'},w)$ is as constructed in Theorem \ref{specialelementstheorem}, but this is false in general. The following counterexample was found using Magma.
\begin{example}Let $n=6$, $k=2$, $k'=4$, $w=521634\xrightarrow{\tiny{RSK}}\left({\tiny\young(134,26,5), \ \young(146,25,3)}\right)$. Then,
\be \tB_{b_{k'},
w} = \left(T_{b_4} + \ui T_{b_5} + \u^{-2}\right)\bt \C_{521634} \stackrel{\vphantom{}^\counit}{\longmapsto} [2]\C_{521643} + [2]\C_{321654}.\ee
The element $(b_{k'},w) \in W \stackrel{J}{\times} {W}$ is $(421653,521634)$ in stuffed notation and the cell containing it has local label  ${\tiny\young(13,25,46)}$ . The labels of the cells containing $521643, 321654$ are ${\tiny\young(13,24,56),\young(14,25,36)}$ respectively.
\end{example}

\section{Some $W$-graph versions of tensoring with the defining representation}
\label{s tensor V}

Let $V$ denote the $n$-dimensional defining representation of $\S_n$: $V = \ZZ\{ x_1,\ldots,x_n\}$, $s_i(x_j) = x_{s_i(j)}$. In this section, we will explore three $W$-graph versions of tensoring with $V$.  We then look at $W$-graphs corresponding to tensoring twice with $V$ and show that these decompose into a reduced and non-reduced part. We make a habit of checking what our $W$-graph constructions become at $\u = 1$ in order to keep contact with our intuition for this more familiar case.

\subsection{}

In what follows, $E$ denotes an $\H$-module or $\ZZ \S_n$-module, depending on context. A useful observation, and indeed, what motivated us to study inducing $W$-graphs is that $V\tsr E\cong \ZZ\S_n\tsr_{\ZZ\S_{n-1}}E$ for any $\ZZ\S_n$-module $E$. This is well-known, but the proof is instructive.

\begin{proposition}
\label{p gk}
Given a finite group $G$, a subgroup $K$, and a $\ZZ(G)$-module $E$, there is a ($\ZZ G$-module) isomorphism, natural in $E$
\begin{equation}
\label{e gk}
\begin{array}{ccc}
\ZZ G\tsr_{\ZZ K}E & \cong & \left(\ZZ G\tsr_{\ZZ K}\ZZ\right) \tsr_{\ZZ} E,\\
g\bt e & \to & (g\bt 1)\tsr ge,
\end{array}
\end{equation}
where $\ZZ$ denotes the trivial representation of $K$.
\end{proposition}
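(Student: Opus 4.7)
The plan is to verify directly that the proposed formula defines a well-posed $\ZZ G$-module homomorphism and then to exhibit an explicit inverse; naturality in $E$ will follow automatically from the explicit formulas, so there will be little to say about it at the end.

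First, I would check that the assignment $(g,e) \mapsto (g \bt 1) \tsr ge$ is $\ZZ K$-balanced so that it descends to a $\ZZ$-linear map out of $\ZZ G \tsr_{\ZZ K} E$. For $k \in K$, the pair $(gk,e)$ and the pair $(g,ke)$ should go to the same element: indeed $(gk \bt 1) \tsr (gk)e = (g \bt k\cdot 1) \tsr g(ke) = (g \bt 1) \tsr g(ke)$, using that $K$ acts trivially on $\ZZ$. Next, I would verify left $\ZZ G$-linearity: for $g' \in G$, the element $g'(g \bt e) = g'g \bt e$ maps to $(g'g \bt 1) \tsr g'ge$, which agrees with the diagonal action of $g'$ on $(g \bt 1) \tsr ge$ in the right-hand side of (\ref{e gk}).

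Next, I would construct the inverse by the formula $(g \bt 1) \tsr v \mapsto g \bt g^{-1}v$. The key point is that this descends through the coequalizer defining $\ZZ G \tsr_{\ZZ K} \ZZ$: replacing $g$ by $gk$ must give the same output, and indeed $gk \bt (gk)^{-1}v = gk \bt k^{-1}g^{-1}v = g \bt k \cdot k^{-1}g^{-1}v = g \bt g^{-1}v$ in $\ZZ G \tsr_{\ZZ K} E$. A direct calculation on generators shows that the two composites with the forward map are the identity.

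Naturality in $E$ amounts to checking, for any $\ZZ G$-module map $\phi : E \to E'$, that $(g \bt 1) \tsr g\phi(e) = (g \bt 1) \tsr \phi(ge)$, which is just $G$-equivariance of $\phi$. There is no real obstacle in the argument; the only mild pitfall is keeping straight which tensor products are over $\ZZ K$ versus over $\ZZ$ and remembering that the right-hand side of (\ref{e gk}) carries the diagonal $G$-action, with $\ZZ G \tsr_{\ZZ K} \ZZ$ being the permutation module on the coset space $G/K$.
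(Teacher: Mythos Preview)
Your proof is correct and follows essentially the same approach as the paper: the paper checks well-definedness of the forward map by noting that $gk \bt k^{-1}e$ and $g \bt e$ map to the same element, exhibits the same inverse $(g \bt 1) \tsr e \mapsto g \bt g^{-1}e$, and remarks that these maps intertwine the $G$-action. Your version is simply more explicit about each verification (including naturality), but there is no substantive difference in strategy.
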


\begin{proof}
The expressions $g k\bt k^{-1} e$ and $g \bt e$ ($k \in K$) are sent to the same element so this map is well-defined.   Similarly, its inverse $(g \bt 1) \tsr e \mapsto g\bt g^{-1} e$ is well-defined.  These maps clearly intertwine the action of $G$.
\end{proof}

Maintain the notation $W = \S_n$, $J_{n-1} = \{s_1,\ldots,s_{n-2}\}$, $J'_{n-1} = \{s_2,\ldots,s_{n-1}\}$  of the previous sections. Recall that $b_k = s_{k} \ldots s_{n-1}$ for $k \in [n-1]$, $b_n = 1$ are the minimal left coset representatives of $W_{J_{n-1}}$, and $a_k = s_{k-1}\ldots s_{1}$ for $k\in \{2,\ldots,n\}$, $a_1 = 1$ the minimal left coset representatives of $W_{J'_{n-1}}$.

\begin{corollary}\label{c indresiso}
For the inclusions $\S_{n-1}=W_{J'_{n-1}} \hookrightarrow W = \S_n$ and $\S_{n-1}=W_{J_{n-1}} \hookrightarrow W = \S_n$, we have $ \ZZ\S_n\tsr_{\ZZ\S_{n-1}}E \cong V\tsr_\ZZ E$ for any $\ZZ\S_n$-module $E$.
\end{corollary}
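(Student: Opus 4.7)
The strategy is to invoke Proposition \ref{p gk} to reduce the claim to a statement about the trivial subrepresentation, and then to identify $\ZZ\S_n \tsr_{\ZZ\S_{n-1}} \ZZ$ with $V$ using the standard action of $\S_n$ on $[n]$. Concretely, setting $G = \S_n$ and $K = \S_{n-1}$ (under either embedding), Proposition \ref{p gk} yields a natural isomorphism
\be \ZZ\S_n \tsr_{\ZZ\S_{n-1}} E \;\cong\; \bigl(\ZZ\S_n \tsr_{\ZZ\S_{n-1}} \ZZ\bigr) \tsr_\ZZ E, \ee
so it suffices to produce a $\ZZ\S_n$-module isomorphism $\ZZ\S_n \tsr_{\ZZ\S_{n-1}} \ZZ \cong V$ for each of the two embeddings.

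For the embedding $\S_{n-1} = W_{J'_{n-1}} = \langle s_2, \ldots, s_{n-1} \rangle$, observe that $W_{J'_{n-1}}$ is exactly the stabilizer of $1$ under the defining action of $\S_n$ on $[n]$. The minimal left coset representatives $a_1, \ldots, a_n$ (with $a_k = s_{k-1} \cdots s_1$ and $a_1 = 1$) satisfy $a_k(1) = k$, so the cosets $a_k W_{J'_{n-1}}$ are in bijection with $[n]$ via $a_k W_{J'_{n-1}} \leftrightarrow k$. The map
\be a_k \bt 1 \;\longmapsto\; x_k, \qquad k \in [n], \ee
is thus a well-defined $\ZZ$-linear bijection, and it intertwines the $\S_n$-action because for $w \in \S_n$ the element $w a_k$ lies in the coset $a_{w(k)} W_{J'_{n-1}}$, so $w \cdot (a_k \bt 1) = a_{w(k)} \bt 1 \mapsto x_{w(k)} = w \cdot x_k$. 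The argument for the embedding $W_{J_{n-1}} = \langle s_1, \ldots, s_{n-2} \rangle$ is identical, using that $W_{J_{n-1}}$ stabilizes $n$ and that the minimal coset representatives $b_k$ satisfy $b_k(n) = k$; the isomorphism is $b_k \bt 1 \mapsto x_k$.

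Combining the two isomorphisms gives the result. There is no real obstacle: the content is simply the classical identification of the coset space $\S_n/\S_{n-1}$ with $[n]$, plus the tensor-hom style manipulation of Proposition \ref{p gk}. The only point worth verifying carefully is that the two distinct embeddings both lead to point stabilizers in the defining action, which is immediate from the fact that $J'_{n-1}$ and $J_{n-1}$ each omit exactly one simple reflection from opposite ends of the Dynkin diagram.
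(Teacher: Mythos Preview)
Your proof is correct and follows exactly the same approach as the paper: apply Proposition \ref{p gk} to reduce to showing $\ZZ\S_n \tsr_{\ZZ\S_{n-1}} \ZZ \cong V$, and then use the explicit minimal coset representatives $a_k$ (resp. $b_k$) to give the isomorphism $a_k \bt 1 \mapsto x_k$ (resp. $b_k \bt 1 \mapsto x_k$). Your write-up is simply more detailed in checking that these maps are $\S_n$-equivariant.
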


\begin{proof}
Put $G=\S_n$. If $K=W_{J'_{n-1}}$, then $\ZZ G\tsr_{\ZZ K} \ZZ\cong V$ by $a_i\bt 1\mapsto x_i$. If $K=W_{J_{n-1}}$, then $\ZZ G\tsr_{\ZZ K} \ZZ\cong V$ by $b_i\bt 1\mapsto x_i$.
\end{proof}

The Hecke algebra is not a Hopf algebra in any natural way, so it is not clear what a Hecke algebra analogue of $F\tsr E$ should be for $F,E$ $\ZZ\S_n$-modules. If $F= V$, however, then $\H \tsr_J E$ is a $\u$-analogue of $\ZZ\S_n\tsr_{\ZZ\S_{n-1}}E\cong V\tsr E$, where $r$ is either $n-1$ or $1$ (and $J = J_r \cup J'_{n-r}$).  These choices for $r$  give isomorphic representations at $\u=1$, but do not give isomorphic $W$-graphs in general.

\begin{example} \label{ex 2wgraph}

Let $e^{+}$ be the trivial representation of $\H$. Then compare $\H\tsr_{{J'_{n-1}}}e^{+}$ (first row) with $\H\tsr_{{J_{n-1}}}e^{+}$ (second row) for $n=4$ :
$$
\xymatrix{
{\tB_{a_4,e^{+}}}^{1,2,3} & {\tB_{a_3,e^{+}}}^{1,2} \ar[l] \ar@{-}[r] & {\tB_{a_2,e^{+}}}^{1,3} \ar@{-}[l] \ar@{-}[r] & {\tB_{a_1,e^{+}}}^{2,3} \ar@{-}[l]
}$$
$$\xymatrix{
{\tB_{b_1,e^+}}^{1,2,3} & {\tB_{b_2,e^+}}^{2,3} \ar[l] \ar@{-}[r] & {\tB_{b_3,e^+}}^{1,3} \ar@{-}[l] \ar@{-}[r] & {\tB_{b_4,e^+}}^{1,2} \ar@{-}[l]
}$$
Evidently, these are not isomorphic as $W$-graphs.

In this paper $W$-graphs are drawn with the following conventions: vertices are labeled by canonical basis elements and descent sets appear as superscripts; an edge with no arrow indicates that $\mu =1$ and neither descent set contains the other; an edge with an arrow indicates that $\mu =1$ and the descent set of the arrow head strictly contains that of the arrow tail; no edge indicates that $\mu = 0$ or the descent sets are the same.
\end{example}

For the remainder of this paper, let $J = J'_{n-1}$ ($r$ = 1) since this is
preferable for comparing $\H \tsr_J E$ with $(\pH \tsr_\H E)_1$ (see  \textsection\ref{ss affine tensor V}, below).
See  \textsection\ref{ss sb} for how to go back and forth between the $J'_{n-1}$ and $J_{n-1}$ pictures.

\subsection{}
\label{ss affine tensor V}
There is another $\u$-analogue of tensoring with $V$ that comes from the extended affine Hecke algebra $\eH$.  See \textsection \ref{ss affinecells} for a brief introduction to this algebra.

The module $\pH \tsr_\H E$ is a $\u$-analogue of $\ZZ \pW \tsr_{\ZZ \pW_{K_0}} E$, which, together with the following proposition, shows that $(\pH \tsr_\H E)_1$ is a $\u$-analogue of $V \tsr E$.
\begin{proposition}
\label{p affine u=1}
The correspondence
\be \begin{array}{ccc}
\Res_{\ZZ\S_n} \ZZ \pW \tsr_{\ZZ \pW_{K_0}} E & \cong &\ZZ[{x_1},\ldots, {x_n}] \tsr_\ZZ E,\\
y^\lambda \bt e & \longleftrightarrow & x^\lambda \tsr e,
\end{array}
\ee is a degree-preserving isomorphism of $\ZZ \S_n$-modules, natural in $E$, where $\S_n$ acts on the polynomial ring by permuting the variables.
\end{proposition}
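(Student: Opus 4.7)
The structural input is the semidirect product decomposition $\pW = \pY \rtimes W$ together with the identification $\pW_{K_0} = W$: every element of $\pW$ has a unique expression $y^\lambda v$ with $\lambda \in \pY$ and $v \in W$, so $\{y^\lambda : \lambda \in \pY\}$ is a complete set of representatives for the left cosets of $W$ in $\pW$. Consequently every element of $\ZZ\pW \tsr_{\ZZ W} E$ is uniquely a finite sum $\sum_\lambda y^\lambda \bt e_\lambda$ with $e_\lambda \in E$. This observation is essentially a non-Hopf analogue of the equivariant splitting used in Proposition \ref{p gk}, adapted to the case where $E$ is only a module over the subgroup.

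Using this, I would define $\phi: \ZZ[x_1,\ldots,x_n] \tsr_\ZZ E \to \ZZ\pW \tsr_{\ZZ W} E$ by $x^\lambda \tsr e \mapsto y^\lambda \bt e$, extended bilinearly. The uniqueness in the previous paragraph immediately gives a two-sided $\ZZ$-linear inverse $\sum_\lambda y^\lambda \bt e_\lambda \mapsto \sum_\lambda x^\lambda \tsr e_\lambda$. Naturality in $E$ is clear, and degree preservation is built in: $|\lambda|$ is the defining $\pH$-degree of $y^\lambda \bt e$ on one side and the polynomial degree of $x^\lambda$ on the other.

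The content of the proposition is then to check $\ZZ\S_n$-equivariance. For this, the single fact needed is the commutation relation in the semidirect product, namely $v\, y^\lambda = y^{v\lambda}\, v$ for $v \in W$ and $\lambda \in \pY$. This gives
\[
v \cdot (y^\lambda \bt e) = y^{v\lambda} v \bt e = y^{v\lambda} \bt v e,
\]
which matches the diagonal action $v \cdot (x^\lambda \tsr e) = x^{v\lambda} \tsr v e$ on the polynomial side, once one notes that the convention $v(x_j) = x_{v(j)}$ yields $v(x^\lambda) = x^{v\lambda}$ with $(v\lambda)_i = \lambda_{v^{-1}(i)}$ (the same convention governing the $W$-action on $\pY$).

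There is no real obstacle. The only thing to watch is that the three conventions in play (the $\S_n$-action on variables, the $W$-action on $\pY$, and the commutation $v y^\lambda = y^{v\lambda} v$ inside $\pW$) are the same convention under the identifications $y_i \leftrightarrow x_i$ and $\lambda \leftrightarrow \sum \lambda_i \epsilon_i$; once this is granted, each verification collapses to a one-line calculation.
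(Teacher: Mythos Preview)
Your proposal is correct and follows essentially the same approach as the paper: the paper's proof is the one-line equivariance check $s_i(y^\lambda \bt e) = s_i y^\lambda s_i \bt s_i e = y^{s_i(\lambda)} \bt s_i e$, which is precisely your commutation-relation argument specialized to generators. Your write-up is simply more explicit about the coset decomposition, the inverse map, and degree preservation, none of which the paper spells out.
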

\begin{proof}
Recalling that $\eW = Y \rtimes W$ with $W$ acting on $Y$ by permuting the coordinates, we have $s_i (y^\lambda \bt e) = s_i y^\lambda s_i \bt s_i e = y^{s_i(\lambda)} \bt s_i e$ and $s_i(x^\lambda \tsr e) = x^{s_i( \lambda)} \tsr s_i e$. 
\end{proof}

\begin{example}
 To compare with the $W$-graphs in Example \ref{ex 2wgraph}, here is the $W$-graph on $(\pH \tsr_\H e^{+})_1$.  In this case it is isomorphic to the $W$-graph on $\H\tsr_{{J'_{n-1}}}e^{+}$, but this is not true in general as can be seen by comparing Figures \ref{f VVe+} and \ref{f VVe+ affine}.
$$\xymatrix{
{\tB_{a_4\pi,e^{+}}}^{1,2,3} & {\tB_{a_3\pi,e^{+}}}^{1,2} \ar[l] \ar@{-}[r] & {\tB_{a_2\pi,e^{+}}}^{1,3} \ar@{-}[l] \ar@{-}[r] & {\tB_{a_1\pi,e^{+}}}^{2,3} \ar@{-}[l]}.$$
\end{example}

  The general relationship between $(\pH \tsr_\H E)_1$ and $\H\tsr_{{J'_{n-1}}}E $ can be explained as a special case of a $W$-graph version of Mackey's formula due to Howlett and Yin \cite[\textsection 5]{HY2}, which we now recall.

Let $\Gamma$ be a $W_I$-graph, and $K, I \subseteq S$.  Put $F = A \Gamma$.
Let $\leftexp{K}{W}^I$ be the set of minimal double coset representatives $\{d : d$ of minimal length in   $W_KdW_I\}$. For each $d\in \leftexp{K}{W}^I$, the \emph{d-subgraph} of (the $W_K$-graph on) $\Res_K\H \tsr_I F$ is $\{\tB_{wd,\gamma} :\ w\in W_K^L,  L = {K\cap dId^{-1}}, \gamma\in\Gamma\}$.

For any $d \in \leftexp{K}{W}^I$, let $L = K \cap dId^{-1}$.  Then $d^{-1} L d = d^{-1} Kd \cap I \subseteq I$ so the restriction $\Res_{d^{-1}Ld} F$ makes sense.  This $W_{d^{-1}Ld}$-graph naturally gives rise to a $W_L$-graph, denoted $d\Gamma$, obtained by conjugating descent sets by $d$. Explicitly, the descent set of a vertex $d\gamma$ of $d\Gamma$ is
\be L(d\gamma) = \{dsd^{-1}: s \in L(\gamma) \subseteq I \text{ and } dsd^{-1} \in K\} \subseteq L. \ee
The edge weights of $d\Gamma$ are the same as those of $\Gamma:\ \mu(d\delta,d\gamma) = \mu(\delta,\gamma)$ for all $\delta, \gamma \in \Gamma$.

\begin{theorem}[Howlett, Yin \cite{HY2}]\label{t mackey}
The $d$-subgraphs of $\Res_K\H \tsr_I F$  partition its canonical basis. Each $d$-subgraph is a union of cells and is isomorphic to $\H_K \tsr_L dF$ ($L = K \cap dId^{-1}$) as a $W_K$-graph via the correspondence $\tB_{wd,\gamma}\leftrightarrow\tB_{w,d\gamma}, w \in W^L_K$.
\end{theorem}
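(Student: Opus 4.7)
The plan is to combine the standard Coxeter-theoretic decomposition of $W^I$ with the IC basis machinery of Theorem~\ref{t IC basis} and Proposition~\ref{p ic lower ideal}. First, the classical fact $W^I = \coprod_{d \in \leftexp{K}{W}^I} W_K^L d$ (with $L = K \cap dId^{-1}$), where each $x \in W^I$ factors uniquely as a reduced product $wd$ with $w \in W_K^L$, partitions the standard basis $\{T_x\bt\gamma\}$ of $\H\tsr_I F$. Since the canonical basis $\{\tB_{x,\gamma}\}$ is unitriangularly related to the standard basis with respect to the order $\ord$ of \textsection\ref{ss IC basis}, this partition lifts to a partition of the canonical basis into the $d$-subgraphs, which establishes the first assertion.

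Next, I would order $\leftexp{K}{W}^I$ by Bruhat and set $M_{d'} := A\{T_{wd'}\bt\gamma : w \in W_K^{L_{d'}},\ \gamma \in \Gamma\}$ and $\tE_{\leq d} := \bigoplus_{d' \leq d} M_{d'}$. I would verify that $\tE_{\leq d}$ is a lower $\ord$-ideal --- using $\lJ{x}{I} \le x$ in Bruhat and the fact that the double coset of $\lJ{x}{I}$ coincides with that of $x$ --- and that $\tE_{\leq d}$ is stable under the bar involution and under the restricted $\H_K$-action. $\H_K$-stability is the key computational input: it follows from the identity $T_v T_d = T_d T_{d^{-1}vd}$ for $v \in W_L$, itself a consequence of the reduced factorization $vd = d \cdot d^{-1}vd$ (valid since $d^{-1}vd \in W_I$ and $d$ is minimal in its double coset).

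The main obstacle is matching canonical bases. By Proposition~\ref{p ic lower ideal} applied to $\tE_{\leq d}$ followed by the IC construction on the quotient standard basis, the subquotient $\tE_{\leq d}/\tE_{<d}$ has IC basis $\{[\tB_{wd,\gamma}] : w \in W_K^L,\ \gamma \in \Gamma\}$. I would then build an $A$-linear map $\phi: \H_K\tsr_L dF \to \tE_{\leq d}/\tE_{<d}$ by $h\bt d\gamma \mapsto hT_d\bt\gamma \mod \tE_{<d}$; well-definedness over $\H_L$ matches the twisted $\H_L$-action $T_v(d\gamma) = d(T_{d^{-1}vd}\gamma)$ on $dF$ via the identity above, and $\H_K$-linearity is immediate. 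The subtle point is that $\phi$ does not intertwine bar on the nose because $\br{T_d}\ne T_d$ in general; however, the discrepancy $\br{T_w}(\br{T_d} - T_d)\bt\gamma$ lies in $\tE_{<d}$, since $\br{T_d} - T_d$ is supported on Bruhat-smaller $x < d$ which lie in strictly smaller double cosets (as $d$ is minimal in its own double coset). Thus $\phi$ intertwines the induced bar modulo $\tE_{<d}$, and uniqueness of IC bases (Theorem~\ref{t IC basis}) forces $\phi(\tB_{w,d\gamma}) = [\tB_{wd,\gamma}]$. With canonical bases matched, the claimed $W_K$-graph isomorphism follows, and ``union of cells'' follows from the $\H_K$-stability of $\tE_{\leq d}$.
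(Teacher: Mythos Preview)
The paper does not supply its own proof of this theorem; it is quoted from \cite{HY2} and followed immediately by a Remark noting that the stronger ``cellular subquotient'' property is \emph{probably} true but \emph{not} established there.

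Your filtration-by-double-cosets argument is essentially correct and, interestingly, actually yields that stronger conclusion. The key steps all check out: the decomposition $W^I=\coprod_d W_K^{L_d}d$ is standard; each $M_{d'}$ is $\H_K$-stable because left multiplication by $\H_K$ preserves the double coset $W_K d' W_I$ (and rewriting $T_x\bt\gamma$ with $x\notin W^I$ only changes the $\Gamma$-component); the Bruhat-lower-ideal claim holds because any $x<d$ has minimal double coset representative $d_x\le x<d$, hence $d_x<d$ strictly (using that $d$ is minimal in its own coset); and the bar-compatibility of $\phi$ modulo $\tE_{<d}$ then follows exactly as you say. With $\phi$ matching standard bases, intertwining bar on the quotient, and being $\H_K$-linear, the IC uniqueness forces the canonical bases to match.

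One small imprecision: you write that ``$\H_K$-stability \ldots\ follows from the identity $T_vT_d=T_dT_{d^{-1}vd}$ for $v\in W_L$,'' but that identity only covers $v\in W_L$ and is really what you need for well-definedness of $\phi$ over $\H_L$. The $\H_K$-stability of $\tE_{\le d}$ needs the (easier) observation that $T_s$ for $s\in K$ sends $T_x\bt\gamma$ into the span of $T_y\bt\delta$ with $y$ in the same double coset as $x$. With that adjusted, your argument gives a self-contained proof of Theorem~\ref{t mackey} and, via the filtration, upgrades ``union of cells'' to ``cellular subquotient,'' addressing the point raised in the paper's subsequent Remark.
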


\begin{remark}
It is probably the case that each $d$-subgraph is a cellular subquotient rather than just a union of cells, however this is not proven in \cite{HY2}. This issue does not come up, however, because in the applications in this paper we can easily show that the $d$-subgraph is a cellular subquotient and sometimes the stronger statement that it is a cellular quotient or submodule.
\end{remark}
In the present application, put $K = I = \{s_1,\ldots,s_{n-1}\}$.  Then $\pi \in \leftexp{K}{W}^I$ and the $\pi$-subgraph of $\Res_\H \eH \tsr_\H E$ is $\{\tB_{a_k \pi,\gamma} : k\in [n],\gamma\in\Gamma\}$ since $K \cap \pi I \pi^{-1} = J'_{n-1}$. This is isomorphic as a $W$-graph to $\H \tsr_{J'_{n-1}} \pi E$.  The $W_{J'_{n-1}}$-graph $\pi E$ is just $\Res_{J_{n-1}} E$, with each element of its descent sets shifted up by one.  We have proved the following.
\begin{proposition}\label{p affine equals resind}
The $W$-graphs  $(\pH \tsr_\H E)_1$ and $\H \tsr_{J'_{n-1}} \pi E$ are isomorphic.
\end{proposition}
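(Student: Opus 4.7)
The plan is to obtain this as a direct consequence of the $W$-graph Mackey formula (Theorem \ref{t mackey}) in the extended affine setting. As noted in \textsection\ref{ss affinecells} the Howlett--Yin machinery carries over verbatim to extended Coxeter groups, so I would apply Theorem \ref{t mackey} to $\Res_{\H} \eH \tsr_{\H} E$ with $K = I := K_0 = \{s_1, \ldots, s_{n-1}\}$, decomposing it as a disjoint union of $d$-subgraphs indexed by $d \in \leftexp{K_0}{\eW}^{K_0}$, and then pick out the subgraph that matches $(\pH \tsr_{\H} E)_1$.

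The main computation is to identify the relevant $d$. Since $\ell(\pi) = 0$, we have $\pi \in \leftexp{K_0}{\eW}^{K_0}$. Using the relation $s_i \pi = \pi s_{i-1}$ (indices mod $n$), I would compute $\pi K_0 \pi^{-1} = \{s_0, s_2, \ldots, s_{n-1}\}$, whence $L := K_0 \cap \pi K_0 \pi^{-1} = \{s_2, \ldots, s_{n-1}\} = J'_{n-1}$. The minimal coset representatives $W_{K_0}^{L} = W^{J'_{n-1}}$ are precisely $\{a_k : k \in [n]\}$, so the $\pi$-subgraph equals $\{\tB_{a_k\pi, \gamma} : k \in [n], \gamma \in \Gamma\}$, which by definition is the $A$-basis of $(\pH \tsr_{\H} E)_1$. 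Mackey then provides a $W$-graph isomorphism $(\pH \tsr_{\H} E)_1 \cong \H \tsr_{J'_{n-1}} \pi E$ via $\tB_{a_k\pi, \gamma} \leftrightarrow \tB_{a_k, \pi\gamma}$.

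The last step is to unwind the definition of the $W_{J'_{n-1}}$-graph $\pi E$. Since $\pi^{-1} L \pi = J_{n-1}$, its underlying $A$-module is $\Res_{J_{n-1}} E$; the descent sets are obtained by the index shift $s_{i-1} \mapsto s_i$ (induced by $\pi$-conjugation), and the edge weights carry over unchanged. The only real obstacle is verifying that Theorem \ref{t mackey} extends verbatim to extended Coxeter groups; granted that extension, the entire proposition is essentially an unwinding of definitions and a book-keeping of indices.
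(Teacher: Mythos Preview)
Your proposal is correct and follows essentially the same route as the paper: apply the Mackey formula (Theorem~\ref{t mackey}) with $K=I=K_0$, identify $\pi$ as a minimal double coset representative with $L=K_0\cap \pi K_0\pi^{-1}=J'_{n-1}$, and read off that the $\pi$-subgraph equals $(\pH\tsr_\H E)_1\cong \H\tsr_{J'_{n-1}}\pi E$. The paper's argument is terser but identical in substance, and your caveat about extending Theorem~\ref{t mackey} to the extended affine setting is already handled by the remarks in \textsection\ref{ss affinecells}.
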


\begin{remark}
  Though this suggests that  the $W$-graph versions of $V \tsr E$,  $(\pH \tsr_\H E)_1$ and $\H \tsr_{J'_{n-1}} E$, behave in essentially the same way, some care must be taken. At $\u =1$, $\H \tsr_{J'_{n-1}} \pi E$ is not isomorphic to $V \tsr_\ZZ \pi E$ using Proposition \ref{p gk} since $\pi E$ is only a $\ZZ W_{J'_{n-1}}$-module, not a $\ZZ W$-module. Thus $(\pH \tsr_\H E)_1|_{\u=1}$ and $(\H \tsr_{J'_{n-1}} E)|_{\u=1}$ are only isomorphic to $V \tsr E$ by the rather different looking routes Proposition \ref{p affine u=1} and Corollary \ref{c indresiso}.
\end{remark}
\subsection{}

\label{ss reduced non-reduced}

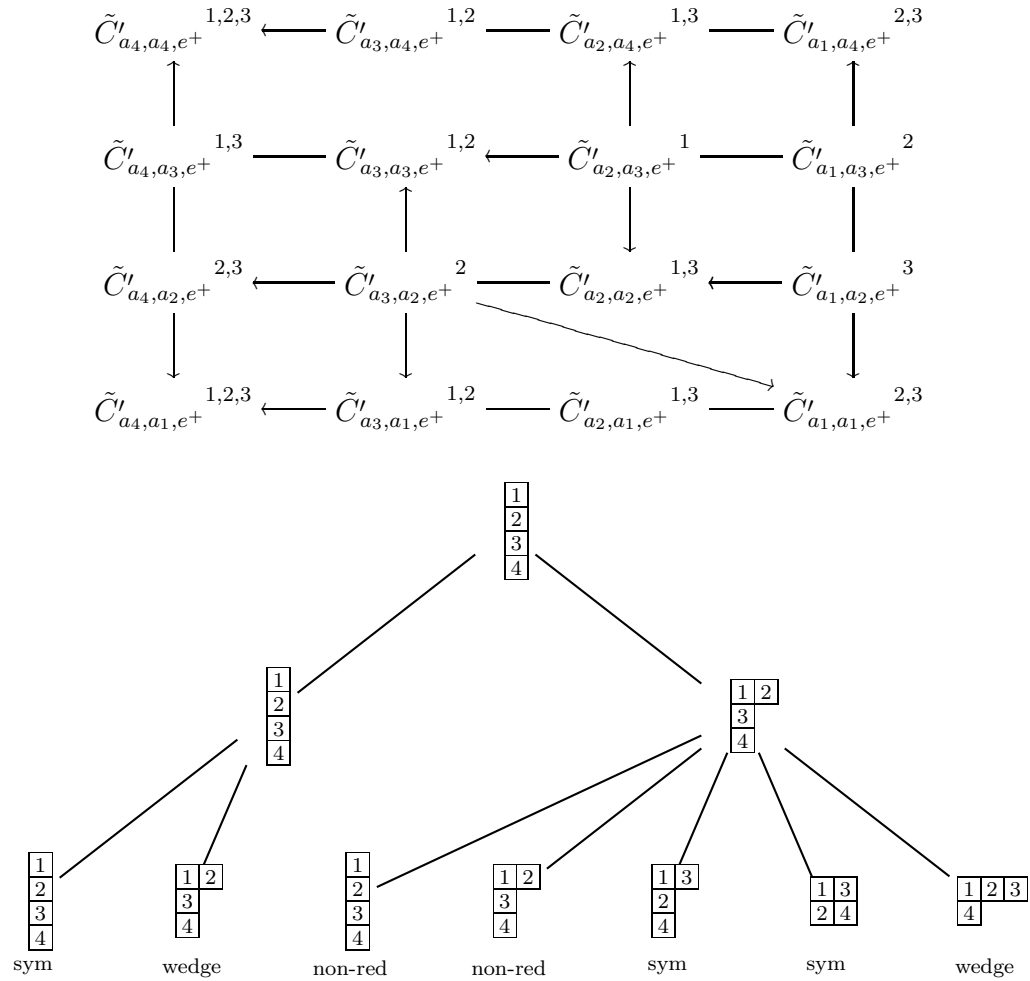
\begin{figure}
\begin{center}
$$
\xymatrix{
{\tB_{a_4,a_4,e^{+}}}^{1,2,3} & {\tB_{a_3,a_4,e^{+}}}^{1,2} \ar[l] \ar@{-}[r] & {\tB_{a_2,a_4,e^{+}}}^{1,3} \ar@{-}[l] \ar@{-}[r] & {\tB_{a_1,a_4,e^{+}}}^{2,3} \ar@{-}[l] \\
{\tB_{a_4,a_3,e^{+}}}^{1,3} \ar[u] \ar@{-}[r] & {\tB_{a_3,a_3,e^{+}}}^{1,2} \ar@{-}[l]  & {\tB_{a_2,a_3,e^{+}}}^{1} \ar[l] \ar@{-}[r] \ar[u] \ar[d] & {\tB_{a_1,a_3,e^{+}}}^{2} \ar@{-}[l] \ar[u] \ar@{-}[d]\\
{\tB_{a_4,a_2,e^{+}}}^{2,3} \ar[d] \ar@{-}[u] & {\tB_{a_3,a_2,e^{+}}}^{2} \ar[l] \ar[u] \ar@{-}[r] \ar[drr] \ar[d] & {\tB_{a_2,a_2,e^{+}}}^{1,3} & {\tB_{a_1,a_2,e^{+}}}^{3} \ar[l] \ar[l] \ar@{-}[u] \ar[d]\\
{\tB_{a_4,a_1,e^{+}}}^{1,2,3} & {\tB_{a_3,a_1,e^{+}}}^{1,2} \ar[l] \ar@{-}[r] & {\tB_{a_2,a_1,e^{+}}}^{1,3} \ar@{-}[l] \ar@{-}[r] & {\tB_{a_1,a_1,e^{+}}}^{2,3} \ar@{-}[l]\\
}$$
\end{center}
\begin{pspicture}(10,6.7){
\psset{unit=1cm}
\tiny
\newdimen\hcent
\hcent=5cm
\newdimen\ycor
\ycor=5.7cm
\advance\ycor by 0pt
\newdimen\hspacedim
\hspacedim=100pt
\newdimen\xcor
\xcor=\hcent
\newdimen\temp
\temp=140pt
\multiply \temp by 0 \divide \temp by 2
\advance\xcor by -\temp
\hoogte=9pt
\breedte=9pt
\dikte=0.2pt
\rput(\xcor,\ycor){\rnode{v1h1}
{
\begin{Young}
1\cr2\cr3\cr4\cr
\end{Young}
}
}
\advance\xcor by \hspacedim
\advance\ycor by -70pt
\hspacedim=180pt
\newdimen\xcor
\xcor=\hcent
\newdimen\temp
\temp=180pt
\multiply \temp by 1 \divide \temp by 2
\advance\xcor by -\temp

\rput(\xcor,\ycor){\rnode{v2h1}
{
\begin{Young}
1\cr2\cr3\cr4\cr
\end{Young}
}
}

\advance\xcor by \hspacedim
\rput(\xcor,\ycor){\rnode{v2h2}
{
\begin{Young}
1&2\cr3\cr4\cr
\end{Young}
}
}

\advance\xcor by \hspacedim
\advance\ycor by -70pt
\newdimen\hspacedim
\hspacedim=60pt
\newdimen\xcor
\xcor=\hcent
\newdimen\temp
\temp=60pt
\multiply \temp by 6 \divide \temp by 2
\advance\xcor by -\temp

\newdimen\ytmp
\ytmp=-0.1cm

\rput(\xcor,\ycor){\rnode{v3h1}
{
\begin{Young}
1\cr2\cr3\cr4\cr
\end{Young}
}
}
\rput(\xcor,\ytmp){sym}

\advance\xcor by \hspacedim
\rput(\xcor,\ycor){\rnode{v3h2}
{
\begin{Young}
1&2\cr3\cr4\cr
\end{Young}
}
}
\rput(\xcor,\ytmp){wedge}

\advance\xcor by \hspacedim
\rput(\xcor,\ycor){\rnode{v3h3}
{
\begin{Young}
1\cr2\cr3\cr4\cr
\end{Young}
}
}
\rput(\xcor,\ytmp){non-red}

\advance\xcor by \hspacedim

\rput(\xcor,\ycor){\rnode{v3h4}
{
\begin{Young}
1&2\cr3\cr4\cr
\end{Young}
}
}
\rput(\xcor,\ytmp){non-red}

\advance\xcor by \hspacedim
\rput(\xcor,\ycor){\rnode{v3h5}
{
\begin{Young}
1&3\cr2\cr4\cr
\end{Young}
}
}
\rput(\xcor,\ytmp){sym}

\advance\xcor by \hspacedim
\rput(\xcor,\ycor){\rnode{v3h6}
{
\begin{Young}
1&3\cr2&4\cr
\end{Young}
}
}
\rput(\xcor,\ytmp){sym}

\advance\xcor by \hspacedim
\rput(\xcor,\ycor){\rnode{v3h7}
{
\begin{Young}
1&2&3\cr4\cr
\end{Young}
}
}
\rput(\xcor,\ytmp){wedge}

\ncline{-}{v1h1}{v2h1}
\ncline{-}{v1h1}{v2h2}
\ncline{-}{v2h1}{v3h1}
\ncline{-}{v2h1}{v3h2}
\ncline{-}{v2h2}{v3h3}
\ncline{-}{v2h2}{v3h4}
\ncline{-}{v2h2}{v3h5}
\ncline{-}{v2h2}{v3h6}
\ncline{-}{v2h2}{v3h7}
}
\end{pspicture}
\caption{The $W$-graph on $\H \tsr_{J'_{n-1}} \H \tsr_{J'_{n-1}} e^+$ and the graph $G$ of \textsection \ref{ss locallabels}.  The vertices of the tree $G$ are marked by local labels. Each cell in the $W$-graph corresponds to the path from a leaf to the root that is its local sequence.}
\label{f VVe+}
\end{figure}

\begin{figure}
$$
\xymatrix{
{\tB_{a_4\pi,a_4\pi,e^{+}}}^{1,2,3} & {\tB_{a_3\pi,a_4\pi,e^{+}}}^{1,2} \ar[l] \ar@{-}[r] & {\tB_{a_2\pi,a_4\pi,e^{+}}}^{1,3} \ar@{-}[l] \ar@{-}[r] & {\tB_{a_1\pi,a_4\pi,e^{+}}}^{2,3}\\
{\tB_{a_4\pi,a_3\pi,e^{+}}}^{1,2,3} & {\tB_{a_3\pi,a_3\pi,e^{+}}}^{1,2} \ar[l]  & {\tB_{a_2\pi,a_3\pi,e^{+}}}^{1,3} \ar@{-}[l] \ar@{-}[r] & {\tB_{a_1\pi,a_3\pi,e^{+}}}^{2,3}\\
{\tB_{a_4\pi,a_2\pi,e^{+}}}^{1,3} \ar@{-}[d] \ar@{-}[r] \ar[u] & {\tB_{a_3\pi,a_2\pi,e^{+}}}^{1,2} & {\tB_{a_2\pi,a_2\pi,e^{+}}}^{1} \ar[l] \ar[d] \ar[u] & {\tB_{a_1\pi,a_2\pi,e^{+}}}^{2} \ar@{-}[l] \ar[u] \ar@{-}[d]\\
{\tB_{a_4\pi,a_1\pi,e^{+}}}^{2,3} & {\tB_{a_3\pi,a_1\pi,e^{+}}}^{2} \ar[l] \ar[u] \ar@{-}[r] & {\tB_{a_2\pi,a_1\pi,e^{+}}}^{1,3} & {\tB_{a_1\pi,a_1\pi,e^{+}}}^{3} \ar[l]\\
}$$

\begin{pspicture}(10,6.7){\tiny
\newdimen\hcent
\hcent=5cm
\newdimen\ycor
\ycor=5.8cm
\advance\ycor by 0pt
\newdimen\hspacedim
\hspacedim=160pt
\newdimen\xcor
\xcor=\hcent
\newdimen\temp
\temp=140pt
\multiply \temp by 0 \divide \temp by 2
\advance\xcor by -\temp
\hoogte=10pt
\breedte=10pt
\dikte=0.2pt
\rput(\xcor,\ycor){\rnode{v1h1}{
\begin{Young}
1\cr2\cr3\cr4\cr
\end{Young}
}
}
\advance\xcor by \hspacedim
\advance\ycor by -70pt
\newdimen\hspacedim
\hspacedim=180pt
\newdimen\xcor
\xcor=\hcent
\newdimen\temp
\temp=180pt
\multiply \temp by 1 \divide \temp by 2
\advance\xcor by -\temp
\rput(\xcor,\ycor){\rnode{v2h1}{
\begin{Young}
-3\cr2\cr3\cr4\cr
\end{Young}
}}
\advance\xcor by \hspacedim
\rput(\xcor,\ycor){\rnode{v2h2}{
\begin{Young}
-3&2\cr3\cr4\cr
\end{Young}
}}
\advance\xcor by \hspacedim
\advance\ycor by -70pt
\newdimen\hspacedim
\hspacedim=60pt
\newdimen\xcor
\xcor=\hcent
\newdimen\temp
\temp=60pt
\multiply \temp by 6 \divide \temp by 2
\advance\xcor by -\temp

\newdimen\ytmp
\ytmp=\ycor
\advance \ytmp by -27pt
\rput(\xcor,\ycor){\rnode{v3h1}{
\begin{Young}
\ng3\cr2\cr3\cr4\cr
\end{Young}
}}
\rput(\xcor,\ytmp){non-red}

\advance\xcor by \hspacedim
\rput(\xcor,\ycor){\rnode{v3h2}{
\begin{Young}
\ng3&2\cr3\cr4\cr
\end{Young}
}}
\rput(\xcor,\ytmp){non-red}

\advance\xcor by \hspacedim
\rput(\xcor,\ycor){\rnode{v3h3}{
\begin{Young}
-2\cr1\cr3\cr4\cr
\end{Young}
}}
\rput(\xcor,\ytmp){sym}

\advance\xcor by \hspacedim
\rput(\xcor,\ycor){\rnode{v3h4}{\begin{Young}
-2&1\cr3\cr4\cr
\end{Young}}}
\rput(\xcor,\ytmp){wedge}

\advance\xcor by \hspacedim
\rput(\xcor,\ycor){\rnode{v3h5}{\begin{Young}
-2&3\cr1\cr4\cr
\end{Young}}}
\rput(\xcor,\ytmp){sym}

\advance\xcor by \hspacedim
\rput(\xcor,\ycor){\rnode{v3h6}{
\begin{Young}
-2&3\cr1&4\cr
\end{Young}
}}
\rput(\xcor,\ytmp){sym}

\advance\xcor by \hspacedim
\rput(\xcor,\ycor){\rnode{v3h7}{
\begin{Young}
-2&1&3\cr4\cr
\end{Young}
}}
\rput(\xcor,\ytmp){wedge}

\ncline{-}{v1h1}{v2h1}
\ncline{-}{v1h1}{v2h2}
\ncline{-}{v2h1}{v3h1}
\ncline{-}{v2h1}{v3h2}
\ncline{-}{v2h2}{v3h3}
\ncline{-}{v2h2}{v3h4}
\ncline{-}{v2h2}{v3h5}
\ncline{-}{v2h2}{v3h6}
\ncline{-}{v2h2}{v3h7}
}
\end{pspicture}
\caption{The $W$-graph on $(\pH\tsr_\H(\pH\tsr_\H e^+)_1)_1$ and the graph $G$ of \textsection \ref{ss locallabels}, with the labeling conventions of \textsection\ref{ss affinecells}.}
\label{f VVe+ affine}
\end{figure}

Let $\Gamma$ be a $W$-graph, and put $E = A \Gamma$, $F = \Res_J A \Gamma$, $\tE^2:= \H\tsr_{J}\H\tsr_{J} A\Gamma$.

We will show that $\tE^2$  decomposes into what we call a reduced and non-reduced part.  Towards this end, consider the exact sequence

\be \label{e mackey}\xymatrix@R=.2cm{0 \ar[r]& F \ar[r]_<<<<{{\unit}} & \Res_J \H \tsr_J F \ar[r]_<<<<{\tau} & \H_{J}\tsr_{J'_{n-2}}\Res_{J'_{n-2}} F  \ar[r]& 0. \\
& \gamma \ar@{|->}[r] & 1 \bt \gamma \ar@{|->}[r]& 0 &\\
&& T_{a_k}\bt\gamma \ar@{|->}[r]& T_{s_{k-1} \ldots s_{2}} \bt \gamma &}\ee

By Proposition \ref{p res ind} the image of $\unit$ is a cellular submodule.  The map $\tau$ induces an isomorphism of $W_J$-graphs $\Res_J \H \tsr_J F / \im(\unit) \cong  \H_{J}\tsr_{J'_{n-2}}\Res_{J'_{n-2}} F $; given that the  sequence is exact, this is equivalent to taking canonical basis elements to canonical basis elements or to 0. That $\tau$ is an isomorphism can be seen directly by observing that it takes standard basis elements of $\H\tsr_JF$ to standard basis elements of $\H_J \tsr_{J'_{n-2}} F$ or to 0, takes the lattice $A^- \H \tsr_J \Gamma$ to the lattice $A^- \H_J \tsr_{J'_{n-2}} \Gamma$, and intertwines the involutions $\br{\cdot}$.

This decomposition also comes another application of the $W$-graph version of Mackey's formula (Theorem \ref{t mackey}).  For this application, put $K = I = J (= \{s_2,\ldots,s_{n-1}\})$.  Then $\leftexp{K}{W}^I = \{1,s_{1}\}$.  The $1$-subgraph of $\Res_J \H \tsr_J F$ is $\{\tB_{1,\gamma} : \gamma\in\Gamma\}$ and the $s_{1}$-subgraph is $\{\tB_{w s_{1},\gamma} :\ w\in W_J^{J'_{n-2}}, \gamma\in\Gamma\}$.  These are isomorphic as $W_J$-graphs to $\H_J \tsr_J F = F$ and $\H_{J}\tsr_{J'_{n-2}}\Res_{J'_{n-2}} F$ respectively (since we have $d^{-1} Ld = L$ for all $d$, $d F$ and $F$ are identical).

Next, tensor (\ref{e mackey}) with $\H$ to obtain
 \be \label{e mackey2}\xymatrix{0 \ar[r]& \H \tsr_J A \Gamma \ar[r]_<<<<{\H \tsr_J{\unit}} & \H \tsr_J \H \tsr_J A\Gamma \ar[r]_<<<<{\H \tsr_J \tau} & \H\tsr_{J'_{n-2}} A \Gamma  \ar[r]& 0.}\ee
Put $\tF^2 = \H\tsr_{J'_{n-2}} A \Gamma$.    The quotient $\tF^2$ (resp. the submodule $\H \tsr_J A \Gamma$) is the \emph{reduced} (resp. \emph{non-reduced}) part of $\tE^2$.
\begin{proposition}
\label{p red notred}
The submodule and quotient of $\tE^2$ given by (\ref{e mackey2}) are cellular and the maps in (\ref{e mackey2}) take canonical basis elements to canonical basis elements or to 0.
\end{proposition}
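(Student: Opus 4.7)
The plan is to deduce the proposition from Proposition \ref{CBSubquotientprop} applied to the sequence (\ref{e mackey}). The text preceding the proposition already establishes that (\ref{e mackey}) exhibits $\im(\unit)$ as a cellular $W_J$-graph submodule of $\Res_J \H \tsr_J F$ (by Proposition \ref{p res ind}), and that $\tau$ is an isomorphism of $W_J$-graphs from the quotient $\Res_J \H \tsr_J F / \im(\unit)$ onto $\H_J \tsr_{J'_{n-2}} F$. Thus $\Res_J \H \tsr_J F$ decomposes, at the level of $W_J$-graphs, into the cellular submodule $\im(\unit) \cong F$ and the cellular quotient $\H_J \tsr_{J'_{n-2}} F$.

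I would now apply Proposition \ref{CBSubquotientprop} to this cellular decomposition of $\Res_J \H \tsr_J F$, viewed as the $\H_J$-module arising from the $W_J$-graph obtained by restricting the $W$-graph $\tilde{\Lambda}$ on $\H \tsr_J F$. Part (i) of that proposition says $\H \tsr_J \im(\unit)$ embeds as a cellular submodule of $\H \tsr_J \Res_J \H \tsr_J F$ whose canonical basis is a subset of that of the ambient module; part (ii) gives the corresponding statement for the cellular quotient. Because (\ref{e mackey2}) is exactly obtained by tensoring (\ref{e mackey}) with $\H$ over $\H_J$, these statements translate directly into the desired conclusions: the inclusion sends canonical basis elements to canonical basis elements, and the projection sends canonical basis elements to canonical basis elements or to $0$.

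The remaining subtlety, and the main technical point, is to identify the induced quotient, which Proposition \ref{CBSubquotientprop} produces as $\H \tsr_J (\H_J \tsr_{J'_{n-2}} F)$, with $\tF^2 = \H \tsr_{J'_{n-2}} A\Gamma$. The identification as $\H$-modules is automatic by associativity of the tensor product. The identification as $W$-graphs is exactly what Proposition \ref{p nested parabolic} furnishes for the nested parabolic chain $J'_{n-2} \subseteq J \subseteq S$: the IC basis of $\H \tsr_{J'_{n-2}} A\Gamma$ constructed directly from the standard basis $\{T_w \bt \gamma : w \in W^{J'_{n-2}}, \gamma \in \Gamma\}$ coincides with the one built by first inducing $F$ from $\H_{J'_{n-2}}$ to $\H_J$ (producing the canonical basis on $\H_J \tsr_{J'_{n-2}} F$) and then inducing up to $\H$ using that basis as the standard basis. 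Once this identification is in place, combining it with the previous paragraph yields the proposition.
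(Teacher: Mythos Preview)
Your argument is correct and follows essentially the same route as the paper: apply Proposition~\ref{CBSubquotientprop} to the cellular decomposition of $\Res_J \H \tsr_J F$ established in the preceding text (the paper cites this decomposition via Theorem~\ref{t mackey}, while you cite it via Proposition~\ref{p res ind} and the direct description of $\tau$; both are laid out right before the proposition). Your explicit invocation of Proposition~\ref{p nested parabolic} to identify $\H \tsr_J (\H_J \tsr_{J'_{n-2}} F)$ with $\tF^2$ as $W$-graphs is a step the paper leaves implicit but is indeed needed.
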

\begin{proof}
This follows from the application of Theorem \ref{t mackey} described above and Proposition \ref{CBSubquotientprop}.
\end{proof}

\begin{example}
The non-reduced part of $\tE^2$ for $E = e^+$ is the bottom row of the $W$-graph in Figure \ref{f VVe+}. The cells comprising it are labeled ``non-red'' below the tree.
\end{example}

\subsection{}
\label{ss red notred u=1}
Let us determine what the decomposition of $\tE^2$ into reduced and non-reduced parts becomes at $\u = 1$.

\begin{proposition}
\label{p mackey2 u=1}
At $\u=1$, (\ref{e mackey2}) becomes
\be \label{e mackey2 u=1}
\xymatrix@R=.2cm{0 \ar[r]& V \tsr E \ar[r] & V \tsr V \tsr E \ar[r] & T^2_\text{red} V \tsr E \ar[r]& 0. \\
& x_k \tsr \gamma \ar@{|->}[r] & x_k \tsr x_k \tsr \gamma \ar@{|->}[r]& 0 &\\
&& x_i \tsr x_j \tsr \gamma \ar@{|->}[r]& x_i \tsr x_j \tsr \gamma, &}
\ee
where $i \neq j$ and $T^2_\text{red} V := \ZZ \{ x_i \tsr x_j : i \neq j, i,j \in [n]\} \subseteq V \tsr V$.
\end{proposition}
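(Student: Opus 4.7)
The strategy is to specialize (\ref{e mackey2}) at $\u = 1$ and identify its three terms and two maps with those of (\ref{e mackey2 u=1}). Since $\H|_{\u=1} = \ZZ \S_n$ and $\H_J|_{\u=1} = \ZZ \S_{n-1}$, Corollary \ref{c indresiso} gives a natural $\ZZ\S_n$-module isomorphism $(\H \tsr_J A\Gamma)|_{\u=1} \cong V \tsr E$ under which $T_{a_k} \bt v$ specializes to $x_k \tsr a_k v$. Iterating this identification gives $(\H \tsr_J \H \tsr_J A\Gamma)|_{\u=1} \cong V \tsr V \tsr E$, with $T_{a_k} \bt 1 \bt v$ corresponding to $x_k \tsr x_k \tsr a_k v$.

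The first map $\H \tsr_J \unit$ of (\ref{e mackey2}) sends $T_{a_k} \bt v \mapsto T_{a_k} \bt 1 \bt v$, which under the above isomorphisms at $\u=1$ reads $x_k \tsr a_k v \mapsto x_k \tsr x_k \tsr a_k v$; substituting $\gamma = a_k v$ gives the first map of (\ref{e mackey2 u=1}). For the third module and the second map, rather than writing down an explicit isomorphism $\ZZ\S_n \tsr_{\ZZ\S_{n-2}} E \cong T^2_\text{red} V \tsr E$, I identify the third term with the cokernel of the first map. The sequence (\ref{e mackey2}) is exact by Proposition \ref{p red notred}, and its terms are free $A$-modules (being spanned by canonical bases), so exactness is preserved upon specialization at $\u = 1$; hence the third term at $\u=1$ is canonically the cokernel of the inclusion $V \tsr E \hookrightarrow V \tsr V \tsr E$ just computed. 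The image of that inclusion is the $\ZZ\S_n$-submodule $N := \ZZ\{x_k \tsr x_k \tsr \gamma : k \in [n], \gamma \in E\}$, and the projection $V \tsr V \tsr E \to (V \tsr V \tsr E)/N$ is visibly an isomorphism onto $T^2_\text{red} V \tsr E$ via $x_i \tsr x_j \tsr \gamma \mapsto x_i \tsr x_j \tsr \gamma$ for $i \neq j$ and $x_k \tsr x_k \tsr \gamma \mapsto 0$, matching the second map of (\ref{e mackey2 u=1}).

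The only real pitfall is bookkeeping: a naive application of Proposition \ref{p gk} identifying $\ZZ\S_n \tsr_{\ZZ\S_{n-2}} E \cong T^2_\text{red} V \tsr E$ via $g \bt e \mapsto x_{g(1)} \tsr x_{g(2)} \tsr ge$ would introduce an extra transposition $(ij)$ acting on $\gamma$ in the formula for the second map, because the factor $s_1 = (1,2)$ implicit in the factorization $a_k = s_{k-1} \cdots s_2 \cdot s_1$ does not commute with the action of $\S_{n-2}$ on $E$. Taking the third term to be the cokernel of the first map, as above, sidesteps this and produces the clean form claimed.
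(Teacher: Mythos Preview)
Your argument is correct. You identify the left and middle terms of (\ref{e mackey2}) at $\u=1$ via Corollary~\ref{c indresiso}, compute the first map explicitly, and then recover the right-hand term as the cokernel; since the short exact sequence consists of free $A$-modules it stays exact at $\u=1$, and the quotient $(V\tsr V\tsr E)/N$ is canonically $T^2_{\text{red}}V\tsr E$ with the displayed formulas.

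The paper takes a different route for the right-hand term. Rather than passing through the cokernel, it writes down the isomorphism $\tF^2|_{\u=1}\cong T^2_{\text{red}}V\tsr E$ directly, using the twisted form of Proposition~\ref{p gk} (the map $g\bt e\mapsto (g\bt 1)\tsr gce$ with $c=s_1$), and then checks that the square
\[
\xymatrix{
\tE^2|_{\u=1}\ar[r]\ar[d]^{\cong} & \tF^2|_{\u=1}\ar[d]^{\cong}\\
V\tsr V\tsr E\ar[r] & T^2_{\text{red}}V\tsr E
}
\]
commutes on coset representatives. You correctly flag this as the delicate point and bypass it.

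The trade-off: your cokernel argument is cleaner and self-contained for this proposition, but the paper's explicit formula $a_{k,l}\bt\gamma\mapsto x_k\tsr a_k(x_l)\tsr a_{k,l}s_1(\gamma)$ is reused downstream (e.g.\ in the proof that $\H\tsr_{S\backslash s_2}E$ is a $\u$-analogue of $S^2_{\text{red}}V\tsr E$, and in \S\ref{ss u=1 tS2}), where one needs to know how elements of $\tF^2|_{\u=1}$ look in the $T^2_{\text{red}}V\tsr E$ picture in terms of the coset basis, not just abstractly via the cokernel. So your proof establishes the proposition but does not furnish the explicit coordinates that the later arguments rely on.
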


To see this, first define $a_{k,l} = s_{k-1} \ldots s_{1}s_{l-1}\ldots s_{2}$ for $k\in [n],\ l \in \{2,\ldots,n\}$; then
\be \label{e aklcoset}
\begin{array}{rcl}
a_{k,l} \cdot s_1 &=& a_{l,k+1} \text{ if } k < l, \\
W^{J'_{n-2}} &=& \{a_{k,l} : k \in [n],\ l \in \{2, \ldots,n\}\}, \\
W^{S\backslash {s_{2}}} s_{1} &=& \{a_{k,l} : k \geq l>1\}, \text{ and}\\
 W^{S \backslash {s_{2}}} &=& \{a_{k,l} : k < l\}.
\end{array}
\ee

Apply Corollary \ref{c indresiso} twice to obtain
\be
\label{e vve at u=1}
\begin{array}{rcccl}
\tE^2|_{\u=1} & \cong & \ZZ\S_n\tsr_{\ZZ\S_{n-1}}V \tsr E & \cong & V \tsr V\tsr E\\
a_k \bt a_l \bt \gamma & \leftrightarrow & a_k \bt (x_l\tsr a_l(\gamma)) & \leftrightarrow & \left\{
\begin{array}{ll}
x_k \tsr x_k \tsr a_k a_l(\gamma) & \text{ if } l = 1,\\
x_k \tsr x_l \tsr a_k a_l(\gamma) & \text{ if } k < l, \\
x_k \tsr x_{l-1} \tsr a_k a_l(\gamma) & \text { if } k \geq l > 1. \end{array}\right.
\end{array} \ee

\begin{proof}[Proof of Proposition \ref{p mackey2 u=1}]
The interesting part of the calculation is the following diagram
\be
\label{e red notred diagram}
\xymatrix{
\tE^2|_{\u=1} \ar@{<->}[d]_<<<<{\cong}\ar@{->>}[r]& \tF^2|_{\u=1} \ar@{<->}[d]_<<<<{\cong}  & a_k \bt a_l \bt \gamma \ar@{|->}[r]\ar@{<->}[d] &  \ar@{<->}[d] a_{k,l} \bt \gamma \\
V \tsr V \tsr E \ar@{->>}[r]& T^2_\text{red} V \tsr E & x_k \tsr a_k (x_l) \tsr a_k a_l(\gamma) \ar@{|->}[r] & x_k \tsr a_k(x_l) \tsr a_{k,l} s_1 (\gamma)
}
\ee
where $k \in [n], l \in \{2,\ldots,n\}$.

There is a slightly tricky point here: the left-hand isomorphism of (\ref{e red notred diagram}) comes from (\ref{e vve at u=1}), but the right-hand isomorphism does not come from a similar application of Proposition \ref{p gk}. However, Proposition \ref{p gk} also holds with the isomorphism $g \bt e \mapsto (g \bt 1) \tsr g ce$ replacing (\ref{e gk}), where $c \in G$ commutes with all of $K$. In this case we must choose $c = s_1$ (which commutes with $K = J'_{n-2}$) to make the diagram (\ref{e red notred diagram}) commute.
\end{proof}

\subsection{}
There is a similar decomposition of $\hE^2 := (\pH\tsr_\H(\pH\tsr_\H A\Gamma)_1)_1$ into a reduced and non-reduced part. Two applications of Proposition \ref{p affine equals resind} yield $\hE^2 = \H \tsr_J \pi(\H \tsr_J \pi E)$.

First, let us apply Theorem \ref{t mackey} to $\Res_{J_{n-1}} \H \tsr_{J'_{n-1}} \pi E $ analogously to the application in the previous subsection.    In this case $I = J'_{n-1}, \ K = J_{n-1}$, and therefore $\leftexp{K}{W}^I = \{1, a_n\}$.

 The $1$-subgraph is $\{\tB_{a_k, \pi \gamma} : k < n, \pi \gamma \in \pi \Gamma\}$ and spans a cellular submodule of $\Res_{J_{n-1}} \H \tsr_{J'_{n-1}} \pi E $.  This can be seen, for instance, by applying Proposition \ref{p ic lower ideal} with the order $\ord$ of \textsection \ref{ss IC basis} to obtain
 \be A \{\tB_{a_k ,\pi \gamma}: k < n,  \pi \gamma \in \pi \Gamma\} = A \{\T_{a_k ,\pi \gamma}: k < n, \pi \gamma \in \pi \Gamma\}; \ee
 it is clear that this $A$-span of $\T$'s is left stable under the action of $\H_{J_{n-1}}$. Now this submodule is isomorphic to $\H_{J_{n-1}} \tsr_{J_{n-1}\backslash s_1} \pi E $ (as a $W_{J_{n-1}}$-graph) by Theorem \ref{t mackey}.

 The $a_n$-subgraph is $\{\tB_{a_n, \pi \gamma} : \pi \gamma \in \pi \Gamma\}$ and spans a cellular quotient since the only other $d$-subgraph spans a submodule.  This quotient is isomorphic to $a_n \pi E$ as a $W_{J_{n-1}}$-graph.  Moreover, $a_n \pi E$ is exactly $\Res_{J_{n-1}} E$ as $L = K \cap a_n I a_n^{-1} = K =
 {J_{n-1}}$.  The following exact sequence summarizes what we have so far.
 \be \label{e mackey affine}\xymatrix@R=.2cm{
0 \ar[r]& \H_{J_{n-1}} \tsr_{J_{n-1}\backslash s_1} \pi E \ar[r] & \Res_{J_{n-1}} \H \tsr_{J'_{n-1}} \pi E \ar[r] & \Res_{J_{n-1}} E \ar[r]& 0.
}\ee

Applying $\pi$ to the $W_{J_{n-1}}$-graphs  in this sequence to obtain $W_{J'_{n-1}}$-graphs (as explained before Theorem \ref{t mackey}) and then tensoring with $\H$ yields
 \be \label{e mackey2 affine}\xymatrix@C=.24cm@R=.47cm{0 \ar[r]& \H \tsr_{J'_{n-1}} \pi (\H_{J_{n-1}} \tsr_{J_{n-1}\backslash s_1} \pi E) \ar[r] \ar@{<->}[d]^{\cong} & \H \tsr_{J'_{n-1}} \pi (\H \tsr_{J'_{n-1}} \pi E) \ar[r]\ar@{<->}[d]^{\cong} & \H \tsr_{J'_{n-1}} \pi E \ar[r]\ar@{<->}[d]^{\cong}& 0\\
 0 \ar[r] & \H \tsr_{J'_{n-2}} \Res_{J'_{n-2}}\pi^2 E \ar[r] & (\pH\tsr_\H(\pH\tsr_\H E)_1)_1 \ar[r] & (\pH \tsr_\H E)_1 \ar[r]& 0, }\ee
where $\Res_{J'_{n-2}}\pi^2 E$ is the $W_{J'_{n-2}}$-graph obtained from $\Res_{J_{n-2}} E$ by increasing descent set indices by 2.  The leftmost isomorphism  comes from the isomorphism of Coxeter group pairs $(W_{J_{n-1}\backslash s_1},W_{J_{n-1}}) \cong (W_{J'_{n-2}},W_{J'_{n-1}})$ given by conjugation by $\pi$. The other two isomorphisms are applications of Proposition \ref{p affine equals resind} .

The submodule $\hF^2 :=\H \tsr_{J'_{n-2}} \Res_{J'_{n-2}}\pi^2 E$ (resp. the quotient $(\pH \tsr_\H E)_1$) is the \emph{reduced} (resp. \emph{non-reduced}) part of $\hE^2$.   We have proved the following analogue of Proposition \ref{p red notred}.
\begin{proposition}\label{p red notred affine}
The submodule and quotient of $\hE^2$ given by (\ref{e mackey2 affine}) are cellular
and the maps in (\ref{e mackey2 affine}) take canonical basis elements to canonical basis elements or to 0.
\end{proposition}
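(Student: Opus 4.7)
The plan is to mimic exactly the strategy used for Proposition \ref{p red notred}, reducing to the Mackey-type decomposition (Theorem \ref{t mackey}) combined with Proposition \ref{CBSubquotientprop}, while carefully tracking the identifications afforded by Proposition \ref{p affine equals resind}.

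First I would verify that the short exact sequence (\ref{e mackey affine}) is a sequence of cellular $W_{J_{n-1}}$-graphs in which every map sends canonical basis elements to canonical basis elements or to $0$. This is exactly the content of the paragraphs immediately preceding the proposition: Theorem \ref{t mackey} gives the decomposition of $\Res_{J_{n-1}}\H\tsr_{J'_{n-1}}\pi E$ into its two $d$-subgraphs corresponding to $d\in\{1,a_n\}$; the $1$-subgraph is visibly a cellular submodule (using Proposition \ref{p ic lower ideal} together with the fact that $A\{\tB_{a_k,\pi\gamma}:k<n\}=A\{\T_{a_k,\pi\gamma}:k<n\}$ is $\H_{J_{n-1}}$-stable), and the $a_n$-subgraph is then automatically a cellular quotient. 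The identifications of these subgraphs as $\H_{J_{n-1}}\tsr_{J_{n-1}\backslash s_1}\pi E$ and $\Res_{J_{n-1}}E$ are likewise given by Theorem \ref{t mackey}.

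Next I would apply $\pi$-conjugation to convert the sequence (\ref{e mackey affine}) of $W_{J_{n-1}}$-graphs into a sequence of $W_{J'_{n-1}}$-graphs; since the $\pi$-conjugation operation permutes the descent-set indices by $s_i\mapsto \pi s_i\pi^{-1}$ without altering edge weights or the underlying canonical basis, cellularity and the canonical-basis-preserving property are preserved. Then I would tensor the result with $\H$ over $\H_{J'_{n-1}}$. By Proposition \ref{CBSubquotientprop}, inducing a cellular submodule (resp.\ quotient) yields a cellular submodule (resp.\ quotient), and the induced inclusion and projection send canonical basis elements to canonical basis elements or to $0$. This produces the top row of diagram (\ref{e mackey2 affine}), which is therefore an exact sequence of cellular $W$-graphs with the required canonical-basis-preserving property.

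Finally I would invoke Proposition \ref{p affine equals resind} to replace each term of the top row by its counterpart in the bottom row of (\ref{e mackey2 affine}); this proposition identifies the $W$-graphs on both sides, so canonical bases correspond. The only mildly delicate point, and the one I expect to require the most care, is the leftmost isomorphism: it rests on the Coxeter pair isomorphism $(W_{J_{n-1}\backslash s_1},W_{J_{n-1}})\cong(W_{J'_{n-2}},W_{J'_{n-1}})$ induced by conjugation by $\pi$, together with a second application of Proposition \ref{p affine equals resind} for $n-1$ in place of $n$. Once this identification is checked to be an isomorphism of $W$-graphs (which it is, since both sides are $\H\tsr_{J'_{n-2}}$ applied to the same restricted $W_{J'_{n-2}}$-graph up to a descent-set shift by $\pi$-conjugation), the conclusion follows.
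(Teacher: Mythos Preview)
Your proposal is correct and follows essentially the same route as the paper: the paper's proof is simply the discussion preceding the proposition (Mackey decomposition via Theorem~\ref{t mackey}, the $1$-subgraph shown to be a cellular submodule via Proposition~\ref{p ic lower ideal}, $\pi$-conjugation, then inducing and invoking Proposition~\ref{CBSubquotientprop}), and you have reproduced exactly these ingredients. One small quibble: the leftmost vertical isomorphism in (\ref{e mackey2 affine}) does not require a second application of Proposition~\ref{p affine equals resind}; it follows directly from the Coxeter-pair isomorphism $(W_{J_{n-1}\backslash s_1},W_{J_{n-1}})\cong(W_{J'_{n-2}},W_{J'_{n-1}})$ under $\pi$-conjugation together with transitivity of induction, as the paper indicates.
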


\begin{example}
The non-reduced part of $\hE^2$ for $E = e^+$ is the top row of the $W$-graph in Figure \ref{f VVe+ affine}. The cells comprising it are labeled ``non-red'' below the tree.
\end{example}

At $\u=1$, the decomposition (\ref{e mackey2 affine}) becomes
\be \hE^2|_{\u=1} \cong V \tsr V \tsr E \cong T^2_\text{red} V \tsr E \oplus V \tsr E\ee
(with the left-hand isomorphism from Proposition \ref{p affine u=1}),  but the computation is different from that of \textsection\ref{ss red notred u=1}. We omit the details.


\section{Decomposing $V \tsr V \tsr E$ and the functor $\tsymred$}
\label{s decomposing VVE}
In this section we study a $W$-graph version of the decomposition $V \tsr V \tsr E \cong S^2V\tsr E \oplus \Lambda^2V\tsr E$. Along the way, we come across a mysterious object, the sym-wedge functor $\tsymred$. At $\u = 1$, this is some kind of mixture of the functors $S^2_\text{red} V \tsr-$ and $\Lambda^2 V \tsr-$, where $S^2_\text{red} V = \ZZ \{ x_i \tsr x_j + x_j \tsr x_i : i \neq j \} \subseteq S^2 V \subseteq V \tsr V$.

\subsection{}

Let $\Lambda$ be the $W_{S \backslash s_2}$-graph on $\H_{S \backslash s_2} \tsr_{J'_{n-2}} A \Gamma$ obtained from Theorem \ref{t HY canbas exists}.
For any $W$-graph $\ce$ and $s\in S$, define $\ce^-_s = \{\gamma \in \ce: s \in L(\gamma)\}$ and $\ce^+_s = \{\gamma \in \ce: s \notin L(\gamma)\}$.  In this case, $\Lambda^-_{s_1} = \{\tB_{s_1,\gamma}:\gamma \in \Gamma\}$, and $\Lambda^+_{s_1} =  \{\tB_{1,\gamma}:\gamma \in \Gamma\}$ as $L(w, \gamma) = L(w) \cup L(\gamma)$.  Also note that $\tB_{1,\gamma} = \C_{1} \bt \gamma$ and $\tB_{s_1,\gamma} = \C_{s_1} \bt \gamma$.

It is clear that in the case $\Gamma = \Gamma_{W_{J'_{n-2}}}$, $A \Lambda^-_{s_1}$ is a cellular submodule of $A \Lambda$.  This is actually true in full generality as we will see shortly (Lemma \ref{l s1 descent submodule}).  Now define the \emph{sym-wedge} functor $\tsymred$ by $\tsymred A\Gamma = \H\tsr_{{S \backslash s_2}} A\Lambda^-_{s_1}$, with a $W$-graph structure coming from Theorem \ref{t HY canbas exists}.

\begin{theorem}\label{t twisted S2}
The $\H$-module $\tsymred A\Gamma$ is a cellular submodule of $\tF^2 := \H\tsr_{J'_{n-2}} A \Gamma$.
\end{theorem}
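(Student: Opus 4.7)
The plan is to reinterpret $\tF^2$ via the nested parabolic structure $J'_{n-2}\subseteq S\backslash s_2\subseteq S$ and then realize $\tsymred A\Gamma$ as a cellular submodule through a descent-set argument at the intermediate stage.

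First I would invoke Proposition \ref{p nested parabolic} with $J_2 = J'_{n-2}$ and $J_1 = S\backslash s_2$, applied to $A\Gamma$ regarded as an $\H_{J'_{n-2}}$-module by restriction. This identifies the canonical basis of $\tF^2 = \H\tsr_{J'_{n-2}} A\Gamma$ with that of $\H\tsr_{S\backslash s_2} A\Lambda$, so these two modules agree as $W$-graphs. The theorem then reduces to showing that $A\Lambda^-_{s_1}$ is a cellular submodule of $A\Lambda$ as a $W_{S\backslash s_2}$-graph, since Proposition \ref{CBSubquotientprop}(i) would then yield that $\tsymred A\Gamma = \H\tsr_{S\backslash s_2} A\Lambda^-_{s_1}$ is a cellular submodule of $\H\tsr_{S\backslash s_2} A\Lambda \cong \tF^2$.

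The main work is therefore in proving what I will call Lemma \ref{l s1 descent submodule}: $A\Lambda^-_{s_1}$ is a cellular submodule of $A\Lambda$. The key feature is that $s_1$ commutes with every element of $J'_{n-2}$, so $W^{J'_{n-2}}_{S\backslash s_2} = \{1, s_1\}$ and hence $\Lambda = \{\tB_{1,\gamma}, \tB_{s_1,\gamma} : \gamma \in \Gamma\}$. Using the descent set formula of Theorem \ref{t HY} one checks $L(\tB_{1,\gamma}) = L(\gamma)$ and $L(\tB_{s_1,\gamma}) = \{s_1\}\cup L(\gamma)$, so $\Lambda^-_{s_1} = \{\tB_{s_1,\gamma}\}$ and $\Lambda^+_{s_1} = \{\tB_{1,\gamma}\}$. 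Cellular stability reduces to checking $\C_t\tB_{s_1,\gamma} \in A\Lambda^-_{s_1}$ for each generator $t \in S\backslash s_2 = \{s_1\}\cup J'_{n-2}$. The cases $t = s_1$ and $t \in L(\gamma)$ are immediate, since then $t \in L(\tB_{s_1,\gamma})$ and $\C_t$ acts as $[2]$. In the remaining case $t \in J'_{n-2}\setminus L(\gamma)$, formula (\ref{Wgrapheq}) expresses $\C_t\tB_{s_1,\gamma}$ as a sum over vertices $\delta'$ with $t \in L(\delta')$; the only candidate contribution from $\Lambda^+_{s_1}$ is a term $\tB_{1,\gamma'}$, and the expansion $\tB_{s_1,\gamma} = \T_{s_1,\gamma} + \ui\,\T_{1,\gamma}$ combined with (\ref{e mudef}) shows $\mu(\tB_{1,\gamma'}, \tB_{s_1,\gamma})$ is $1$ if $\gamma' = \gamma$ and $0$ otherwise. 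For $\gamma' = \gamma$, however, the requirement $t \in L(\tB_{1,\gamma}) = L(\gamma)$ is exactly what we are assuming fails. Hence no term from $\Lambda^+_{s_1}$ contributes, and stability follows.

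The only genuine obstacle is this $\mu$-computation in the Lemma, and as sketched above it reduces to a short verification exploiting the commutation of $s_1$ with $J'_{n-2}$ and the explicit form of $\tB_{s_1,\gamma}$; the rest of the argument is formal from Propositions \ref{p nested parabolic} and \ref{CBSubquotientprop}(i).
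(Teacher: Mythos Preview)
Your argument is correct and follows the same overall architecture as the paper: identify $\tF^2$ with $\H\tsr_{S\backslash s_2} A\Lambda$ via Proposition~\ref{p nested parabolic}, establish that $A\Lambda^-_{s_1}$ is a cellular submodule of $A\Lambda$ (Lemma~\ref{l s1 descent submodule}), and then apply Proposition~\ref{CBSubquotientprop}.

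The difference lies in how the lemma is proved. You argue directly by computing the relevant $\mu$-values for the specific $W_{S\backslash s_2}$-graph $\Lambda = \H_{S\backslash s_2}\tsr_{J'_{n-2}} A\Gamma$, using the explicit expansion $\tB_{s_1,\gamma} = \T_{s_1,\gamma} + \ui\T_{1,\gamma}$. The paper instead proves a more general statement: for \emph{any} $W_{S\backslash s_2}$-graph $\Lambda$, the subspace $A\Lambda^-_{s_1}$ equals the kernel of left multiplication by $\C_{s_1}-[2]$, and since $s_1$ commutes with every element of $S\backslash s_2$ this kernel is automatically an $\H_{S\backslash s_2}$-submodule. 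Your approach is more concrete and perfectly adequate for the theorem at hand; the paper's abstract version has the advantage of applying uniformly to arbitrary $W_{S\backslash s_2}$-graphs, which is what is needed immediately afterward (Proposition~\ref{p cbsubquotientprop tsymred}) to show that $\tsymred$ is a functor and respects cellular subquotients.
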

\begin{proof}
By Lemma \ref{l s1 descent submodule} (below), $A \Lambda^-_{s_1}$ is a cellular submodule of $A \Lambda$. Proposition \ref{CBSubquotientprop} shows that $\H \tsr_{S \backslash s_2} A\Lambda^-_{s_1}$ is a cellular submodule of $\H \tsr_{S \backslash s_2} A\Lambda$, and $\H \tsr_{S \backslash s_2} \Lambda$ and $\H\tsr_{J'_{n-2}} \Gamma$  give the same
$W$-graph structure on $\tF^2$ by Proposition \ref{p nested parabolic}.
\end{proof}

The sym-wedge functor was discovered by looking at examples.   The preceding proof sort of explains why such a cellular submodule should exist, but it is still somewhat surprising it does not agree with $S^2_\text{red} V\tsr-$ at $\u = 1$.  We will determine what $\tsymred A\Gamma$ is at $\u=1$ in \textsection \ref{ss u=1 tS2} and address its relation with $S^2_\text{red} V \tsr-$ and $\Lambda^2 V \tsr-$ in \textsection \ref{ss S2 vs twisted S2}.  It will be useful for us later    to know the following additional structure possessed by $\tsymred$.

\begin{proposition} \label{p cbsubquotientprop tsymred}
The rule $E \mapsto \tsymred E$ is a functor $\tsymred : \H$-$\Mod \to \H$-$\Mod$. Moreover,  if $E = A\Gamma$ for some $W$-graph $\Gamma$, then taking cellular submodules or quotients of $E$ gives rise to cellular submodules and quotients of $\tsymred E$ in the same way induction does in Proposition \ref{CBSubquotientprop}.
\end{proposition}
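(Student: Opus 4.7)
The plan is to recast $\tsymred$ in a manifestly functorial form on all of $\H$-$\Mod$, check this agrees with the original $W$-graph definition when $E=A\Gamma$, and then extract the subquotient statement from two applications of Proposition \ref{CBSubquotientprop} sandwiched around Lemma \ref{l s1 descent submodule}. Concretely, for any $\H$-module $E$ I would set
$$\tsymred E := \H\tsr_{S\backslash s_2}\bigl(\C_{s_1}\cdot(\H_{S\backslash s_2}\tsr_{J'_{n-2}}E)\bigr).$$
The subspace $\C_{s_1}\cdot(\H_{S\backslash s_2}\tsr_{J'_{n-2}}E)$ is a left $\H_{S\backslash s_2}$-submodule because $\H_{S\backslash s_2}$ is generated as an algebra by $\C_{s_1}$ (satisfying $\C_{s_1}^2=[2]\C_{s_1}$) and $\H_{J'_{n-2}}$, and $\C_{s_1}$ commutes with $\H_{J'_{n-2}}$ since $s_1$ commutes with $s_3,\ldots,s_{n-1}$. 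When $E=A\Gamma$, the identity $\C_{s_1}\bt\gamma = \T_{s_1,\gamma}+\ui\,\T_{1,\gamma}$ together with the uniqueness clause of Theorem \ref{t HY canbas exists} gives $\tB_{s_1,\gamma}=\C_{s_1}\bt\gamma$, and combined with the stability calculation this shows $\C_{s_1}\cdot(\H_{S\backslash s_2}\tsr_{J'_{n-2}}A\Gamma)=A\Lambda^-_{s_1}$; hence the two definitions of $\tsymred A\Gamma$ coincide. Functoriality is then immediate: a morphism $f: E\to E'$ induces the $\H_{S\backslash s_2}$-linear map $\H_{S\backslash s_2}\tsr_{J'_{n-2}}f$, which sends the $\C_{s_1}$-submodule of the source into that of the target, after which we apply $\H\tsr_{S\backslash s_2}-$.

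For the second assertion, fix a cellular submodule $D\subseteq A\Gamma$ spanned by $\Gamma_D\subseteq\Gamma$, and denote by $\Lambda$ (resp.\ $\Lambda_D$) the $W_{S\backslash s_2}$-graph on $\H_{S\backslash s_2}\tsr_{J'_{n-2}}A\Gamma$ (resp.\ $\H_{S\backslash s_2}\tsr_{J'_{n-2}}D$). Proposition \ref{CBSubquotientprop}(i) identifies $\Lambda_D$ with a subset of $\Lambda$ and $\H_{S\backslash s_2}\tsr_{J'_{n-2}}D$ with a cellular submodule of $\H_{S\backslash s_2}\tsr_{J'_{n-2}}A\Gamma$. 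Since $(\Lambda_D)^-_{s_1}=\Lambda_D\cap\Lambda^-_{s_1}$, Lemma \ref{l s1 descent submodule} applied to both $\Lambda_D$ and $\Lambda$ shows that $A(\Lambda_D)^-_{s_1}$ is a cellular submodule of $A\Lambda^-_{s_1}$. A second application of Proposition \ref{CBSubquotientprop}(i), now to the functor $\H\tsr_{S\backslash s_2}-$, exhibits $\tsymred D$ as a cellular submodule of $\tsymred A\Gamma$ with the correct canonical basis. The cellular-quotient case is entirely parallel using part (ii) of Proposition \ref{CBSubquotientprop} in both steps; the only compatibility needed is that the projection $A\Gamma\to A\Gamma/D$ preserves descent sets, so taking the $s_1$-descent part commutes with passage to a cellular quotient. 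Cellular subquotients follow by combining the two.

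The only genuinely nontrivial input here is Lemma \ref{l s1 descent submodule}, which is used twice; beyond that, the argument is formal bookkeeping over constructions already developed in Sections \ref{s Hecke algebra} and \ref{s H1H2}, so I expect no serious obstacle.
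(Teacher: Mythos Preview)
Your proof is correct and follows essentially the same strategy as the paper: factor $\tsymred$ as a composite of manifestly functorial pieces, verify agreement with the original definition on $E=A\Gamma$, and then handle cellular subquotients by intersecting cellular submodules and applying Proposition~\ref{CBSubquotientprop} twice. The one difference is that you realize $A\Lambda^-_{s_1}$ as the \emph{image} of left-multiplication by $\C_{s_1}$, whereas the paper realizes it as the \emph{kernel} of $m_{\C_{s_1}-[2]}$ (this is precisely what Lemma~\ref{l s1 descent submodule} supplies); since $\C_{s_1}$ is central in $\H_{S\backslash s_2}$ and satisfies $\C_{s_1}(\C_{s_1}-[2])=0$, the two descriptions agree on the modules in question, and the remaining bookkeeping is identical.
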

\begin{proof}
As explained above, the proposed functor $\tsymred$ is the composition
\be \H\text{-}\Mod \xrightarrow{\Res_{J'_{n-2}}} \H_{J'_{n-2}}\text{-}\Mod \xrightarrow{\H_{S \backslash s_2}\tsr-} \H_{S \backslash s_2}\text{-}\Mod \xrightarrow{\zeta} \H_{S \backslash s_2}\text{-}\Mod \xrightarrow{\H\tsr-} \H\text{-}\Mod, \ee
where $\zeta(F)$ is the kernel of $F \xrightarrow{m_{\C_{s_1}-[2]}} F$ and $m_{h}$ is left multiplication by $h$ (by Lemma \ref{l s1 descent submodule}, $m_{\C_{s_1}-[2]}$ is an $\H_{S \backslash s_2}$-module homomorphism and its kernel equals $A\Lambda^-_{s_1}$ in the case $F = A\Lambda$). Thus it suffices to show that $\zeta$ is a functor and respects cellular subquotients as claimed.

Let $F$ and $F^*$ be $W_{S \backslash s_2}$-graphs and $f: F \to F^*$ be an $\H_{S \backslash s_2}$-module homomorphism. As $f\  m_{\C_{s_1}-[2]} = m_{\C_{s_1}-[2]}  f$, $f(\ker(m_{\C_{s_1}-[2]})) \subseteq \ker(m_{\C_{s_1}-[2]})$. Thus $f \mapsto f|_{\ker(m_{\C_{s_1}-[2]})}$ defines $\zeta$ on morphisms and this   certainly respects composition of morphisms.
%

For the second statement, just observe that if $\Lambda$ is a $W_{S \backslash s_2}$-graph  and $\ce \subseteq \Lambda$ spans a cellular submodule, then $\zeta(A\ce)$ is the intersection of the cellular submodules $A\ce$ and $A\Lambda^-_{s_1}$, which is a cellular submodule of $\zeta(A\Lambda) = A\Lambda^-_{s_1}$. Similarly, if $\ce^*$ is the vertex set $\Lambda \backslash \ce$, then $\zeta(A\ce^*) = A(\ce^* \cap \Lambda^-_{s_1})$, which is the cellular quotient $A\Lambda^-_{s_1} / \zeta(A\ce)$ of $A\Lambda^-_{s_1}$.
\end{proof}

\begin{lemma} \label{l s1 descent submodule}
For any $W$-graph $\Lambda$ and $s \in S$, the kernel of the map of abelian groups $m_{\C_s-[2]} :A\Lambda \to A\Lambda$ (where $m_h$ is left multiplication by $h$) is equal to $A\Lambda^-_s$. If $s$ commutes with $t$ for all $t \in S$ and $F$ is any $\H$-module, then $m_{\C_s-[2]}:F \to F$ is an $\H$-module  homomorphism.  Therefore, if $\Lambda$ is a $W_{S\backslash s_2}$-graph, then $A \Lambda^-_{s_1}$ is a cellular submodule of $A \Lambda$.
\end{lemma}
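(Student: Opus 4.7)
The plan is to handle the three assertions in turn. First, for the kernel identification, the containment $A\Lambda^-_s \subseteq \ker(m_{\C_s-[2]})$ is immediate from the first case of the $W$-graph defining relation (\ref{Wgrapheq}): on vertices with $s$ in the descent set $\C_s$ acts by $[2]$. For the reverse containment, I would decompose an arbitrary kernel element as $x = x^- + x^+$ with $x^\pm \in A\Lambda^\pm_s$; since $x^-$ already lies in the kernel, it suffices to force $x^+ = 0$ from $(\C_s-[2])x^+ = 0$. The key observation is that the second case of (\ref{Wgrapheq}) sends each $\gamma \in \Lambda^+_s$ to an $A$-combination of vertices in $\Lambda^-_s$, so when $x^+ = \sum_{\gamma \in \Lambda^+_s} a_\gamma \gamma$ is expanded the coefficient of any $\gamma \in \Lambda^+_s$ in $(\C_s - [2])x^+$ is simply $-[2] a_\gamma$. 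Since $[2] = \u + \ui$ is a non-zero-divisor in the domain $A = \ZZ[\u, \ui]$, each $a_\gamma$ vanishes, giving $x = x^- \in A\Lambda^-_s$.

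Second, for the homomorphism property, my plan is to show that $\C_s$ is central in $\H$. Under the hypothesis, for each $t \in S$ with $t \neq s$ the braid relation degenerates to $st = ts$, a single reduced factorization, so the Hecke relations in (\ref{e Hecke algebra def}) give $T_s T_t = T_{st} = T_{ts} = T_t T_s$; together with $t = s$ this shows $T_s$ commutes with every algebra generator of $\H$, hence is central, and therefore so is $\C_s = T_s + \ui$. Thus $m_{\C_s - [2]}$ commutes with $m_h$ for every $h \in \H$, i.e., it is an $\H$-module homomorphism.

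For the final conclusion, the subtlety is that $s_1$ does not commute with all of $S$, so the second assertion cannot be applied inside $\H$; instead I would apply it inside the Hecke algebra $\H_{S\backslash s_2}$ of the Coxeter subgroup $W_{S\backslash s_2}$. This algebra is generated by $T_{s_1}, T_{s_3}, \ldots, T_{s_{n-1}}$, and $s_1$ commutes with each $s_i$ for $i \geq 3$, so the second assertion guarantees that $m_{\C_{s_1}-[2]} : A\Lambda \to A\Lambda$ is an $\H_{S\backslash s_2}$-module homomorphism. Its kernel, which by the first assertion equals $A\Lambda^-_{s_1}$, is therefore an $\H_{S\backslash s_2}$-submodule; being spanned by the subset $\Lambda^-_{s_1}$ of the $W_{S\backslash s_2}$-graph $\Lambda$, it is a cellular submodule by definition. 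The only place I expect even a minor obstacle is the coefficient bookkeeping in the first step, where one must verify that $\Lambda^+_s$-basis contributions never cross into $\Lambda^-_s$ or vice versa aside from the diagonal $[2]$-scaling; this is immediate from the dichotomy in (\ref{Wgrapheq}).
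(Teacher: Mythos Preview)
Your argument is correct. The second and third assertions are handled exactly as in the paper: one checks that $T_s$, hence $\C_s$, is central in $\H$ (respectively in $\H_{S\backslash s_2}$) so that $m_{\C_s-[2]}$ is a module map, and then invokes the paper's definition of a cellular submodule as one spanned by a subset of the $W$-graph.

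For the first assertion, however, your route is genuinely different and in fact simpler. The paper argues by contradiction using a valuation-style lattice trick: assuming a nonzero $h$ supported on $\Lambda^+_s$ lies in the kernel, one scales so that all coefficients lie in $A^-$ with at least one not in $\ui A^-$, then computes $(\C_s-[2])h$ modulo $A^-\Lambda$ to obtain $-\u h \equiv 0$, forcing every coefficient into $\ui A^-$, a contradiction. Your approach instead splits $x = x^- + x^+$, observes from the second case of (\ref{Wgrapheq}) that $\C_s$ maps $A\Lambda^+_s$ into $A\Lambda^-_s$, and reads off directly that the $\Lambda^+_s$-component of $(\C_s-[2])x^+$ is $-[2]x^+$; since $[2]$ is a non-zero-divisor in $A$, this kills $x^+$. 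This avoids the normalization and the mod-lattice computation entirely, at the cost only of noting explicitly that $[2]$ is not a zero-divisor. The paper's lattice argument is a pattern that recurs elsewhere in the theory, but for this particular lemma your coefficient comparison is cleaner.
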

\begin{proof}
Certainly any $h \in A\Lambda^-_s$ is in the kernel of $m_{\C_s - [2]}$. To see that the kernel is no bigger, let $h = \sum_{\lambda \in \Lambda} c_\lambda \lambda$ ($c_\lambda \in A$) be an element of $A\Lambda$ satisfying $(\C_s-[2]) h = 0$. We may assume   that $c_\lambda = 0$ for $\lambda \in \Lambda^-_s$. Also, by multiplying the $c$'s by some power of $\u$, we may assume that $c_\lambda \in A^-$ for all $\lambda$ and $c_\lambda \notin \ui A^-\Lambda$ for at least one $\lambda$. Then computing mod $A^-\Lambda$, we have
\be 0 = \sum\limits_{\lambda \in \Lambda} c_\lambda \left(\sum\limits_{\{\delta : s \in L(\delta)\}} \mu(\delta, \lambda)\delta\right) - [2] \sum\limits_{\lambda \in \Lambda} c_\lambda \lambda \equiv -\u \sum\limits_{\lambda \in \Lambda} c_\lambda \lambda. \ee
Therefore $c_\lambda \in \ui A^- \Lambda$ for all $\lambda$, contradicting the earlier assumption.

The second statement is a special case of the fact that $m_h$ is an $\H$-module homomorphism whenever $h$ is in the center of $\H$.
\end{proof}

\subsection{}
\label{ss u=1 tS2}
To better understand the functor $\tsymred$, let us determine what it becomes at $\u = 1$.

\begin{proposition}
The image of $\tsymred E |_{\u=1}$ under the isomorphism $\tF^2 |_{\u=1} \cong T^2_\text{red} V\tsr E$ of (\ref{e red notred diagram}) is $S^2_\text{red} V \tsr E$ (resp. $\Lambda^2 V \tsr E$) if $\Res_{W_{\{s_1\}}} E$ is a sum of copies of the trivial (resp. sign) representation.
\end{proposition}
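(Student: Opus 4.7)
The strategy is to push canonical basis elements of $\tsymred E$ through the identifications until they land in $T^2_\text{red}V\tsr E$, then invoke the hypothesis on the $s_1$-action.

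First I realize $\tsymred E$ inside $\tF^2$. By Proposition \ref{p nested parabolic}, the obvious associativity map identifies $\H\tsr_{S\backslash s_2}\bigl(\H_{S\backslash s_2}\tsr_{J'_{n-2}}E\bigr)$ with $\tF^2$, and under this identification the canonical basis element $T_{a_{k,l}}\bt\tB_{s_1,\gamma}$ of $\tsymred E$ (for $k<l$ and $\gamma\in\Gamma$) becomes $T_{a_{k,l}}(T_{s_1}+\ui)\bt\gamma$. Specializing at $\u=1$ and invoking $a_{k,l}s_1=a_{l,k+1}$ from (\ref{e aklcoset}), this collapses to $a_{k,l}\bt\gamma + a_{l,k+1}\bt\gamma \in \ZZ\S_n\tsr_{\ZZ\S_{n-2}}E$.

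Next I apply the isomorphism $\tF^2|_{\u=1}\cong T^2_\text{red}V\tsr E$ from (\ref{e red notred diagram}). For $k<l$ one has $a_k(x_l)=x_l$ and $a_l(x_{k+1})=x_k$, so the image is $x_k\tsr x_l\tsr a_k a_l(\gamma) + x_l\tsr x_k\tsr a_l a_{k+1}(\gamma)$. The key identity needed is $a_l a_{k+1}=a_k a_l\cdot s_1$; this follows from a direct computation showing $a_{k',l'}s_1=a_{k'}a_{l'}$ for arbitrary $k',l'$ (a consequence of the definition of $a_{k',l'}$ together with $s_1^2=1$), applied at $(k',l')=(l,k+1)$ and combined with $a_{l,k+1}=a_{k,l}s_1=a_k a_l$. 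Hence the image rewrites as $(x_k\tsr x_l)\tsr a_k a_l(\gamma) + (x_l\tsr x_k)\tsr a_k a_l s_1(\gamma)$.

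With the hypothesis that $s_1$ acts on $E$ as $+1$ (resp.\ $-1$), the two summands combine into $(x_k\tsr x_l + x_l\tsr x_k)\tsr a_k a_l(\gamma)$ (resp.\ $(x_k\tsr x_l - x_l\tsr x_k)\tsr a_k a_l(\gamma)$), which lies in $S^2_\text{red}V\tsr E$ (resp.\ $\Lambda^2V\tsr E$). To conclude the image exhausts the full target, observe that as $(k,l)$ ranges over $k<l$ these symmetrized (resp.\ antisymmetrized) pairs give a $\ZZ$-basis of $S^2_\text{red}V$ (resp.\ $\Lambda^2V$), while for each fixed $(k,l)$ the map $\gamma\mapsto a_k a_l(\gamma)$ is a basis-permutation of $E$; a rank comparison ($\binom{n}{2}\cdot\text{rank}\,E$ on both sides) seals the argument. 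The only real obstacle is bookkeeping---keeping straight the multiple coset-representative parameterizations and the word/action conventions for $\S_n$---and the single substantive input is the identity $a_l a_{k+1}=a_k a_l s_1$, which is the precise point at which the $\pm$ distinction between trivial and sign representations of $s_1$ transmutes into the $\pm$ of symmetric versus alternating tensors.
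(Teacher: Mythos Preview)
Your argument is correct and follows essentially the same route as the paper's proof: push the standard basis elements $T_{a_{k,l}}\bt \C_{s_1}\bt\gamma$ (for $k<l$) through the identification of (\ref{e red notred diagram}), obtain $x_k\tsr x_l\tsr a_{k,l}s_1(\gamma)+x_l\tsr x_k\tsr a_{k,l}(\gamma)$, and then use the hypothesis on $s_1$. Your identity $a_la_{k+1}=a_ka_ls_1$ is equivalent to the paper's bookkeeping via $a_ka_l=a_{k,l}s_1$, and your rank count is the same as the paper's observation that $\ZZ\{a_{k,l}\gamma:\gamma\in\Gamma\}=E|_{\u=1}$. One terminological slip: $T_{a_{k,l}}\bt\tB_{s_1,\gamma}$ is a \emph{standard} basis element, not a canonical one, but since you only need it to span $\tsymred E$ this is harmless.
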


\begin{proof}
Under the isomorphism $\tF^2 |_{\u=1} \cong T^2_\text{red} V\tsr E$, the standard basis for $\tsymred E$ coming from realizing it as $\H \tsr_{S \backslash s_2} A \Lambda^-_{s_1}$ (see the discussion before Theorem \ref{t twisted S2}) satisfies
\be \label{e Z2 u=1}
(T_{a_{k,l}} \bt_{S \backslash s_2} \C_{s_1} \bt_{J'_{n-2}} \gamma)|_{\u=1} = (a_{k,l} + a_{l,k+1} )\bt \gamma \longleftrightarrow x_k \tsr x_l \tsr a_{k,l} s_1 \gamma + x_l \tsr x_k\tsr a_{k,l}  \gamma  \quad (k < l),
\ee
where (\ref{e aklcoset}) has been used freely.
Therefore if $s_{1}$ acts trivially on $E$, then the rightmost expression in (\ref{e Z2 u=1}) becomes $(x_k \tsr x_l + x_l \tsr x_k) \tsr a_{k,l} \gamma$.  If $s_{1}$ acts by -1 on $E$, then it becomes $(-x_k \tsr x_l + x_l \tsr x_k) \tsr a_{k,l} \gamma$.  The proposition then follows, as $\ZZ \{a_{k,l} \gamma: \gamma \in \Gamma\} = E|_{\u=1}$.
\end{proof}

\subsection{}
A correct $W$-graph version of tensoring $S_{\text{red}}^2 V$ with $E$ is $\H \tsr_{S\backslash s_2} E$, and the projection $T^2_\text{red} V \tsr E \twoheadrightarrow S^2_\text{red} V \tsr E$ corresponds to
\be \label{e reduced v tsr v to reduced s^2} \tF^2 = \H\tsr_{J'_{n-2}} E = \H\tsr_{S\backslash s_2} \H_{S\backslash s_2}\tsr_{J'_{n-2}} E \xrightarrow{\tilde{\counit}(E)} \H\tsr_{S\backslash s_2} E, \ee where $\tilde{\counit} (E) = \H\tsr_{S\backslash s_2}\counit (E)$. This is justified by the following calculation at $\u = 1$.

\begin{proposition}
The module $\H \tsr_{S \backslash s_2} E$ is a $\u$-analogue of $S^2_\text{red} V \tsr E$ (via the right vertical map of the following diagram, to be defined) in a way so that the diagram commutes.
\be
\xymatrix@C=1cm{
(\H \tsr_{J'_{n-2}} E) |_{\u = 1} \ar@{<->}[d]^\cong\ar@{->>}[r]_<<<<<{\tilde{\counit}(E)}& (\H \tsr_{S \backslash s_2} E)|_{\u = 1} \ar@{<->}[d]^\cong\\
T^2_\text{red} V \tsr E \ar@{->>}[r] & S^2_\text{red} V \tsr E
}
\ee
\end{proposition}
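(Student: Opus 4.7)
The plan is to define the right vertical isomorphism via a twisted form of Proposition \ref{p gk} and then verify commutativity on a basis.  Since $s_1$ commutes with every element of $W_{S\backslash s_2} = W_{\{s_1\}} \times W_{J'_{n-2}}$, the generalization of Proposition \ref{p gk} used in the proof of Proposition \ref{p mackey2 u=1} (taking $c = s_1$) gives the $\ZZ\S_n$-module isomorphism
$$ \psi \colon (\H \tsr_{S \backslash s_2} E)|_{\u = 1} \;=\; \ZZ\S_n \tsr_{\ZZ W_{S \backslash s_2}} E \xrightarrow{\cong} S^2_\text{red} V \tsr E, \quad g \bt e \mapsto g(x_1 \tsr x_2 + x_2 \tsr x_1) \tsr g s_1 e, $$
which is well-defined and an isomorphism because $x_1 \tsr x_2 + x_2 \tsr x_1$ generates $S^2_\text{red} V$ as a $\ZZ\S_n$-module with stabilizer exactly $W_{S \backslash s_2}$.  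The $s_1$-twist is not optional: it is forced in order to match the analogous twist already built into the left vertical map from (\ref{e red notred diagram}).

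Next I would verify commutativity directly on the $A$-basis $\{a_{k,l} \bt \gamma : k \in [n],\ l \in \{2,\ldots,n\}\}$ of $\tF^2|_{\u = 1}$.  The key combinatorial input is the minimal left coset decomposition of $a_{k,l}$ in $W^{S\backslash s_2} \cdot W_{S \backslash s_2}$: if $k < l$ then $a_{k,l} \in W^{S\backslash s_2}$ itself, so $\tilde{\counit}(a_{k,l} \bt \gamma) = a_{k,l} \bt \gamma$; if $k \geq l > 1$ then (\ref{e aklcoset}) provides the reduced factorization $a_{k,l} = a_{l-1,k} \cdot s_1$ with $a_{l-1,k} \in W^{S\backslash s_2}$, so $\tilde{\counit}(a_{k,l} \bt \gamma) = a_{l-1,k} \bt s_1 \gamma$.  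In both cases, chasing around the square and recalling that $a_k(l) = l$ (if $l > k$) or $a_k(l) = l-1$ (if $l \leq k$) yields the common value
$$(x_k \tsr x_{a_k(l)} + x_{a_k(l)} \tsr x_k) \tsr a_{k,l} s_1(\gamma) \;\in\; S^2_\text{red} V \tsr E;$$
in the second case, one uses the identity $a_{l-1,k} = a_{k,l} s_1$ of permutations to cancel the doubled $s_1$ produced on the right--then--down path.

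The only genuine subtlety is to notice that $s_1$ commutes with $W_{S\backslash s_2}$, which both permits the twisted form of Proposition \ref{p gk} and ensures that the $s_1$-twist already present in the left vertical map propagates correctly.  After that, the verification reduces to a routine case-by-case computation on basis elements.
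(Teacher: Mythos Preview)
Your proposal is correct and follows essentially the same approach as the paper: define the right vertical isomorphism via the twisted form of Proposition \ref{p gk} with $c = s_1$ (using that $s_1$ is central in $W_{S\backslash s_2}$), and then verify commutativity on the basis $\{a_{k,l}\bt\gamma\}$ by splitting into the cases $k<l$ and $k\geq l>1$ via (\ref{e aklcoset}). The only cosmetic difference is that the paper identifies $S^2_\text{red}V$ with monomials $x_kx_l$ in $\ZZ[x_1,\dots,x_n]_2$ rather than with symmetrized tensors, and it checks explicitly that $a_{k,l}\bt 1\mapsto x_kx_l$ is a $\ZZ\S_n$-module map (the interesting case being $s_k\cdot a_{k,k+1}$), whereas you phrase this as the observation that $W_{S\backslash s_2}$ is the stabilizer of $x_1\tsr x_2 + x_2\tsr x_1$.
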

\begin{proof}
Here we will think of $S^2_\text{red} V$ as the subspace $\ZZ \{x_k x_l : k \neq l\}$ of $(\ZZ[x_1,\ldots,x_n])_2$, and the map $T^2_\text{red} V \tsr E \to S^2_\text{red} V \tsr E$ as the one sending $x_k \tsr x_l$ to $x_k x_l$.   The right vertical map comes from an application of the modified Proposition \ref{p gk} (in which $g \bt e \mapsto (g \bt 1) \tsr g ce$ replaces (\ref{e gk}), where $c \in G$ commutes with all of $K$).  In this application, use $G = W, K = W_{S \backslash s_2}$, $c = s_1$. We have $\ZZ G \tsr_{\ZZ K} \ZZ \cong S^2_\text{red} V$ by $a_{k, l} \bt 1 \mapsto x_k x_l$ for $k < l$. It is straightforward to check that this is a $\ZZ G$-module homomorphism; the most interesting case is $s_k a_{k, k+1} \bt 1 = a_{k+1, k+1} \bt 1 = a_{k, k+1} s_1 \bt 1 = a_{k, k+1} \bt 1$, which matches $s_k (x_k x_{k+1}) = x_k x_{k+1}$.  It can be checked directly on the basis $\{a_{k,l} \bt \gamma: k \in [n], l \in \{2,\ldots,n\}, \gamma \in \Gamma\}$ of $(\H \tsr_{J'_{n-2}} E) |_{\u = 1}$  that the diagram commutes.
\end{proof}

\subsection{}
\label{ss diagram commutes}
It is immediate from Proposition \ref{p affine u=1} that the right-hand vertical map in the diagram below is a $\u$-analogue of the surjection $V \tsr V \tsr E \to S^2 V \tsr E$. Let us check that this is compatible with the projection $\tilde{\counit}(\pi^2 E)$ -- the $\u$-analogue of the projection $T^2_\text{red} V \tsr E \to S^2_\text{red} V \tsr E$. This amounts to checking that the following diagram commutes, where the top horizontal map is from (\ref{e mackey2 affine}) and the bottom horizontal map we take to be the inclusion of the $\pi^2$-subgraph of $\Res_\H \eH \tsr_\H E$.
\be
\xymatrix{
\H \tsr_{J'_{n-2}} \pi^2 E \ar[r]\ar@{->>}[d]_<<<<{\tilde{\counit}(\pi^2 E)} & (\pH \tsr_{\H} (\pH \tsr_{\H} E)_1)_1 \ar@{->>}[d]^<<<<{\counit((\pH \tsr_{\H} E)_1)} \\
\H \tsr_{S \backslash s_2} \pi^2 E \ar[r]& (\pH \tsr_{\H} E)_2
}
\ee
It is straightforward to check, given Theorem \ref{t mackey} and the derivation of (\ref{e mackey2 affine}), that standard basis elements behave as shown under the horizontal maps. This proves that the diagram commutes.
\be
\xymatrix{
\T_{a_{k,l}, \pi^2 \gamma} \ar@{|->}[r]\ar@{|->}[d] & \T_{a_k \pi, a_{l-1} \pi, \gamma}\ar@{|->}[d] & \T_{a_{k,l}, \pi^2 \gamma} \ar@{|->}[r]\ar@{|->}[d]& \ar@{|->}[d] \T_{a_k \pi, a_{l-1} \pi, \gamma} \\
\T_{a_{k,l}, \pi^2\gamma} \ar@{|->}[r] & \T_{a_{k,l} \pi^2, \gamma} & T_{a_{l-1,k}} \bt T_{s_1} (\pi^2\gamma) \ar@{|->}[r] & T_{a_{l-1,k}} \pi^2 \bt T_{s_{n-1}} \gamma}
\ee
The left-hand diagram is for $k < l$ and the right for $k \geq l > 1$.

This calculation will be used to show that the work we do in the next subsection for the $\H \tsr_J -$ version of tensoring with $V$ is also useful for the $(\pH \tsr_\H -)_1$ version.

\subsection{}
\label{ss S2 vs twisted S2}
In this subsection we will partially determine the projection $\tilde{\counit}(E)$  on canonical basis elements. Despite the fact that $\H \tsr_{S \backslash s_2} E$ is a $\u$-analogue of $S^2_\text{red}V \tsr E$ and $\tsymred E$ is not, our study of $\tsymred$ was not wasted.  It will be helpful for determining what $\tilde{\counit}(E)$ does to canonical basis elements.  This is not so easy to see directly, as it does not simply send canonical basis elements to canonical basis elements.

By Lemma \ref{l s1 descent submodule}, $A\Gamma^-_{s_1}$ is a cellular submodule of $\Res_{S \backslash s_2} A\Gamma$ with corresponding quotient $A\Gamma^+_{s_1}$, hence the exact sequence
\be 0 \to A\Gamma^-_{s_1} \to \Res_{S\backslash s_2} A\Gamma \to A\Gamma^+_{s_1} \to 0. \ee
Since $\tF^2, \tsymred A\Gamma, \symred A\Gamma$ only depend on $\Res_{S\backslash s_2} A\Gamma,$ this sequence yields the three columns in the diagram below.  The left column is exact by Proposition \ref{p cbsubquotientprop tsymred} and the other two are exact by exactness of induction. The left two squares commute because $\zeta$ (of the proof of Proposition \ref{p cbsubquotientprop tsymred}) of a morphism just restricts its domain, and the right two squares commute because $\counit$ is a natural transformation.
\be\label{e 5x3 diagram}
\xymatrix@C=1.5cm{0 \ar[d] & 0 \ar[d] & 0 \ar[d]\\
\tsymred A\Gamma^-_{s_1} \ar[r]\ar[d] & \H\tsr_{J'_{n-2}} A\Gamma^-_{s_1} \ar[d]\ar[r]_<<<<<<<{\tilde{\counit}(A\Gamma^-_{s_1})} & \H\tsr_{S\backslash s_2} A\Gamma^-_{s_1} \ar[d]\\
\tsymred A\Gamma \ar[r]\ar[d] & \H\tsr_{J'_{n-2}} A\Gamma \ar[d]\ar[r]_<<<<<<<{\tilde{\counit}(A\Gamma)}\ar[d] & \H\tsr_{S\backslash s_2} A\Gamma \ar[d]\\
\tsymred A\Gamma^+_{s_1} \ar[r]\ar[d] & \H\tsr_{J'_{n-2}} A\Gamma^+_{s_1} \ar[d]\ar[r]_<<<<<<<{\tilde{\counit}(A\Gamma^+_{s_1})} \ar[d]& \H\tsr_{S\backslash s_2} A\Gamma^+_{s_1} \ar[d]\\
0 & 0 & 0
}\ee

\newcommand{\Ltwo}{\ensuremath{\L'}}

\begin{lemma}
\label{l L to L*}
Given $w \in W^{J'_{n-2}}$, $\gamma \in \Gamma$,  suppose that either $s_1 \notin R(w)$ or $s_1 \notin L(\gamma)$.  Then $\tilde{\counit}(A\Gamma)(\tB_{w,\gamma})$, $\tB_{w, \gamma} \in \H \tsr_{J'_{n-2}} A \Gamma$
lies in the lattice $\Ltwo := A^- \H \tsr_{S \backslash s_2} \Gamma$.
\end{lemma}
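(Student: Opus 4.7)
The plan is to route $\tilde{\counit}(A\Gamma)$ through the intermediate $W_{S \backslash s_2}$-graph $\Lambda$ of this section and write $\tF^2 = \H \tsr_{S \backslash s_2} A\Lambda$, exploiting Proposition \ref{p nested parabolic}. First I would factor $w = w' \cdot v$ uniquely with $w' \in W^{S \backslash s_2}$ and $v \in W_{S \backslash s_2}^{J'_{n-2}} = \{1, s_1\}$; from (\ref{e aklcoset}) one reads off that $v = 1$ if and only if $s_1 \notin R(w)$. Proposition \ref{p nested parabolic} then identifies $\tB_{w, \gamma}$ with the canonical basis element $\tB_{w', \tB_{v, \gamma}}$ built from the two nested inductions, where the canonical basis of $A\Lambda$ is simply $\tB_{1, \gamma} = 1 \bt \gamma$ and $\tB_{s_1, \gamma} = \C_{s_1} \bt \gamma$. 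Since both standard bases on $\tF^2$ span the same $A^-$-lattice $\L$, the defining congruence reads $\tB_{w, \gamma} \equiv T_{w'} \bt \tB_{v, \gamma} \mod \ui \L$.

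Next, apply $\tilde{\counit} = \H \tsr_{S \backslash s_2} \counit$. The crucial computation on the two types of vertices of $\Lambda$ is $\counit(\tB_{1, \gamma}) = \gamma$ and $\counit(\tB_{s_1, \gamma}) = \C_{s_1} \gamma$; the $W$-graph rule (\ref{Wgrapheq}) evaluates the latter to $[2] \gamma$ when $s_1 \in L(\gamma)$ and to an integer combination of elements of $\Gamma^-_{s_1}$ otherwise. Consequently the leading term $T_{w'} \bt \counit(\tB_{v, \gamma})$ already sits in $\Ltwo$ in the two configurations permitted by the hypothesis (either $v = 1$, which gives $T_{w'} \bt \gamma$, or $v = s_1$ with $s_1 \notin L(\gamma)$, which gives an integer combination of standard basis elements), whereas the excluded configuration $v = s_1$ with $s_1 \in L(\gamma)$ would yield $[2] T_{w'} \bt \gamma$, which carries a positive power of $\u$ and escapes $\Ltwo$. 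This is exactly why the hypothesis is stated as a disjunction.

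The last step is to verify that the error $\ui \tilde{\counit}(\L)$ is also contained in $\Ltwo$. Sweeping $\tilde{\counit}$ over the standard basis $\{T_x \bt \tB_{v, \gamma'}\}$ of $\L$, the only images that fail to lie in $\Ltwo$ are $\tilde{\counit}(T_x \bt \tB_{s_1, \gamma'}) = [2] T_x \bt \gamma'$ with $s_1 \in L(\gamma')$, and the elementary identity $\ui [2] = 1 + \ui^2 \in A^-$ drags these inside $\Ltwo$ once multiplied by $\ui$. This is the main delicacy of the argument: the positive $\u$-contribution that spoils the excluded leading-term case resurfaces in the error lattice, but the extra factor of $\ui$ exactly tames it. Combining this with the previous paragraph yields $\tilde{\counit}(\tB_{w, \gamma}) \in \Ltwo$.
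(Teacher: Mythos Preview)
Your argument is correct and follows essentially the same route as the paper's proof: factor $w = w'v$ with $v \in \{1, s_1\}$ via Proposition~\ref{p nested parabolic}, compute $\tilde{\counit}$ on the leading term $T_{w'} \bt \tB_{v,\gamma}$ case by case, and then bound the $\ui\L$ error. The only cosmetic difference is that the paper packages the error estimate as $\tilde{\counit}(\L) \subseteq \u\Ltwo$ (whence $\ui\L \mapsto \Ltwo$), whereas you make the same point by the explicit identity $\ui[2] = 1 + \ui^2 \in A^-$.
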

\begin{proof}
First note that the standard basis for $\H \tsr_{J'_{n-2}} A \Gamma$ coming from realizing   $\H \tsr_{J'_{n-2}} A \Gamma$ as $\H \tsr_{S \backslash s_2} \H_{S \backslash s_2} \tsr_{J'_{n-2}} A \Gamma$ satisfies
\be
\label{e TvC mapsto}
\begin{array}{cccclr}
\T_{v, \tB_{1,\gamma}}& = & T_v \bt_{S \backslash s_2} 1 \bt_{J'_{n-2}} \gamma & \stackrel{\tilde{\counit}(A\Gamma)}{\longmapsto} &
T_v \bt_{S \backslash s_2} \gamma, \text{ and} \\
\T_{v, \tB_{s_1,\gamma}} &=&T_v \bt_{S \backslash s_2} \C_{s_1} \bt_{J'_{n-2}} \gamma &
\stackrel{\tilde{\counit}(A\Gamma)}{\longmapsto}&
\left\{
\begin{array}{lc}
  [2] T_v \bt_{S \backslash s_2} \gamma & \text{if } s_1 \in \Gamma,\\
 \sum\limits_{\{\delta:s_1 \in L(\delta)\}} \mu(\delta, \gamma )T_v \bt_{S \backslash s_2} \delta   & \text{if } s_1 \notin \Gamma,
\end{array}
\right.

\end{array}
\ee
for $v \in W^{S \backslash s_2}$.
 Then since the elements $T_v \bt_{S \backslash s_2} \gamma$ are a standard basis for  $ \H\tsr_{S \backslash s_2}A\Gamma$, the lattice $\L = A^- \H \tsr_{J'_{n-2}} \Gamma$ is sent to $\u \Ltwo$ by $\tilde{\counit}(A\Gamma)$. Now for $w \in W^{{J'_{n-2}}}$, $s_1 \notin R(w)$ implies $w \in W^{S \backslash s_2}$.  In this case,
\be \tB_{w, \gamma} \in \T_{w, \tB_{1,\gamma}} + \ui \L \xrightarrow{\tilde{\counit}(A\Gamma)} T_w \bt_{S \backslash s_2} \gamma + \Ltwo = \Ltwo.\ee
 On the other hand if $s_1 \in R(w)$, then $w = v s_1$ for $v \in W^{S \backslash s_2}$, and in this case we are assuming $s_1 \notin L(\gamma)$. Hence
\be \label{e Cvs1 mapsto}
\tB_{v s_1, \gamma} \in \T_{v, \tB_{s_1,\gamma}} + \ui \L \xrightarrow{\tilde{\counit}(A\Gamma)} T_v \bt_{S \backslash s_2} A^- \Gamma + \Ltwo = \Ltwo. \ee

\end{proof}

For the remainder of the subsection set $\L^* = A^- \H \tsr_{S \backslash s_2} \Gamma^-_{s_1}$.

\begin{theorem}\label{t bigdiagram}
The arrows in (\ref{e 5x3 diagram}) are compatible with the $W$-graph structures in the following sense.
\begin{list}{(\roman{ctr})}{\usecounter{ctr} \setlength{\itemsep}{1pt} \setlength{\topsep}{2pt}}
\item Vertical arrows take canonical basis elements to canonical basis elements or to 0.
\item The top non-zero row, on canonical basis elements, satisfies
\be\label{e mtrx1}\xymatrix@R=.16cm@C=1cm{
\tB_{w,\C_{s_1, \gamma}} \ar@{|->}[r] & \tB_{ws_1,\gamma} \ar@{|->}[r]_<<<<<<{\tilde{\counit}} & [2]\tB_{w, \gamma}, \text{   and} & (w \in W^{S \backslash s_2}) \\
\ & \tB_{w,\gamma} \ar@{|->}[r]_<<<<<<<{\tilde{\counit}} &0\mod \L^* &\ }\ee
\item The bottom non-zero row, on canonical basis elements, satisfies
\be\label{e mtrx2}
\xymatrix@R=.16cm@C=1.4cm{
\tB_{w,\C_{s_1, \gamma}} \ar@{|->}[r] & \tB_{ws_1,\gamma} \ar@{|->}[r]_<<<<<<{\tilde{\counit}} & 0, \text{ and} & **[l] (w \in W^{S \backslash s_2}) \\
& \tB_{w,\gamma} \ar@{|->}[r]_<<<<<<<{\tilde{\counit}} & \tB_{w,\gamma} &}
\ee
\end{list}
\end{theorem}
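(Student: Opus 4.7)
My plan for (i) is to observe that the three columns of (\ref{e 5x3 diagram}) are obtained by applying the functors $\tsymred(-)$, $\H\tsr_{J'_{n-2}}(-)$, and $\H\tsr_{S\backslash s_2}(-)$ to the exact sequence $0 \to A\Gamma^-_{s_1} \to \Res_{S\backslash s_2}A\Gamma \to A\Gamma^+_{s_1} \to 0$ of $\H_{S\backslash s_2}$-modules (cellular by Lemma \ref{l s1 descent submodule}). Proposition \ref{CBSubquotientprop} handles the right two columns, and Proposition \ref{p cbsubquotientprop tsymred}---whose proof extends $\tsymred$ to a functor on $\H_{S\backslash s_2}$-$\Mod$ that still respects cellular subquotients---handles the left column.

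For (ii) and (iii), the central step will be to describe $\tsymred A\Gamma^\pm_{s_1}$ explicitly. Setting $\Lambda^\pm := \H_{S\backslash s_2}\tsr_{J'_{n-2}}A\Gamma^\pm_{s_1}$, Theorem \ref{t HY} gives the descent sets of the induced canonical basis vectors as $L(\tB_{1,\gamma}) = L(\gamma)\cap J'_{n-2}$ and $L(\tB_{s_1,\gamma}) = \{s_1\}\cup(L(\gamma)\cap J'_{n-2})$; crucially $s_1 \notin J'_{n-2}$, so by Lemma \ref{l s1 descent submodule} we have $\zeta(A\Lambda^\pm) = A\{\tB_{s_1,\gamma}: \gamma \in \Gamma^\pm_{s_1}\}$ in both cases. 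Proposition \ref{p nested parabolic} then realizes $\tsymred A\Gamma^\pm_{s_1} = \H\tsr_{S\backslash s_2}A\{\tB_{s_1,\gamma}\}$ as a cellular submodule of $\H\tsr_{J'_{n-2}}A\Gamma^\pm_{s_1}$ with canonical basis $\{\tB_{ws_1,\gamma}: w \in W^{S\backslash s_2}\}$, under the identification $\tB_{w,\C_{s_1,\gamma}} \leftrightarrow \tB_{ws_1,\gamma}$. This establishes the first horizontal arrows in (\ref{e mtrx1}) and (\ref{e mtrx2}).

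The $\tilde{\counit}$ computations will hinge on the formula $\counit(\tB_{s_1,\gamma}) = \C_{s_1}\gamma$, which equals $[2]\gamma$ for $\gamma \in \Gamma^-_{s_1}$ but vanishes in the quotient $A\Gamma^+_{s_1}$. Combined with (\ref{e TvC mapsto}), this immediately gives $\tilde{\counit}(\tB_{ws_1,\gamma}) = [2]\tB_{w,\gamma}$ in the top row and $\tilde{\counit}(\tB_{ws_1,\gamma}) = 0$ in the bottom row. For $\tilde{\counit}(\tB_{w,\gamma})$ with $w \in W^{S\backslash s_2}$, the top-row statement $\tilde{\counit}(\tB_{w,\gamma}) \equiv 0 \mod \L^*$ is Lemma \ref{l L to L*} applied via the inclusion $A\Gamma^-_{s_1} \hookrightarrow A\Gamma$ (using $s_1 \notin R(w)$). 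For the bottom row, $\counit(\tB_{s_1,\gamma}) = 0$ implies $\tilde{\counit}$ sends a representative of $\tB_{w,\tB_{1,\gamma}}$ to an element congruent to $T_w \bt \gamma$ modulo $\ui(A^-\H\tsr_{S\backslash s_2}\Gamma^+_{s_1})$; combined with the $\br{\cdot}$-invariance of $\tilde{\counit}(\tB_{w,\tB_{1,\gamma}})$ and uniqueness of the IC basis, this forces $\tilde{\counit}(\tB_{w,\gamma}) = \tB_{w,\gamma}$ exactly.

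The main obstacle is careful bookkeeping---tracking canonical bases back and forth via Proposition \ref{p nested parabolic}, and explaining why the $\tB_{w,\gamma}$ piece behaves so differently in the two rows. The asymmetry ultimately traces to $\C_{s_1}\gamma$ being a nontrivial element of $A\Gamma^-_{s_1}$ but zero in the quotient $A\Gamma^+_{s_1}$.
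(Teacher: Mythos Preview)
Your plan is correct and follows essentially the same route as the paper. The only difference is one of emphasis: the paper packages the key step for (ii) and (iii) as the observation that $\counit$ induces a $W_{S\backslash s_2}$-graph isomorphism $A\Lambda^-_{s_1}\xrightarrow{\sim}\Res_{S\backslash s_2}A\Gamma^-_{s_1}$ (up to the factor $[2]$) and $A\Lambda^+_{s_1}\xrightarrow{\sim}\Res_{S\backslash s_2}A\Gamma^+_{s_1}$, then tensors with $\H$; you instead compute $\tilde\counit$ directly on the standard basis via (\ref{e TvC mapsto}) and invoke IC-basis uniqueness. These are the same argument, and your version has the virtue of making the $\br{\cdot}$-invariance/uniqueness step explicit---just be sure to spell it out for the top-row identity $\tilde\counit(\tB_{ws_1,\gamma})=[2]\tB_{w,\gamma}$ as you do for the bottom row, since (\ref{e TvC mapsto}) alone only gives the image on standard basis elements.
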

\begin{proof}
Statement (i) follows from Proposition \ref{CBSubquotientprop} and Proposition \ref{p cbsubquotientprop tsymred}.

The horizontal arrows on the left side of (\ref{e 5x3 diagram}) are understood from Theorem \ref{t twisted S2}; each is the inclusion of a cellular submodule.

To see (ii), first observe that $\Res_{S \backslash s_2} \Gamma^-_{s_1}$ and $\Lambda^-_{s_1} \subseteq \H_{S \backslash s_2} \tsr_{J'_{n-2}} A\Gamma^-_{s_1}$ (as in Theorem \ref{t twisted S2}) are isomorphic as $W_{S \backslash s_2}$-graphs.   This is clear from the remarks preceding Theorem \ref{t twisted S2} and from (\ref{e mudef}).  An isomorphism, up to a global constant, between these two objects is given by
\be A\Lambda^-_{s_1} \xrightarrow{\counit(A\Gamma^-_{s_1})}A\Gamma^-_{s_1},\ \tB_{s_1,\gamma} \mapsto [2]\gamma.\ee Therefore, tensoring $\counit(A\Gamma^-_{s_1})$ with $\H$ and applying the construction of Theorem \ref{t HY canbas exists} yields a map taking each canonical basis element to [2] times a canonical basis element. This map is the composite of the maps in the top non-zero row of (\ref{e 5x3 diagram}).

The second line of (\ref{e mtrx1}) follows from Lemma \ref{l L to L*}.

The proof of (iii) is similar to that of (ii). The $W_{S \backslash s_2}$-graphs $\Res_{S \backslash s_2} \Gamma^+_{s_1}$ and $\Lambda^+_{s_1} \subseteq \H_{S \backslash s_2} \tsr_{J'_{n-2}} A\Gamma^+_{s_1}$ are isomorphic via
\be A\Lambda^+_{s_1} \xrightarrow{\counit(A\Gamma^+_{s_1})}A\Gamma^+_{s_1},\ \tB_{1,\gamma} \mapsto \gamma. \ee
Tensoring with $\H$ yields a map taking canonical basis elements to canonical basis elements, and this map is the bottom right horizontal map of (\ref{e 5x3 diagram}).

To see the first line of (\ref{e mtrx2}), first observe that $\tB_{s_1,\gamma} = \C_{s_1} \bt \gamma
\stackrel{\counit(A\Gamma^+_{s_1})}{\longmapsto}
\C_{s_1}\gamma = 0$, with the equality by definition of the quotient $A\Gamma^+_{s_1}$. Then use the fact that any $\tB_{w,\tB_{s_1,\gamma}}$ is in $A \{ T_x \bt \tB_{s_1,\gamma} : x \in W^{S \backslash s_2}, \gamma \in \Gamma^+_{s_1} \}$ (see Theorem \ref{t twisted S2} and the preceding discussion).
\end{proof}

\begin{theorem}
\label{t main theorem}
The map $\tilde{\counit}(A\Gamma)$ (the middle right horizontal map of (\ref{e 5x3 diagram})), on canonical basis elements, satisfies

\be
\label{e tilde counit map}
\begin{array}{rclc}
\tB_{ws_1,\gamma} & \longmapsto & [2]\tB_{w,\gamma} & \text{if } s_1 \in L(\gamma), \\
\tB_{w,\gamma} & \longmapsto & 0 \mod \L^* & \text{if } s_1 \in L(\gamma), \\
\tB_{ws_1,\gamma} & \longmapsto & 0 \mod \L^* & \text{if } s_1 \notin L(\gamma), \\
\tB_{w,\gamma} & \longmapsto & \tB_{w,\gamma} \mod \L^* & \text{if } s_1 \notin L(\gamma), \\
\end{array}
\ee
where $w$ is any element of $W^{S \backslash s_2}$ (and $\L^* = A^- \H \tsr_{S \backslash s_2} \Gamma^-_{s_1}$).
\end{theorem}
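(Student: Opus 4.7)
My plan is to chase the commutative diagram (\ref{e 5x3 diagram}), using Theorem \ref{t bigdiagram} together with Lemma \ref{l L to L*}. The four assertions in (\ref{e tilde counit map}) split into two pairs according to whether $\gamma\in\Gamma^-_{s_1}$ or $\gamma\in\Gamma^+_{s_1}$; these will be handled via the top and bottom rows of (\ref{e 5x3 diagram}) respectively.

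First suppose $s_1\in L(\gamma)$, i.e.\ $\gamma\in\Gamma^-_{s_1}$. By Proposition \ref{CBSubquotientprop} applied to the middle column of (\ref{e 5x3 diagram}), $\tB_{w,\gamma}$ and $\tB_{ws_1,\gamma}$ viewed in $\H\tsr_{J'_{n-2}} A\Gamma$ are the images of like-named canonical basis elements of the top submodule $\H\tsr_{J'_{n-2}} A\Gamma^-_{s_1}$. Commutativity of the upper right square, combined with the fact that the top-right vertical inclusion sends canonical basis elements to canonical basis elements and hence the top-row $\L^*$ into the middle-row $\L^*$, reduces the first two lines of (\ref{e tilde counit map}) to Theorem \ref{t bigdiagram}(ii).

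Now suppose $s_1\notin L(\gamma)$. The right-hand vertical projection $\H\tsr_{S\backslash s_2} A\Gamma \twoheadrightarrow \H\tsr_{S\backslash s_2} A\Gamma^+_{s_1}$ has kernel $\H\tsr_{S\backslash s_2} A\Gamma^-_{s_1}$ and carries canonical basis elements to canonical basis elements (Proposition \ref{CBSubquotientprop}). Commutativity of the lower right square together with Theorem \ref{t bigdiagram}(iii) then yields
\be \tilde{\counit}(A\Gamma)(\tB_{ws_1,\gamma}) \in \H\tsr_{S\backslash s_2} A\Gamma^-_{s_1} \quad\text{and}\quad \tilde{\counit}(A\Gamma)(\tB_{w,\gamma}) \in \tB_{w,\gamma} + \H\tsr_{S\backslash s_2} A\Gamma^-_{s_1}. \ee

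The remaining step, which is the only place beyond formal bookkeeping, is to refine the containment in $\H\tsr_{S\backslash s_2} A\Gamma^-_{s_1}$ to the smaller $\L^*$ by invoking Lemma \ref{l L to L*}. Since $w\in W^{S\backslash s_2}$ implies $s_1\notin R(w)$, the lemma's hypothesis is satisfied for $\tB_{w,\gamma}$ regardless of $\gamma$; for $\tB_{ws_1,\gamma}$, although $s_1\in R(ws_1)$, we are in the case $s_1\notin L(\gamma)$, so the hypothesis still applies (a brief check, using that $s_1$ commutes with every element of $J'_{n-2}$, confirms that $ws_1\in W^{J'_{n-2}}$ so that $\tB_{ws_1,\gamma}$ is a legitimate canonical basis element). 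Both images therefore lie in $\Ltwo = A^-\H\tsr_{S\backslash s_2}\Gamma$, and intersecting with $\H\tsr_{S\backslash s_2} A\Gamma^-_{s_1}$ gives $\L^*$, establishing the last two lines of (\ref{e tilde counit map}). The only non-formal input is Lemma \ref{l L to L*}, whose proof carries out the explicit computation via (\ref{e TvC mapsto})--(\ref{e Cvs1 mapsto}); the rest of the argument is essentially a diagram chase.
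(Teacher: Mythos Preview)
Your proof is correct and follows essentially the same route as the paper: the first two lines come from Theorem \ref{t bigdiagram}(ii) via the top right square of (\ref{e 5x3 diagram}), and the last two from Theorem \ref{t bigdiagram}(iii) via the bottom right square, refined by Lemma \ref{l L to L*} and the observation that $\Ltwo \cap (\H\tsr_{S\backslash s_2} A\Gamma^-_{s_1}) = \L^*$. The paper's proof is the same diagram chase, only stated a bit more tersely.
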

\begin{proof}
The first and second line of (\ref{e tilde counit map}) follow from Theorem \ref{t bigdiagram} (ii) and the top right square of  (\ref{e 5x3 diagram}), as each vertical map in this square is the inclusion of a cellular submodule.

For the third line, apply Theorem \ref{t bigdiagram} (iii) to show that $\tB_{ws_1,\gamma} \in \H \tsr_{J'_{n-2}} A\Gamma$, going down and then right, maps to $0 \in \H\tsr_{S\backslash s_2} A\Gamma^+_{s_1}$.  Therefore (going right) $\tilde{\counit}(A\Gamma)(\tB_{ws_1,\gamma}) \in \H\tsr_{S \backslash s_2} A\Gamma^-_{s_1} \subseteq \H\tsr_{S \backslash s_2} A\Gamma$.  Combining this with Lemma \ref{l L to L*} yields the desired result.  A similar argument proves the fourth line.
\end{proof}

In a way made precise by the corollary below, the sets
\be \tsymred \Gamma^-_{s_1} \cup (\H \tsr_{J'_{n-2}} \Gamma^+_{s_1} \backslash \tsymred \Gamma^+_{s_1})
\text{ and } \tsymred \Gamma^+_{s_1} \cup (\H \tsr_{J'_{n-2}} \Gamma^-_{s_1} \backslash \tsymred \Gamma^-_{s_1})\ee
are canonical bases for $S^2_\text{red} V \tsr A\Gamma$ and $\Lambda^2V \tsr A\Gamma$, respectively, as $\u \to 0$. We therefore call these subsets of $\H\tsr_{J'_{n-2}} \Gamma$ \emph{combinatorial reduced sym} and \emph{combinatorial wedge} respectively.

\begin{corollary} \label{c approx at u=1}
After adjoining $\frac{1}{[2]}$ to $A$, there exists a $\br{\cdot}$-invariant basis $\{ c_{x, \gamma}: x \in W^{J'_{n-2}}, \gamma \in \Gamma \}$ of $\H \tsr_{J'_{n-2}} A \Gamma$ so that the transition matrix to the basis $\H \tsr_{J'_{n-2}} \Gamma$ tends to the identity matrix as $\u \to 0$, and so that under the map $\tilde{\counit}(A\Gamma)$
\be
\begin{array}{cclc}
c_{ws_1,\gamma} & \longmapsto & [2] \tB_{w,\gamma} & \text{if } s_1 \in L(\gamma), \\
c_{w,\gamma} & \longmapsto & 0 & \text{if } s_1 \in L(\gamma), \\
c_{ws_1,\gamma} & \longmapsto & 0 & \text{if } s_1 \notin L(\gamma), \\
c_{w,\gamma} & \longmapsto & \tB_{w,\gamma} & \text{if } s_1 \notin L(\gamma), \\
\end{array}
\ee
where $w$ is any element of $W^{S \backslash s_2}$.
\end{corollary}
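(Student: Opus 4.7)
The plan is to construct each $c_{x,\gamma}$ by making a small, explicit correction to the canonical basis element $\tB_{x,\gamma}$ so that the ``mod $\L^*$'' statements in Theorem~\ref{t main theorem} become equalities. For the type A elements (those $\tB_{ws_1,\gamma}$ with $s_1 \in L(\gamma)$, $w \in W^{S\backslash s_2}$), the first line of Theorem~\ref{t main theorem} already gives exact equality, so we simply set $c_{ws_1,\gamma} = \tB_{ws_1,\gamma}$ and no correction is needed. For the other three types we will subtract from $\tB_{x,\gamma}$ an element of $\frac{1}{[2]}\ZZ\{\tB_{w's_1,\gamma'} : w'\in W^{S\backslash s_2}, \gamma'\in\Gamma^-_{s_1}\}$ whose image under $\tilde{\counit}(A\Gamma)$ exactly cancels the error in Theorem~\ref{t main theorem}.

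The key observation making this possible is that, since $\counit$ intertwines $\br{\cdot}$ and hence so does $\tilde{\counit}(A\Gamma) = \H\tsr_{S\backslash s_2}\counit(A\Gamma)$, the image $\tilde{\counit}(A\Gamma)(\tB_{x,\gamma})$ is $\br{\cdot}$-invariant for every canonical basis element $\tB_{x,\gamma}$. Writing $\tilde{\counit}(A\Gamma)(\tB_{x,\gamma}) = (\text{desired answer}) + \xi_{x,\gamma}$ with $\xi_{x,\gamma}\in\L^*$ as in Theorem~\ref{t main theorem}, the ``desired answer'' is also $\br{\cdot}$-invariant (it is $0$, $[2]\tB_{w,\gamma}$, or $\tB_{w,\gamma}$), and hence so is $\xi_{x,\gamma}$. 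Now $\L^*$ has $A^-$-basis $\{\tB_{w',\gamma'} : w'\in W^{S\backslash s_2},\ \gamma'\in\Gamma^-_{s_1}\}$ (these are the canonical basis elements of the cellular submodule $\H\tsr_{S\backslash s_2}A\Gamma^-_{s_1}$), and each such $\tB_{w',\gamma'}$ is $\br{\cdot}$-invariant. A $\br{\cdot}$-invariant element of $\L^*$ must therefore have coefficients in $A^-\cap A^+=\ZZ$, so we may write $\xi_{x,\gamma} = \sum_{w',\gamma'} n_{w',\gamma'}^{x,\gamma}\, \tB_{w',\gamma'}$ with integer coefficients $n_{w',\gamma'}^{x,\gamma}\in\ZZ$.

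Given this, for types B, C, D define
\be c_{x,\gamma} \ := \ \tB_{x,\gamma} \ - \ \frac{1}{[2]}\sum_{w',\gamma'} n_{w',\gamma'}^{x,\gamma}\, \tB_{w's_1,\gamma'}, \ee
where the sum is over $w'\in W^{S\backslash s_2}$, $\gamma'\in\Gamma^-_{s_1}$. Since $\tilde{\counit}(A\Gamma)$ sends $\tB_{w's_1,\gamma'}$ to $[2]\tB_{w',\gamma'}$ by the first line of Theorem~\ref{t main theorem}, the subtracted term has image exactly $\xi_{x,\gamma}$, so $\tilde{\counit}(A\Gamma)(c_{x,\gamma})$ equals the desired answer, as claimed in the statement. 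Each $c_{x,\gamma}$ is $\br{\cdot}$-invariant because $\tB_{x,\gamma}$ and each $\tB_{w's_1,\gamma'}$ are, and $\frac{1}{[2]}$ together with the integers $n_{w',\gamma'}^{x,\gamma}$ are $\br{\cdot}$-invariant in $A[\tfrac{1}{[2]}]$.

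Finally, the transition matrix from $\{c_{x,\gamma}\}$ to $\{\tB_{x,\gamma}\}$ is unitriangular with off-diagonal entries that are $\ZZ$-multiples of $\frac{1}{[2]}=\frac{\u}{1+\u^2}$; in particular the matrix is invertible (so $\{c_{x,\gamma}\}$ is indeed a basis) and tends to the identity as $\u\to 0$. The main things to verify, and what I would call the only real obstacle, are the $\br{\cdot}$-invariance argument via $\xi_{x,\gamma}\in\ZZ\{\tB_{w',\gamma'}\}$ and the bookkeeping that $W^{J'_{n-2}}$ is the disjoint union $W^{S\backslash s_2}\sqcup W^{S\backslash s_2}s_1$ (already recorded in \eqref{e aklcoset}) so that the four types (A, B, C, D) do index all basis elements uniquely.
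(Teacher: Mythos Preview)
Your proof is correct and is precisely the argument the paper leaves implicit (the paper states the corollary with no proof after Theorem~\ref{t main theorem}). The key step you identify—that the error term $\xi_{x,\gamma}$ is $\br{\cdot}$-invariant and lies in $\L^*$, hence has integer coefficients in the canonical basis $\{\tB_{w',\gamma'}:\gamma'\in\Gamma^-_{s_1}\}$—is exactly the point, and your use of the type~A elements (whose images are known exactly, not just modulo $\L^*$) to cancel these errors is the intended construction. One terminological quibble: the transition matrix is not literally unitriangular but rather block-unipotent (identity on the type~A block, identity on the B/C/D block, corrections only from the latter into the former); this does not affect invertibility or the $\u\to 0$ limit.
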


Theorem \ref{t main theorem} and Corollary \ref{c approx at u=1} also apply with $\pi^2 A\Gamma$ replacing $A\Gamma$. There is a potential pitfall here as $\pi^2 A\Gamma$ is not the restriction of an $\H$-module to $\H_{J'_{n-2}}$. However, it is an $\H_{S \backslash s_2}$-module, since $K_0 \cap \pi^2 K_0 \pi^{-2} = S \backslash s_2$, which is all that is needed to apply the results in this subsection. Also, by \textsection\ref{ss diagram commutes} the projection $\tilde{\counit}(\pi^2 E)$ specializes to the projection  $T^2_\text{red} V\tsr E \to S^2_\text{red} V \tsr E$ at $\u =1$. Thus we can write $\H \tsr_{J'_{n-2}} \pi^2 \Gamma$ as the disjoint union of
\be
\tsymred \pi^2 \Gamma^-_{s_{n-1}} \cup (\H \tsr_{J'_{n-2}} \pi^2 \Gamma^+_{s_{n-1}} \backslash \tsymred \pi^2 \Gamma^+_{s_{n-1}} ) \text{ and } \tsymred \pi^2 \Gamma^+_{s_{n-1}} \cup (\H \tsr_{J'_{n-2}} \pi^2 \Gamma^-_{s_{n-1}} \backslash \tsymred \pi^2 \Gamma^-_{s_{n-1}} ),
\ee
which will also be called combinatorial reduced sym and combinatorial wedge.

\begin{example}
In the $W$-graph in Figure \ref{f VVe+}, combinatorial reduced sym is the lower triangular region consisting of the first $i$ entries of row $i$ for $i = 1, 2, 3$; combinatorial wedge is the upper triangular region consisting of the last $4-i$ entries of row $i$ for $i = 1, 2, 3$. For general $\Gamma$, the picture would be similar: the $W$-graph could be drawn in $n$ by $n$ chunks and combinatorial reduced sym would consist of lower triangular regions for $\gamma \in \Gamma^-_{s_1}$ and upper triangular regions for $\gamma \in \Gamma^+_{s_1}$.

In the $W$-graph in Figure \ref{f VVe+ affine}, combinatorial reduced sym is the lower triangular region consisting of the first $i-1$ entries of row $i$ for $i = 2, 3, 4$; combinatorial wedge is the upper triangular region consisting of the last $5-i$ entries of row $i$ for $i = 2, 3, 4$.

The labels ``sym'' and ``wedge'' below the trees mark the cells in combinatorial reduced sym and combinatorial wedge.
\end{example}

\section{Combinatorial approximation of $V \tsr V \tsr E \twoheadrightarrow S^2 V \tsr E$}
\label{s combinatorial approximation S2}

For this section, let $\Gamma$ be a cell of $\Gamma_W$ labeled by a tableau $T^0$. We will describe the results of \textsection\ref{s tensor V} and \textsection\ref{s decomposing VVE} in terms of cells and their tableau labels.

\subsection{}

For a tableau $P$, let $P_{r, c}$ be the square of $P$ in the $r$-th row and $c$-th column. Suppose that $P_{r_1, c_1}, \ldots, P_{r_l,c_l}$ are squares of $P$ such that $P_{r_i, c_i}$ is an outer corner of $P^{i-1} := P \backslash \{ P_{r_1, c_1}, \ldots, P_{r_{i-1}, c_{i-1}} \}$. Then referring to the sequence of tableau $P, P^1, \ldots, P^l$, we say that $P_{r_1, c_1}, \ldots, P_{r_l, c_l}$ are \emph{removed from $P$ as a horizontal strip} (resp. \emph{removed from $P$ as a vertical strip}) if $c_1 > c_2 > \dots > c_l$ (resp. $r_1 > r_2 > \dots > r_l$). Equivalently, if $P^*$ is the skew tableau of squares $\{ P_{r_1, c_1}, \ldots, P_{r_l,c_l} \}$ with $l+1-i$ in $P_{r_i,c_i}$, then $\jdt (P^*)$ is a single row (resp. column). Similarly, referring to the sequence of tableau $P^l, \ldots, P^1, P$, we say that $P_{r_l,c_l}, \ldots, P_{r_1,c_1}$ are \emph{added to $P^l$ as a horizontal strip} (resp. \emph{added to $P^l$ as a vertical strip}) if $c_1 > c_2 > \dots > c_l$ (resp. $r_1 > r_2 > \dots > r_l$).

Recall the local rules for the RSK growth diagram (see, e.g., \cite[7.13]{St}). Letting $\lambda, \mu, \nu$ be partitions with $\mu \subseteq \lambda, \nu$, we notate these local rules by
\be
\begin{array}{ll}
\G(0; \lambda, \mu, \nu) &= \left\{ \begin{array}{ll}
  \lambda & \text{if } \lambda = \mu = \nu, \\
  (\lambda_1, \ldots, \lambda_i, \lambda_{i+1} + 1, \lambda_{i+2}, \ldots) & \text{if } \lambda = \nu = (\mu_1, \ldots, \mu_i + 1, \mu_{i+1}, \ldots), \\
  \lambda \cup \nu & \text{if }  \lambda \neq \nu,
\end{array}
\right. \\
\G(1; \lambda, \mu, \nu) &= (\lambda_1 + 1, \lambda_2, \ldots) \quad \quad\quad\quad\quad\quad\quad\quad \text{ if } \lambda = \mu = \nu.
\end{array}
\ee
Here $\lambda \cup \nu$ denotes the partition whose $i$-th part is $\max(\lambda_i,\nu_i)$.

Let $a\rightarrow P$ (resp. $a \leftarrow P$) denote the column (resp. row) insertion of $a$ into $P$. For the next theorem we will use freely the descriptions of cells given in \textsection\ref{s tableau combinatorics}. The shorthand $P_>$ will be used for $P_{>c} = \jdt (P^*)$, where $P^*$ is the skew subtableau of $P$ with entries $> c$ and $c$ is the smallest entry of $P$.  In what follows we will use the somewhat redundant local sequences for cells that come from writing $\tE^1, \tE^2, \tF^2, \tF^2$ as $\H \tsr_J \H_J \tsr_J A \Gamma$, $\H \tsr_J \H_J \tsr_J \H \tsr_J \H_J \tsr_J A \Gamma$, $\H \tsr_J \H_J \tsr_{J'_{n-2}} \H_{J'_{n-2}} \tsr_{J'_{n-2}} \H_J \tsr_J A \Gamma$, $\H \tsr_{S \backslash s_2} \H_{S \backslash s_2} \tsr_{J'_{n-2}} \H_{J'_{n-2}} \tsr_{J'_{n-2}} \H_{S \backslash s_2} \tsr_{S \backslash s_2} A \Gamma$ respectively; these last two will be referred to as $\tF^2_J$ and $\tF^2_{S \backslash s_2}$ respectively.

\begin{theorem}\
\label{t combinatorial sym}
\begin{list}{(\roman{ctr})}{\usecounter{ctr} \setlength{\itemsep}{1pt}
\setlength{\topsep}{2pt}
\setlength{\leftmargin}{0pt}
}
\item The map $\H \tsr_J \unit : \tE^1 \to \tE^2$ of (\ref{e mackey2}) is given on cells
by $(T^1, T^1_>, T^0) \mapsto (T^1, T^1_>, P, T^1_>, T^0)$, where $P = 1
\to T^1_>$. In particular, \[\sh(P) = \G(1; \sh(T^1_>), \sh(T^1_>), \sh(T^1_>)).\]
\item The inverse of the map $\tE^2 \to \tF^2_J$ of (\ref{e mackey2}) is given
on cells by $(T^2, T^2_>,T^2_{>2}, T^1, T^0) \mapsto (T^2, T^2_>, P, T^1, T^0)$, where
$$\sh(P) = \G(0; \sh(T^2_>), \sh(T^2_{>2}), \sh(T^1));$$
$P$ is determined by its shape and $P_> = T^1$.
\item The isomorphism of $W$-graphs $\tF^2_J \to \tF^2_{S \backslash s_2}$ of Proposition \ref{p nested
parabolic} is given on cells by $(T^2, T^2_>, T^2_{>2}, T^1, T^0)
\mapsto (T^2, (P^2, T^2_{>2}), T^2_{>2}, (P^1, T^2_{>2}), T^0)$, where
$P^1$ is the tableau ${\tiny\young(12)}$ (resp. ${\tiny\young(1,2)}$) if
$T^0 \backslash T^1, T^1 \backslash T^2_{>2}$ are removed from $T^0$
as a horizontal strip (resp. vertical strip), and $P^2$ is the tableau
{\tiny\young(12)} (resp. {\tiny\young(1,2)}) if $T^2_> \backslash
T^2_{>2}, T^2 \backslash T^2_>$ are added to $T^2_{>2}$ as a horizontal
strip (resp. vertical strip).
\item The cells of $\tF^2_{S \backslash s_2}$ in combinatorial reduced sym
(resp. combinatorial wedge) are those with local sequences
$(T^2, (P^2, T^2_{>2}), T^2_{>2}, (P^1, T^2_{>2}), T^0)$ such that $P^2$
and $P^1$ have the same shape (resp. different shape).
\end{list}
\end{theorem}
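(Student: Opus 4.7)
The plan is to verify each of the four parts by tracking canonical basis elements and their cell labels through the relevant map, using the Littlewood--Richardson combinatorics of induction (Proposition \ref{p cells of induce}) and restriction (\textsection\ref{ss restrict cells}).

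For (i), the map $\H \tsr_J \unit$ sends $\tB_{w,\gamma}$ to $\tB_{w, \tB_{1,\gamma}}$ since $\unit$ takes canonical basis to canonical basis by Proposition \ref{p res ind}. The outer $W$-label $T^1$ and the inner labels $T^1_>, T^0$ are unchanged, as they reflect only the unchanged top factor $T_w$ and the unchanged $\gamma$. The new middle label $P$ labels the cell of $\tB_{1,\gamma}$ in $\H\tsr_J A\Gamma$. By the construction of Theorem \ref{t HY canbas exists}, the induced $W$-graph depends only on the $W_J$-graph isomorphism type of $\Gamma$, so we may reduce to $\Gamma=\Gamma_{W_J}$; then $\tB_{1, \C_x}\mapsto\C_x$ under the embedding $\H\tsr_J \Gamma_{W_J}\hookrightarrow\H$, so the cell is labeled by $P(x)$. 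Since $x\in W_J$ fixes $1$, its word begins with $1$, and by the standard RSK identity that prepending an entry smaller than the rest of the word equals column insertion, $P(x)=1\to T^1_>$.

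For (ii), by Proposition \ref{p red notred} the Mackey map $\tE^2\to \tF^2_J$ is a canonical-basis-preserving surjection with kernel $\im(\H\tsr_J\unit)$, so its inverse on the image preserves canonical bases. The outermost two labels $(T^2, T^2_>)$ and the innermost two $(T^1, T^0)$ are preserved by the inverse, since the corresponding outer and inner factorizations agree in both presentations. At the middle position the $W_{J'_{n-2}}$-tableau $T^2_{>2}$ of $\tF^2_J$ is replaced by a $W$-tableau $P$. The identity $P_> = T^1$ holds because removing the $1$ from $P$ by jeu de taquin must reproduce the $W_J$-cell label at level 1, which is $T^1$ in both presentations. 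The shape rule $\sh(P)=\G(0; \sh(T^2_>), \sh(T^2_{>2}), \sh(T^1))$ is the standard RSK growth-diagram local rule governing how cell-label shapes evolve across the $2\times 2$ square of adjacent restriction/induction levels; this is the shape-level content of Proposition \ref{p cells of induce}. Since a straight-shape tableau is determined by its shape together with the rectification of its entries $> 1$, $P$ is then fully determined.

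For (iii), by Proposition \ref{p nested parabolic} the two $W$-graph structures on $\H\tsr_{J'_{n-2}} A\Gamma$ coincide, so the identification $\tF^2_J\leftrightarrow \tF^2_{S\backslash s_2}$ takes canonical basis to canonical basis, preserving the labels $(T^2, T^2_{>2}, T^0)$ coming from factors common to both decompositions. The intermediate $W_J$-labels become $W_{S\backslash s_2}$-labels; since $W_{S\backslash s_2}=\S_{\{1,2\}}\times\S_{\{3,\ldots,n\}}$, such a label is a pair consisting of a two-box tableau on $\{1,2\}$ and a tableau on $\{3,\ldots,n\}$, and compatibility with the adjacent $W_{J'_{n-2}}$-label forces the second component to equal $T^2_{>2}$. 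Applying the Littlewood--Richardson restriction combinatorics of \textsection\ref{ss restrict cells} with $r=2$ then identifies the two-box component as ${\tiny\young(12)}$ or ${\tiny\young(1,2)}$ according to whether the two boxes added to $T^2_{>2}$ form a horizontal or vertical strip, giving the stated rule for $P^1$ (at the bottom side) and $P^2$ (at the top side).

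For (iv), Lemma \ref{l s1 descent submodule} shows $\tsymred A\Gamma$ is the cellular submodule of $\tF^2_{S\backslash s_2}$ whose cells have $s_1$ in their level-4 descent set, which by the $W_{S\backslash s_2}$-graph descent formula is equivalent to $P^2={\tiny\young(1,2)}$. Analogously, $\gamma\in\Gamma^-_{s_1}$ iff its $W_{S\backslash s_2}$-cell has $s_1$-descent iff $P^1={\tiny\young(1,2)}$. Hence combinatorial reduced sym $=\tsymred A\Gamma^-_{s_1}\cup(\H\tsr_{J'_{n-2}}A\Gamma^+_{s_1}\setminus \tsymred A\Gamma^+_{s_1})$ consists precisely of the cells with $(P^1,P^2)\in\{({\tiny\young(1,2)},{\tiny\young(1,2)}),({\tiny\young(12)},{\tiny\young(12)})\}$, i.e., $P^1=P^2$; combinatorial wedge is the complement, $P^1\neq P^2$. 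The hardest step will be (ii), where matching level-2 labels across the two distinct filtrations of $\tE^2$ and $\tF^2_J$ and pinning down the correct growth-diagram parameter require the most careful combinatorial bookkeeping.
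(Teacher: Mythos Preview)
Your outline follows the paper's approach in spirit---tracking canonical basis elements through each map and reading off cell labels---and parts (i), (iii), (iv) are essentially correct, if terse. Part (iv) in particular matches the paper's argument exactly: the descent-set characterization of $\Lambda^-_{s_1}$ and $\Gamma^-_{s_1}$ translates directly into the shape of $P^2$ and $P^1$.

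There is, however, a real gap in (ii). You assert that the shape rule $\sh(P)=\G(0;\sh(T^2_>),\sh(T^2_{>2}),\sh(T^1))$ is ``the standard RSK growth-diagram local rule governing how cell-label shapes evolve'' and call it ``the shape-level content of Proposition~\ref{p cells of induce}.'' But Proposition~\ref{p cells of induce} only describes cells of a single induced module; it does not say that the shapes at adjacent levels of two \emph{different} filtrations (the $\tE^2$ and $\tF^2_J$ filtrations) obey the Fomin growth rule. That requires an argument, and your final sentence concedes as much. The paper supplies this argument by switching to the $J_{n-1}$ convention (\textsection\ref{ss sb}), writing the inverse of $\H\tsr_J\tau$ explicitly in stuffed notation as
\[
(z_2,\rj{z_2}{J},\lj{z_0}{J_{n-2}},\lj{z_0}{J},z_0)\longmapsto(z_2,\rj{z_2}{J},z_1,\lj{z_0}{J},z_0),
\]
with $z_1=\rj{z_2}{J}^*k$ for a specific $k$, and then computing $P=P(z_1)=(T^2_{<n})^*\leftarrow k$ and $T^1=(T^2_{<n-1})^*\leftarrow k$ by row insertion. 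Comparing these two insertions---which differ only by the presence of an $n$ in one tableau---yields the growth rule directly, including the critical case $\sh(T^2_>)=\sh(T^1)$ where the $n$ gets bumped. Without this or an equivalent computation, (ii) is an assertion, not a proof.

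A smaller point: the paper carries out (i)--(iii) in the $J_{n-1}$ convention and then transports back via the Sch\"utzenberger involution, because appending $n$ and row-inserting is cleaner than prepending $1$ and column-inserting. Your direct $J'_{n-1}$ argument for (i) works, but for (ii) and (iii) the $J_{n-1}$ convention and the citation of \cite[Lemma 7.11.2]{St} make the bookkeeping considerably easier.
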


\begin{proof}
For (i)--(iii), we will use $J = J_{n-1}$ instead of $J = J'_{n-1}$ and the comments in \textsection\ref{ss sb} to go back and forth between these conventions.

The map $\H \tsr_J
\unit$ on canonical basis elements is given in stuffed notation by
\be
(z_1, \rj{z_1}{J}, z_0) \mapsto (z_1, \rj{z_1}{J}, \rj{z_1}{J} n, \rj{z_1}{J}, z_0).
\ee
Here the $z_i$ are thought of as words so that $\rj{z_1}{J} n$ is just the word
$\rj{z_1}{J}$ with $n$ appended at the end. The map on cells is then
$(T^1, T^1_{<n}, T^0) \mapsto (T^1, T^1_{<n}, P, T^1_{<n}, T^0)$, where $P =
T^1_{<n} \leftarrow n$. Statement (i) then follows by applying the
Sch\"utzenberger involution.

For (ii), observe that the inverse of $\H \tsr_J \tau$ of (\ref{e mackey2}) is given in stuffed notation by
\be (z_2, \rj{z_2}{J},\lj{z_0}{J_{n-2}}, \lj{z_0}{J}, z_0) \mapsto (z_2, \rj{z_2}{J}, z_1, \lj{z_0}{J}, z_0),
\ee
where $z_1 = \rj{z_2}{J}^* k$ ($\rj{z_2}{J}^*$ is obtained from $\rj{z_2}{J}$ by increasing all numbers $\geq k$ by 1)
and $k$ is such that $\lj{z_0}{J} = \lj{z_0}{J_{n-2}}^* k$ ($\lj{z_0}{J_{n-2}}^*$ is obtained from $\lj{z_0}{J_{n-2}}$ by increasing all numbers $\geq k$ by 1).  Thus if $(T^2_{<n})^* := P(\rj{z_2}{J}^*)$ and $(T^2_{<n-1})^* := P(\rj{z_0}{J_{n-2}}^*)$, then
\be \label{e main ii}
P:=P(z_1) = (T^2_{<n})^* \leftarrow k, \text{ and } T^1:=P(\lj{z_0}{J}) = (T^2_{<n-1})^* \leftarrow k.
\ee
Note that $k \neq n$ and $\lj{z_0}{J_{n-2}} = \rj{z_2}{J_{n-2}}$ imply $(T^2_{<n})^* \backslash (T^2_{<n-1})^*$  is a square containing an $n$.  The element $k$ inserts in these tableau exactly the same way, except that the final step of $(T^2_{<n})^* \leftarrow k$ may bump the $n$ down one row; this case corresponds exactly to the case $\sh((T^2_{<n})^*) = \sh(T^1)$.

Statement (iii) is really two separate statements, one for a bijection
of local sequences corresponding to $\Res_{J'_{n-2}} \Res_J A \Gamma
\cong \Res_{J'_{n-2}} \Res_{S \backslash s_2} A \Gamma$, and one for a
bijection of local sequences corresponding to $\H \tsr \H_J \tsr \ce
\cong \H \tsr \H_{S \backslash s_2} \tsr \ce \ (\ce \text{ some cell of }
\Gamma_{W_{J'_{n-2}}})$. The first bijection follows from \cite[Lemma
7.11.2]{St} (This includes the statement that if $P$ is a tableau and $j
\leq k$, then the square $(P \leftarrow j) \backslash P$ lies strictly
to the left of $((P \leftarrow j) \leftarrow k) \backslash (P \leftarrow
j)$. We also need that if $j > k$, then the square $(P \leftarrow j)
\backslash P$ lies weakly to the right of $((P \leftarrow j) \leftarrow
k) \backslash (P \leftarrow j)$, which is similar.) The second bijection
is the definition of adding as a horizontal or vertical strip in the
case that $J = J_{n-1}$.

To see (iv), observe that the local labels of the cells of $\Res_{S
\backslash s_2} \Gamma^+_{s_1}$ (resp. $\Res_{S \backslash s_2}
\Gamma^-_{s_1}$) are of the form $({\tiny\young(12)}, T^2_{>2})$ (resp.
$({\tiny\young(1,2)}, T^2_{>2})$); the local labels of the cells of
$\Lambda^+_{s_1}$ (resp. $\Lambda^-_{s_1}$) are of the form
$({\tiny\young(12)}, T^2_{>2})$ (resp. $({\tiny\young(1,2)},
T^2_{>2})$), where $\Lambda = \H_{S \backslash s_2} \tsr_{J'_{n-2}} \Gamma$.
\end{proof}

\begin{example}
Suppose $T^0 = {\tiny\young(123,46,5)}$. On the left is the local sequence of a cell of $\tE^2$ (reading from left to right, ignoring the bottom middle tableau) and the local sequence of the corresponding cell of $\tF^2_J$ (reading from left to right, ignoring the top middle tableau). The tableau are arranged this way to match an RSK growth diagram picture. Above the tableau are the coordinates of the stuffed notation for a canonical basis element in this cell.

On the right is the local sequence of the corresponding cell of $\tF^2_{S \backslash s_2}$.

\begin{pspicture}(10,3.9){
\psset{unit=1cm}

\rput(2,3){
$\tiny\begin{array}{ccccc}
362145 && 526134 && 541623 \\
& 36245 && 52634 & \\
&& 3645 &&
\end{array}$
}
\rput(2,1){
$\begin{array}{ccccc}
\tiny\young(145,26,3) & & \tiny\young(134,26,5) & & \tiny\young(123,46,5)\\
& \tiny\young(245,36) && \tiny\young(234,56) & \\
&& \tiny\young(345,6) &&
\end{array}$}

\rput(9.5,3){
$\tiny\begin{array}{lcccr}
362145 && && 541623 \\
& 21,3645 && 21,3645 & \\
&& 3645 && \\
\end{array}$}

\rput(9.5,1){
$\begin{array}{ccccc}
\tiny\young(145,26,3) & &  & & \tiny\young(123,46,5)\\
& \tiny\young(1,2),\young(345,6) && \tiny\young(1,2),\young(345,6) & \\
&& \tiny\young(345,6) &&
\end{array}$}
}
\end{pspicture}
\end{example}

\subsection{}
For the $W$-graph version of tensoring with $V$ coming from the affine Hecke algebra, we have a similar theorem.  Let
$\G'(a;\lambda, \mu, \nu) = (\G(a;\lambda',\mu',\nu'))'$, where $'$ of a partition denotes its transpose.  Let $\hE^1, \hE^2, \hF^2_J, \hF^2_{S \backslash s_2}$ be defined analogously to $\tE^1, \tE^2, \tF^2_J, \tF^2_{S \backslash s_2}$. More precisely, $\hE^1$ and $\hE^2$ will use three- and five-term local sequences as in Examples \ref{ex affine insert} and \ref{ex hE2 cell}; $\hF^2_J$ refers to $\H \tsr_J \H_J \tsr_{J'_{n-2}} \pi^2(\H_{J_{n-2}} \tsr_{J_{n-2}} \H_{J_{n-1}} \tsr_{J_{n-1}} A \Gamma)$ with a six-term local sequence as in Example \ref{ex hE2 cell}, and $\hF^2_{S \backslash s_2}$ refers to $\H \tsr_{S \backslash s_2} \H_{S \backslash s_2} \tsr_{J'_{n-2}} \pi^2(\H_{J_{n-2}} \tsr_{J_{n-2}} \H_{S \backslash s_{n-2}} \tsr_{S \backslash s_{n-2}} A \Gamma)$ also with a six-term local sequence.

\begin{theorem}\
\label{t combinatorial sym affine}
\begin{list}{(\roman{ctr})}{\usecounter{ctr} \setlength{\itemsep}{1pt}
\setlength{\topsep}{2pt}
\setlength{\leftmargin}{0pt}
}
\item The inverse of the map $\hE^2 \to \hE^1$ of (\ref{e mackey2 affine}) is given on cells by $(T^1, T^1_>, T^0) \mapsto (P^2, P^2_>, P^1, T^1_>, T^0)$, where $P^1$ is determined by $\sh(P^1) = \G'(1; \sh(T^1_>), \sh(T^1_>), \sh(T^1_>))$ and the entries in $P^2, P^2_>$ have the same relative order as those in $T^1, T^1_>$.
\item The map $\hF^2_J \to \hE^2$ of (\ref{e mackey2 affine}) is given on cells by $(T^2, T^2_>, T^2_{>2}, \pi^{-2} T^2_{>2}, T^1, T^0) \mapsto (P^2, P^2_>, P^1, P^1_>, T^0)$, where $P^1$ is determined by $\sh(P^1) = \G'(0; \sh(T^2_>), \sh(T^2_{>2}), \sh(T^1))$ and the entries of $P^2, P^2_>, P^1_>$ have the same relative order as those in $T^2, T^2_>, T^1$.
\dikte=0.2pt
\item The isomorphism of $W$-graphs $\hF^2_J \cong \hF^2_{S \backslash s_2}$ of Proposition \ref{p nested parabolic} is given on cells by \[ (T^2, T^2_>, T^2_{>2}, \pi^{-2} T^2_{>2}, T^1, T^0) \mapsto (T^2, (P^2, T^2_{>2}), T^2_{>2}, \pi^{-2} T^2_{>2}, (\pi^{-2} T^2_{>2}, P^1), T^0), \] where $P^1$ is the tableau ${\tiny\begin{Young}{n-1} & n\cr \end{Young}}$ (resp. ${\tiny\begin{Young}{n-1} \cr n\cr \end{Young}}$)
    if $T^0 \backslash T^1, T^1 \backslash \pi^{-2} T^2_{>2}$ are removed from $T^0$ as a horizontal strip (resp. vertical strip), and $P^2$ is the tableau {\tiny\young(12)} (resp. {\tiny\young(1,2)}) if $T^2_> \backslash T^2_{>2}, T^2 \backslash T^2_>$ are added to $T^2_{>2}$ as a horizontal strip (resp. vertical strip).
\item The cells of $\hF^2_{S \backslash s_2}$ in combinatorial reduced sym (resp. combinatorial wedge) are those with local sequences \[ (T^2, (P^2, T^2_{>2}), T^2_{>2}, \pi^{-2} T^2_{>2}, (\pi^{-2} T^2_{>2}, P^1), T^0) \] such that $P^2$ and $P^1$ have the same shape (resp. different shape).
\end{list}
\end{theorem}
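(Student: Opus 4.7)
The plan is to mirror the proof of Theorem \ref{t combinatorial sym} step-by-step, but replace the ordinary RSK/row-insertion calculations with the affine insertion calculus of \textsection\ref{ss affinecells}. The fundamental reason $\G$ is replaced by $\G'$ is that the description of $P(a_k\pi w)$ in terms of $P(w)$ involves a \emph{column} insertion of $k$ into the modified recording tableau (see the derivation preceding Example \ref{ex affine insert}), and column insertion converts the horizontal-strip growth local rules that governed Theorem \ref{t combinatorial sym} into the corresponding vertical-strip local rules encoded by $\G'$.

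For (i), I would start from the explicit combinatorial description of cells of $\hE^1$ given at the end of \textsection\ref{ss affinecells}: writing $w \xrightarrow{RSK} (T^0,Q)$, uninsert the outer corner $Q\setminus Q_{\leq n-1}$ from $T^0$ to get $T^1$ with a bumped-out value $c$, then reconstruct the local label of the cell of $\hE^1$ containing $\tB_{a_k\pi,w}$ by column-inserting $k$ into $Q_{\leq n-1}$ and adding the $c{-}n$ corner. Chasing one more induction step through the same recipe (this time applied to the module $\hE^1 = (\pH\tsr_\H A\Gamma)_1$) yields the five-tuple $(P^2, P^2_>, P^1, T^1_>, T^0)$. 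A direct inspection of how two successive column-insertions of the same label interact produces the shape identity $\sh(P^1) = \G'(1;\sh(T^1_>),\sh(T^1_>),\sh(T^1_>))$ and shows that $P^2$ and $P^2_>$ have the same relative order as $T^1$ and $T^1_>$.

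For (ii), I would unwind the derivation of the exact sequence (\ref{e mackey2 affine}) on standard basis elements, exactly as was done in the proof of Theorem \ref{t combinatorial sym}(ii). The inverse of the quotient map sends a stuffed six-tuple corresponding to $(T^2,T^2_>,T^2_{>2},\pi^{-2}T^2_{>2},T^1,T^0)$ to an element of $\hE^2$ whose insertion combinatorics again reduces to a column-insertion of a single letter, now related to the local Knuth move at the position controlled by the shapes $\sh(T^2_>),\sh(T^2_{>2}),\sh(T^1)$; this produces the $\G'(0;\cdot,\cdot,\cdot)$ formula. The bijection (iii) between the $\H\tsr_J\H_J$-standard basis and the $\H\tsr_{S\backslash s_2}\H_{S\backslash s_2}$-standard basis is the same statement as in the non-affine case, coming from Proposition \ref{p nested parabolic}: the two-box tableaux $P^1,P^2$ record whether the corresponding pairs of squares form a horizontal or vertical strip, and this fact is purely finite-type and transfers verbatim. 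The only adjustment is that the tableau $P^1$ has entries $\{n{-}1,n\}$ rather than $\{1,2\}$ because it sits in the $\pi^{-2}$-shifted factor coming from the right-hand side of (\ref{e mackey affine}).

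Finally, (iv) is proven exactly as Theorem \ref{t combinatorial sym}(iv): by Lemma \ref{l s1 descent submodule} and the definitions of $\tsymred$ and combinatorial wedge / reduced sym extended to the $\pi^2$-twisted setting at the end of \textsection\ref{ss S2 vs twisted S2}, the combinatorial reduced sym cells are precisely those whose local labels on the $W_{S\backslash s_2}$-graph sit in $\Lambda^-_{s_1}\cup (\Gamma^+_{s_1}\setminus\Lambda^+_{s_1})$-type pieces, which translates in local-sequence language to the condition that the shapes of $P^1$ and $P^2$ agree. The main obstacle I anticipate is purely bookkeeping: keeping straight the conversions between affine window notation, the shifted insertion tableaux involving the $c{-}n$ entry, the $\pi^{-2}$-shifted $W_{J'_{n-2}}$-graph conventions, and the entry-relabelling ``$P^1$ has entries $\{n{-}1,n\}$'' in (iii), so that each of the shape calculations genuinely matches the transpose growth rule $\G'$ rather than $\G$.
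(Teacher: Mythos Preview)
Your proposal is correct and follows essentially the same approach as the paper. The paper's proof is a single sentence: ``Similar to that of Theorem \ref{t combinatorial sym}, the main difference being for (ii): after applying the Sch\"utzenberger involution, the analogous statement to (\ref{e main ii}) is with column insertions instead of row insertions.'' You have expanded exactly this idea, correctly identifying that the affine insertion calculus of \textsection\ref{ss affinecells} performs a column insertion where the finite case performed a row insertion, which is precisely why $\G$ is replaced by $\G'$.

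One small stylistic difference: the paper routes the argument through the Sch\"utzenberger involution (as in \textsection\ref{ss sb}) to pass between the $J_{n-1}$ and $J'_{n-1}$ conventions and then observes that row insertion becomes column insertion under this involution, whereas you work more directly with the affine window notation and the explicit column-insertion description preceding Example \ref{ex affine insert}. These are two sides of the same computation, and your direct route is perfectly valid; the bookkeeping concern you flag at the end is real but not a mathematical obstruction.
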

\begin{proof}
  Similar to that of Theorem \ref{t combinatorial sym}, the main difference being for (ii): after applying the Sch\"utzenberger involution, the analogous statement to (\ref{e main ii}) is with column insertions instead of row insertions.
\end{proof}

\begin{example}
\label{ex hE2 cell}
The local sequence for a cell of $\hE^2$ (top) and the local sequence for the corresponding cell of $\hF^2_J$ (bottom).

\begin{center}
$
\begin{array}{ccccc}
\tiny\young(\mone 12,36,4) & & \tiny\young(\mone 14,26,3) & & \tiny\young(145,26,3)\\
& \tiny\young(12,36,4) && \tiny\young(14,26,3) & \\
&& &&
\end{array}$

$\begin{array}{cccccc}
\tiny\young(123,46,5) & & & & & \tiny\young(145,26,3)\\
& \tiny\young(23,46,5) &&& \tiny\young(14,25,3) & \\
&& \tiny\young(36,4,5)& \tiny\young(14,2,3)&&
\end{array}
$
\end{center}
\end{example}

\begin{corollary}
\label{c main}
Theorem \ref{t combinatorial sym} gives a partition of the cells of $\tE^2$ into three parts: the non-reduced part corresponding to the image of (i), the inverse image of combinatorial reduced sym under (ii), and the inverse image of combinatorial wedge under (ii). Similarly, Theorem \ref{t combinatorial sym affine} gives a partition of the cells of $\hE^2$ into three parts: the non-reduced part corresponding to the inverse image of (i), the image of combinatorial reduced sym under (ii), and the image of combinatorial wedge under (ii).
\end{corollary}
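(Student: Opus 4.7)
The plan is to assemble the corollary from the two preceding theorems together with Propositions~\ref{p red notred} and~\ref{p red notred affine}; no new combinatorics is needed, and the main task is to keep track of which maps are inclusions of cellular submodules and which are projections onto cellular quotients.

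First, for $\tE^2$, I would invoke Proposition~\ref{p red notred}: the short exact sequence (\ref{e mackey2}) consists of maps that send canonical basis elements to canonical basis elements or to $0$, so the canonical basis of $\tE^2$ is already partitioned into the image of $\H\tsr_J\unit$ (the non-reduced part, isomorphic as a $W$-graph to $\tE^1$) and the pre-image of the canonical basis of $\tF^2$ under the quotient map. Theorem~\ref{t combinatorial sym}(i) describes the first of these two subsets explicitly on the level of cells, so the non-reduced part is pinned down.

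Next, Theorem~\ref{t combinatorial sym}(ii) identifies the remaining cells of $\tE^2$ (those belonging to the reduced quotient $\tF^2 \cong \tF^2_J$) with cells of $\tF^2_J$, and Theorem~\ref{t combinatorial sym}(iii) transports this bijection across the $W$-graph isomorphism $\tF^2_J \cong \tF^2_{S\backslash s_2}$ supplied by Proposition~\ref{p nested parabolic}. Now by the definition of combinatorial reduced sym and combinatorial wedge at the end of \textsection\ref{ss S2 vs twisted S2}, these two sets partition the canonical basis of $\tF^2$, and Theorem~\ref{t combinatorial sym}(iv) records this partition combinatorially on $\tF^2_{S\backslash s_2}$. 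Pulling the partition back through (iii) and then through the bijection of (ii) yields the desired splitting of the reduced cells of $\tE^2$ into two parts, giving the three-part partition claimed.

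The affine case is handled in exactly the same way, with Proposition~\ref{p red notred affine} and the exact sequence (\ref{e mackey2 affine}) playing the role of Proposition~\ref{p red notred} and (\ref{e mackey2}); one must only note that the non-reduced piece is now the quotient $(\pH\tsr_\H E)_1 \cong \hE^1$ rather than a submodule, which is why Theorem~\ref{t combinatorial sym affine}(i) describes cells of $\hE^2$ via the inverse of the projection $\hE^2\to\hE^1$. Parts (ii)--(iv) of Theorem~\ref{t combinatorial sym affine} then partition the cells of $\hE^2$ lying in the reduced submodule $\hF^2$ into combinatorial reduced sym and combinatorial wedge in the same manner. There is no real obstacle: the entire proof is bookkeeping, and the only point requiring a moment's care is orienting the maps correctly in the affine case so that ``image'' and ``inverse image'' are used consistently with the direction of the sequence (\ref{e mackey2 affine}).
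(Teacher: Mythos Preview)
Your proposal is correct and matches the paper's approach: the paper states Corollary~\ref{c main} without proof, treating it as immediate from Propositions~\ref{p red notred} and~\ref{p red notred affine} together with the cell descriptions in Theorems~\ref{t combinatorial sym} and~\ref{t combinatorial sym affine}. Your write-up is precisely the bookkeeping one would supply if asked to spell it out, including the care you take in the affine case to swap ``image'' and ``inverse image'' because the non-reduced part is a quotient rather than a submodule.
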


\begin{remark}
There is an obvious bijection between the cells of $\tE^2$ and $\hE^2$ obtained by taking a local sequence ${\bold \ce}$ of a cell of $\tE^2$ to the cell of $\hE^2$ with the same sequence of shapes as those of ${\bold \ce}$. Under this bijection, the cells of any of the three parts of $\tE^2$ coming from Corollary \ref{c main} do not match the corresponding parts of $\hE^2$.
\end{remark}

\section{Future work}

There are some natural questions to ask about the inducing $W$-graphs construction that, as far as we know, remain unanswered.  One question is whether the edge weights $\mu$ of the $W_J$-graph $\Gamma$ being nonnegative implies the same for the coefficients $\tP_{x,\delta,w,\gamma}$ of (\ref{e mudef}) or for the structure constants $h_{x,\mathbf{y},\mathbf{z}}$, defined by $\C_{x} \tB_{\mathbf{y}} = \sum_{\mathbf{z}} h_{x,\mathbf{y},\mathbf{z}} \tB_{\mathbf{z}}$, $x \in W,\ \mathbf{y},\mathbf{z} \in W^J \times \Gamma$.  Our computations in the case $W = \S_n$ are consistent with these positivity conjectures, but we have not investigated the inducing $W$-graphs construction outside this case. Presumably these should be provable in the special case that $(W,S)$ is crystallographic, and $\Gamma$ is the iterated induction of Hecke algebras of crystallographic Coxeter groups, by the same methods used to show the non-negativity of the usual Kazhdan-Lusztig polynomials for such $W$.

Another question concerns the partial order on the cells of $\tE^{d-1} = \HJJH$.  For type $A$, we have stated Conjecture \ref{c dominance}.  For general type, we might hope to extend Lusztig's $a$-invariant
to the induced $W$-graph setting. In particular, each cell of $\tE^{d-1}$  is contained in a cellular subquotient isomorphic to    $\Gamma_{W_1}$ (Theorem \ref{t HJHcells}), so inherits  an $a$-invariant from this isomorphism; a natural question is whether $\mathbf{z} \leq_\Lambda \mathbf{z'}$ and $\mathbf{z}$, $\mathbf{z'}$ in different cells implies $a(\mathbf{z}) > a(\mathbf{z'})$, where $\Lambda$ is the $W_1$-graph structure on $\tE^{d-1}$.  In \cite{G2}, Geck shows a slightly weaker version of this statement in the case $\tE^{d-1} = \Res_{J_1} \H_2$, $d=2$ and $W_2$ crystallographic and bounded in the sense of \cite[1.1 (d)]{L}.  It seems likely that a similar proof will work for the general case, with all Coxeter groups crystallographic and bounded.

In the forthcoming paper \cite{B}, we look at the partial order on the cells of $\Res_\H \pH \tsr_{\H} e^+$.  It appears that there are other important invariants  besides the $a$-invariant and dominance order that put restrictions on this partial order.

We have put much effort into extending the results of \textsection\ref{s tensor V}-\textsection \ref{s combinatorial approximation S2} to higher symmetric powers of $V$ and have had only partial success. In a way, this is the subject of the forthcoming paper \cite{B}, however this focuses more on the extended affine Hecke algebra and less on iterated restriction and induction.

\section*{Acknowledgments}
This paper would not have been possible without the generous advice from and many detailed discussions with Mark Haiman.  I am also grateful to John Stembridge for pointing out references \cite{HY1}, \cite{HY2}, and to Michael Phillips, Ryo Masuda, and Kristofer Henriksson for  help typing and typesetting figures.

\end{document}